\newtheorem{thm}{Theorem}[section]
\newtheorem{thma}{Theorem}
\newtheorem{lem}[thm]{Lemma}
\newtheorem{prop}[thm]{Proposition}
\newtheorem{lema}{Lemma}
\theoremstyle{definition}
\newtheorem{defin}[thm]{Definition}
\newtheorem{rem}[thm]{Remark}
\newtheorem{exa}[thm]{Example}
\numberwithin{equation}{section}
\DeclareMathOperator*{\esssup}{ess\,sup}
\begin{document}


\baselineskip=17pt



\title[Sharp Weighted Bounds]{Sharp Weighted Bounds for One--sided and Multiple Integral Operators$^*$}

\author[V. Kokilashvili]{Vakhtang Kokilashvili}
\address{Department of Mathematical Analysis, A. Razmadze Mathematical Institute, I. Javakhishvili Tbilisi State University, 2. University Str., 0186 Tbilisi, Georgia.}
\address{International Black Sea University, 3 Agmashenebeli Ave., Tbilisi 0131, Georgia.}
\email{kokil@rmi.ge}

\author[A. Meskhi]{Alexander Meskhi$^*$}
\address{Department of Mathematical Analysis, A. Razmadze Mathematical Institute, I. Javakhishvili Tbilisi State University, 2. University Str., 0186 Tbilisi, Georgia.}
\address{Department of Mathematics,  Faculty of Informatics and Control Systems, Georgian Technical University, 77, Kostava St., Tbilisi, Georgia.}
\email{meskhi@rmi.ge}

\author[M. A. Zaighum]{Muhammad Asad Zaighum}
\address{Abdus Salam School of Mathematical Sciences, GC University, 68-B New Muslim Town, Lahore, Pakistan.}
\email{asadzaighum@gmail.com}

\thanks{These results were presented in {\em Preprints of Abdus Salam School of Mathematical Sciences, Lahore, 22 January, 2013,} Preprint No. 498, see www.sms.edu.pk/journals/preprint/pre\_498.pdf}
\date{}

\begin{abstract}
In this paper we establish sharp weighted bounds (Buckley type theorems) for one--sided maximal and fractional integral operators in terms of one--sided $A_{p}$ characteristics. Appropriate sharp bounds for strong maximal functions, multiple potentials and singular integrals are derived. 
\end{abstract}



\maketitle
\section{Introduction}

One of the main problem in Harmonic analysis is to characterize a weight $w$ for which a given integral operator is bounded in $L^p_{w}$. An important class of such weights is the well-known $A_{p}$ class. It is known that $A_p$ condition is necessary and sufficient for the boundedness of Hardy--Littlewood and singular integral operators see, e.g., (\cite{COF-RFEFF}, \cite{HUNT-MUCK}, \cite{MUCK1}). However, the sharp dependence of the corresponding $L^p_{w}$ norms in terms of $A_{p}$ characteristic of $w$ is known only for some operators. The interest in the sharp weighted norm for singular integral operators is motivated by applications in partial differential equations (see e.g \cite{AST-IWA-SAK}, \cite{PET-VOL}, \cite{PET1}, \cite{PET2}). In this paper,  new sharp weighted estimates for one--sided maximal functions, one--sided fractional integrals, strong maximal functions, singular and fractional integrals with product kernels are derived.

Let $w$ be an almost everywhere positive locally integrable function on a subset $\Omega$ of $\mathbb{R}^n$.

 We denote by $L_{w}^{p}(\Omega)$, $1<p<\infty$, the set of all measurable functions $f:\Omega \rightarrow\mathbb{R}$ for which the norm
\begin{equation*}
\|f\|_{L_{w}^{p}(\Omega)}=\bigg(\int\limits_{\Omega}|f(x)|^{p}w(x)dx\bigg)^{\frac{1}{p}}
\end{equation*}
is finite. If $w\equiv$ const, then we denote $L_{w}^{p}(\Omega)=L^{p}(\Omega)$. Suppose that $L^{p,\infty}_{w}(\Omega)$ be the weighted weak Lebesgue space defined by the quasi norm:
 $$\|f\|_{L^{p,\infty}_{w}(\Omega)}=\sup\limits_{\lambda>0}\lambda[w(\{x\in\Omega: |f(x)|>\lambda\})]^{1/p}.$$

 Let $X$ and $Y$ be two Banach spaces. Given a bounded operator $T:X\rightarrow Y$, we denote the operator norm by $\|T\|_{X\rightarrow Y}$ which is defined in the standard way i.e. $\sup\limits_{\|f\|_{X}\le1}\|Tf\|_{Y}$. If $X=Y$ we use the symbol $\|T\|_{X}$.

A non-negative locally integrable function $w$ define on $\Bbb{R}^{n}$ is said to satisfy $A_p({\Bbb{R}}^{n})$ condition ($w\in A_p({\Bbb{R}}^{n})$) for $1<p<\infty$ if
$$\|w\|_{A_{p}({\Bbb{R}^{n}})}:=\sup\limits_{Q}\bigg(\frac{1}{|Q|}\int\limits_{Q}w(x)dx\bigg)\bigg(\frac{1}{|Q|}\int\limits_{Q}w(x)^{1-p'}dx\bigg)^{p-1}<\infty,$$
where $p'=\frac{p}{p-1}$ and supremum is taken over all cubes $Q$ in $\Bbb{R}^{n}$ with sides parallel to the co-ordinate axes. We call $\|w\|_{A_p({\Bbb{R}}^n)}$ the $A_p$ characteristic of $w$.

In 1972 B. Muckhenhoupt \cite{MUCK1} showed that if $w\in A_p(\Bbb{R}^n)$, where $1<p<\infty$,  then the Hardy--Littlewood maximal operator
$$Mf(x)=\sup\limits_{x\in Q}\frac{1}{|Q|}\int\limits_{Q}|f(y)|dy$$
is bounded in $L^p_w({\Bbb{R}}^n)$. Later R. Hunt, B. Muckhenhoupt and R. L. Wheeden \cite{HUNT-MUCK} proved  that the Hilbert transform
$$\mathcal{H}f(x)=p.v\int\limits_{\Bbb{R}}\frac{f(y)}{x-y}dy, \;\;\; x\in {\Bbb{R}}, $$
is also bounded in $L^p_w({\Bbb{R}})$ if $w\in A_p({\Bbb{R}})$.

S. Buckley \cite{Buck} investigated the sharp $A_p$ bound for the operator $M$ and established the inequality
\begin{equation}\label{f1}
\|M\|_{L^{p}_w({\Bbb{R}}^n)}\leq C\|w\|_{A_p({\Bbb{R}}^n)}^{\frac{1}{p-1}},\;\;\;\; 1<p<\infty,
\end{equation}
Moreover, he showed that the power $\frac{1}{p-1}$ is best possible in the sense that we can not replace $\|w\|_{A_p}^{\frac{1}{p-1}}$ by
$\psi(\|w\|_{A_p})$ for any positive non-decreasing function $\psi$ growing slowly than $x^{\frac{1}{p-1}}$. From here it follows that for any $\lambda>0$,
$$\sup\limits_{w\in A_p}\frac{\|M\|_{L^{p}_w}}{\|w\|_{A_p}^{\frac{1}{p-1}-\lambda}}=\infty.$$
 It was also shown by S. Buckley that for $1<p<\infty$ convolution Calder\'{o}n Zygmund singular operator satisfies
 $$\|T\|_{L^p_w(\Bbb{R}^n)}\le c\|w\|_{A_p(\Bbb{R}^n)}^{\frac{p}{p-1}}$$
and the best possible exponent is at least $\max\{1,\frac{1}{p-1}\}$. S. Petermichl \cite{PET1}, \cite{PET2} proved that the estimate
$$\|S\|_{L^p_w(\Bbb{R}^n)}\leq c\|w\|_{A_p(\Bbb{R}^n)}^{\max\{1,\frac{1}{p-1}\}}$$
is sharp, where $S$ is either the Hilbert transform or one of the Riesz transforms in $\Bbb{R}^n$
$$R_jf(x)=c_{n} p.v\int\limits_{{\Bbb{R}}^{n}}\frac{x_{j}-y_j}{|x-y|^{n+1}}f(y)dy.$$
S. Petermichl obtained the results for $p=2$. The general case $p\neq2$ then follows by the sharp version of he Rubio de Francia extrapolation theorem given by O. Dragi\v{c}evi\'{c}, L. Grafakos, C. Pereyra and S. Petermichl \cite{DRAG-GRAF-PR} (see also, T. Hyt\"onen \cite{HYT} regarding the $A_2$ conjecture for Calder\'{o}n-Zygmund operators which, in fact,implies appropriate estimate for all exponents $1<p<\infty$ by applying a sharp version of Rubio de Francia's extrapolation theorem.).

In 1974 B. Muckenhoupt and R. Wheeden \cite{MuWh} found necessary and sufficient condition for the one-weight inequality; namely, they proved that the Riesz potential $I_{\alpha}$ (resp the fractional maximal operator $M_{\alpha}$) is bounded from $L^p_{w^p}({\Bbb{R}}^n)$ to $L^q_{w^q}({\Bbb{R}}^n)$, where $1<p<\infty$, $0<\alpha<n/p$, $q= \frac{np}{n-\alpha p}$ if and only if $w$ satisfies  the so called $A_{p,q}({\Bbb{R}}^n)$ condition (see the definition below).  Moreover, from their result it follows that there is a positive constant $c$ depending only on $p$ and $\alpha$ such that
\begin{equation}\label{MW}
\| T_{\alpha} \|_{L^p_{w^p} \to L^q_{w^q}} \leq c \| w\|_{A_{p,q}}^{\beta},
\end{equation}
for some positive exponent  $\beta$, where $T_{\alpha}$ is either $I_{\alpha}$ or $M_{\alpha}$, and $\| w\|_{A_{p,q}} $ is the $A_{p,q}$ characteristic of $w$. M. T. Lacey, K. Moen, C. Perez and R. H. Torres \cite{LAC-MOE-PER-TORR}  proved that the best possible value of $\beta$ in \eqref{MW} is $p'/q(1-\alpha/n)$ (resp. $(1-\alpha/n)\max \{ 1, p'/q \}$)  for $M_{\alpha}$ (resp. for $I_{\alpha}$) (see also \cite{CrMo} for this and other sharp results).

In 1986 E. Sawyer proved the following inequality for the right  maximal operator $M^+$:
\begin{eqnarray} \label{H-L-sharp+}
\| M^+ f\|_{L^p_w({\Bbb{R}})} \leq C_{p} \| w\|_{A_{p}^+({\Bbb{R}})}^{\beta}  \| f\|_{L^p_w({\Bbb{R}})}, \;\;\; f\in L^p_{w}({\Bbb{R}}),
 \end{eqnarray}
 with some positive exponent $\beta$, where $\| w\|_{A_{p}^+({\Bbb{R}})}$ is  $A_p^+$ characteristic of a weight $w$ (see the appropriate definitions below).

Later K. Andersen and E. Sawyer,  in their celebrated work \cite{AND-SWA} completely characterized  the one-weight boundedness for one--sided fractional operators. In particular, they proved that if $1<p<\infty$, $0<\alpha<1/p$, $q=\frac{p}{1-\alpha p}$, then

\begin{eqnarray} \label{sharp+}
\| w {\mathcal{N}}^{+}_{\alpha} f\|_{L^q({\Bbb{R}})} \leq C_{p,\alpha} \| w\|_{A_{p,q}^+({\Bbb{R}})}^{\beta}  \| w f\|_{L^p({\Bbb{R}})}, \;\;\; f\in L^p_{w^p}({\Bbb{R}}),
 \end{eqnarray}
for some positive $\beta$, where ${\mathcal{N}}^{+}_{\alpha}$ is either the Weyl transform ${\mathcal{W}}_{\alpha}$ or the right fractional maximal operator $M^+_{\alpha}$, and $\| w\|_{A_{p,q}^+}$ is the right $A_{p,q}^+$ characteristic of a weight $w$ (see the appropriate definitions below).

Our aim is to  obtain sharp bounds  in inequalities \eqref{H-L-sharp+}, \eqref{sharp+} (as well as  to investigate their ''left'' analogs), in particular, to establish best possible value for $\beta$. From the obtained results for one--sided potentials, for example, can be easily obtained the sharp estimates for two--sided fractional integrals $I_{\alpha}$ in the case $n=1$ established in \cite{LAC-MOE-PER-TORR}. Known results and the  derived statements are  applied  to give analogous sharp estimates for multiple operators (strong maximal functions, multiple singular integrals and potentials with product kernels). In some cases, appropriate examples of weighted bounds are given.

To explain better the point of sharp estimates for multiple operators, let us discuss, for example,  the strong Hardy--Littlewood maximal operator $M^{(s)}$ defined on  ${\Bbb{R}}^2$. Denote by $A_p^{(s)}({\Bbb{R}}^2)$ the Muckenhoupt class taken with respect to the rectangles  with sides parallel to the co-ordinate axes (see Section \ref{Strong} for the definitions). Let $\|w\|_{A_p^{(s)}({\Bbb{R}}^2)}$ be $A_p^{(s)}$ characteristic of $w$.  There arises a natural questions regarding the sharp bound in the inequality
\begin{equation}\label{Rectangle}
\|M^{(s)} \|_{L^p_w({\Bbb{R}}^2)} \leq c \|w\|_{A_p^{(s)}({\Bbb{R}}^2)}^{\beta}.
\end{equation}
We show that the following estimate is sharp
\begin{equation}\label{variable}
\|M^{(s)}\|_{L^{p}_{w}({\Bbb{R}}^2)}\le c\Bigg( \|w\|_{A_p(x_{1})} \|w\|_{A_p(x_{2})}\Bigg)^{1/(p-1)},
\end{equation}
where $\|w\|_{A_p(x_{i})}$ be the characteristic of the weight $w$ defined with respect to the $i$- th variable  uniformly to another one $i=1,2$ (see e.g., \cite{FeSt}, \cite{KOK}, \cite{GARC}, Ch. IV for the one--weight theory for multiple integral operators). Inequality \eqref{variable} together with the Lebesgue differentiation theorem implies that \eqref{Rectangle} holds for $\beta= \frac{n}{p-1}$; however, unfortunately we do not know whether it is or not sharp.

The proofs of the main results are based on the two-weight theory for one--sided fractional integrals and the ideas of \cite{Buck};  \cite{LAC-MOE-PER-TORR}; \cite{PET1}, \cite{PET2}.

Under the symbol $A\approx B$ we mean that there are positive  constants $c_1$ and $c_2$  (depending on appropriate parameters) such that $ c_1A \leq B \leq c_2 A$;  $A \ll B$ means that there is a positive constant $c$ such that $A \leq  c B$

Finally we mention that constants (often different constants in one and the same lines of inequalities) will be denoted by $c$ or $C$. The symbol $p'$ stands for  the conjugate number of $p$:  $p'=p/(p-1)$, where $1<p<\infty$.

\section{One-sided Hardy--Littlewood maximal operator}
Let $f$ be a locally integrable function on $\Bbb{R}$. Then we define one--sided Hardy--Littlewood maximal functions as:

$$M^{+}f(x)=\sup \limits_{h>0} \frac{1}{ h } \int\limits_{x}^{x+h} |f(t)|dt, \;\;\;\;  M^{-}f(x)=\sup\limits_{h>0}\frac{1}{h}\int\limits_{x-h}^h |f(t)|dt, \;\;\;\;\; x\in\Bbb{R}.$$

E. Sawyer \cite{SWA} characterized the one-weighted inequality for $M^{+}$ and $M^{-}$ under the so-called one--sided Muckenhoupt condition; namely he proved that if $1<p<\infty$, then

(i) the inequality

$$ \| M^{+} f\|_{L^p_w({\Bbb{R}})} \leq C \| f\|_{L^p_w({\Bbb{R}})}, \;\;\; f\in L^p_w({\Bbb{R}})$$
holds if and only if $w\in A_p^{+}(\Bbb{R})$, i.e.,

\begin{equation*}
\|w\|_{A_{p}^{+}(\Bbb{R})}:=\sup\limits_{x\in\Bbb{R},h>0}\bigg(\frac{1}{h}\int\limits_{x-h}^{x}w(t)dt\bigg)\bigg(\frac{1}{h}\int\limits^{x+h}_{x}
w^{1-p'}(t)dt\bigg)^{p-1}<\infty.
\end{equation*}

(ii) the inequality

$$ \| M^{-} f\|_{L^p_w({\Bbb{R}})} \leq C \| f\|_{L^p_w({\Bbb{R}})}, \;\;\; f\in L^p_w({\Bbb{R}})$$
holds if and only if $w\in A_p^{-}(\Bbb{R})$, i.e.,

\begin{equation*}
\|w\|_{A_{p}^{-}(\Bbb{R})}:= \sup\limits_{x\in\Bbb{R},h>0}\bigg(\frac{1}{h}\int\limits_{x}^{x+h}w(t)dt\bigg)\bigg(\frac{1}{h}\int\limits^{x}_{x-h}
w^{1-p'}(t)dt\bigg)^{p-1}<\infty,
\end{equation*}
\vskip+0.2cm
Our main result of this section reads as follows:

\begin{thm}\label{m8}
Let $1<p<\infty$. Then

\rm{(i)}
$$\|M^{+}\|_{L^p_w(\Bbb{R})}\leq c\|w\|_{A_{p}^{+}(\Bbb{R})}^{\frac{1}{p-1}}$$
holds and the exponent $\frac{1}{p-1}$ is best possible.

\rm{(ii)}
$$\|M^{-}\|_{L^p_w(\Bbb{R})}\leq c\|w\|_{A_{p}^{-}(\Bbb{R})}^{\frac{1}{p-1}}$$
holds and the exponent $\frac{1}{p-1}$ is best possible.
\end{thm}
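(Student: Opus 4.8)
We prove (i); part (ii) follows from it applied to the weight $w(-\,\cdot)$ and function $f(-\,\cdot)$, since the change of variables $t\mapsto-t$ interchanges $M^{+}$ with $M^{-}$ and $A_{p}^{+}(\mathbb{R})$ with $A_{p}^{-}(\mathbb{R})$, preserving characteristics. Put $\sigma=w^{1-p'}$; from $(1-p')(1-p)=1$ one gets $\sigma^{1-p}=w$ and $\sigma^{p}w=\sigma$, and a comparison of the defining suprema gives the duality identity
\begin{equation*}
\|\sigma\|_{A_{p'}^{-}(\mathbb{R})}=\|w\|_{A_{p}^{+}(\mathbb{R})}^{1/(p-1)}.
\end{equation*}
Writing $g=f\sigma$ (so that $\|g\|_{L^{p}_{w}}=\|f\|_{L^{p}_{\sigma}}$, using $\sigma^{p}w=\sigma$), the claimed bound is equivalent to the two-weight inequality
\begin{equation*}
\|M^{+}(f\sigma)\|_{L^{p}_{w}(\mathbb{R})}\le c\,\|w\|_{A_{p}^{+}(\mathbb{R})}^{1/(p-1)}\,\|f\|_{L^{p}_{\sigma}(\mathbb{R})},\qquad f\ge0 .
\end{equation*}

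To prove this I would use the ``estimate-the-testing-constant'' scheme of \cite{Buck} and \cite{LAC-MOE-PER-TORR}, adapted to the one--sided setting. By the two-weight (Sawyer-type) theory for the one--sided maximal operator, $\|M^{+}(\cdot\,\sigma)\|_{L^{p}_{\sigma}\to L^{p}_{w}}$ is comparable to the testing constant
\begin{equation*}
\mathcal T=\sup_{I}\Big(\frac{1}{\sigma(I)}\int\big(M^{+}(\sigma 1_{I})\big)^{p}w\Big)^{1/p},
\end{equation*}
the supremum over bounded intervals $I$; alternatively one may run a direct one--sided stopping--time argument yielding a sparse bound $\int(M^{+}(f\sigma))^{p}w\lesssim\sum_{k}\big(\sigma(J_{k})^{-1}\!\int_{J_{k}}f\sigma\big)^{p}w(E_{k})$ over a sparse family of ``one--sided'' intervals $J_{k}=I_{k}\cup I_{k}^{r}$, where $I^{r}$ is the dyadic interval of the same length immediately to the right of $I$ (the pair $(I,I^{r})$ being precisely the configuration seen by the $A_{p}^{+}$ functional). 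Either way, everything reduces to bounding sums and suprema of quantities of the shape $(\sigma(J)/|J|)^{p}w(J)$, for which the one--interval instance of the $A_{p}^{+}$ condition gives
\begin{equation*}
\Big(\frac{\sigma(J)}{|J|}\Big)^{p}w(J)=\Big(\frac{1}{|J|}\int_{J}\sigma\Big)^{p-1}\Big(\frac{1}{|J|}\int_{J}w\Big)\,\sigma(J)\le\|w\|_{A_{p}^{+}(\mathbb{R})}\,\sigma(J).
\end{equation*}
Some care with the one--sided geometry is needed --- $J=I\cup I^{r}$ is not dyadic and $\sigma$ may be unevenly spread between $I$ and $I^{r}$ --- but this is handled within the one--sided dyadic framework.

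The crux, and the point where the \emph{sharp} exponent is obtained, is that the last estimate applied term by term and summed over scales naively loses a divergent ``number of scales'' factor; the geometric decay across scales needed to defeat this is furnished by the quantitative \textbf{one--sided reverse H\"older inequality} for $\sigma\in A_{p'}^{-}(\mathbb{R})$ (equivalently, the $A_{\infty}$ self--improvement of $w\in A_{p}^{+}(\mathbb{R})$), which enters the summation only through the factor $\|\sigma\|_{A_{\infty}^{-}}\le\|\sigma\|_{A_{p'}^{-}}=\|w\|_{A_{p}^{+}}^{1/(p-1)}$. Combined with the factor $\|w\|_{A_{p}^{+}}$ from the displayed identity this gives $\mathcal T^{p}\lesssim\|w\|_{A_{p}^{+}}^{1+1/(p-1)}=\|w\|_{A_{p}^{+}}^{p'}$, hence $\mathcal T\lesssim\|w\|_{A_{p}^{+}}^{p'/p}=\|w\|_{A_{p}^{+}}^{1/(p-1)}$, which is (i). I expect this reverse-H\"older input, and the attendant bookkeeping of the one--sided $A_{\infty}$ constant, to be the main obstacle; obtaining \eqref{H-L-sharp+} with \emph{some} positive $\beta$ is by contrast soft.

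For optimality of the exponent I would test against the half--line power weights $w_{\alpha}(t)=t^{\alpha}$ on $(0,1)$ and $w_{\alpha}\equiv1$ elsewhere, $0<\alpha<p-1$. A short computation (the extremal intervals abut the origin) gives $\|w_{\alpha}\|_{A_{p}^{-}(\mathbb{R})}\approx(p-1-\alpha)^{-(p-1)}$ as $\alpha\to(p-1)^{-}$; taking $f_{\alpha}(t)=t^{-\beta}1_{(0,1)}(t)$ with $\beta=\tfrac{2\alpha+2-p}{p}$ (so that $f_{\alpha}\in L^{p}_{w_{\alpha}}$ with $\|f_{\alpha}\|_{L^{p}_{w_{\alpha}}}^{p}\approx(p-1-\alpha)^{-1}$, while $1-\beta\approx p-1-\alpha$), the bound $M^{-}f_{\alpha}(x)\ge(1-\beta)^{-1}x^{-\beta}$ for $x\in(0,1)$ yields
\begin{equation*}
\frac{\|M^{-}f_{\alpha}\|_{L^{p}_{w_{\alpha}}(\mathbb{R})}}{\|f_{\alpha}\|_{L^{p}_{w_{\alpha}}(\mathbb{R})}}\gtrsim\frac{1}{1-\beta}\approx\frac{1}{p-1-\alpha}\approx\|w_{\alpha}\|_{A_{p}^{-}(\mathbb{R})}^{1/(p-1)},
\end{equation*}
with constants independent of $\alpha$; reflecting $t\mapsto-t$ transfers the example to $M^{+}$ and $A_{p}^{+}$. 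Thus $1/(p-1)$ cannot be replaced by $1/(p-1)-\lambda$ for any $\lambda>0$ in either (i) or (ii), and, exactly as in the Introduction, $\sup_{w}\|M^{\pm}\|_{L^{p}_{w}}/\psi\big(\|w\|_{A_{p}^{\pm}}\big)=\infty$ for every nondecreasing $\psi$ with $\psi(x)=o\big(x^{1/(p-1)}\big)$.
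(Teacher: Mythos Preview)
Your sharpness argument is essentially the paper's: the paper takes $w(x)=|x|^{(1-\epsilon)(p-1)}$ and $f(t)=t^{\epsilon(p-1)-1}\chi_{(0,1)}(t)$ for $M^{-}$ (and the reflected pair for $M^{+}$), which after the reparametrisation $\epsilon=1-\alpha/(p-1)$ is your example up to the harmless truncation of $w$ outside $(0,1)$.

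For the upper bound, your route is genuinely different from the paper's and, as written, has a real gap. The displayed ``per-interval'' inequality
\[
\Big(\frac{\sigma(J)}{|J|}\Big)^{p}w(J)\le\|w\|_{A_{p}^{+}}\,\sigma(J)
\]
is precisely what the \emph{two--sided} $A_{p}$ condition gives; the one--sided $A_{p}^{+}$ condition controls only the \emph{split} quantity $\big(\frac{1}{h}\int_{x-h}^{x}w\big)\big(\frac{1}{h}\int_{x}^{x+h}\sigma\big)^{p-1}$, with $w$ on the left half and $\sigma$ on the right. For a genuine $A_{p}^{+}$ weight that is not $A_{p}$ (e.g.\ $w$ growing on $\mathbb{R}$), the displayed inequality fails. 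You acknowledge this, but the fix is not cosmetic: passing from the split control to what your testing/sparse scheme needs requires either a one--sided Carleson embedding adapted to pairs $(I,I^{r})$ or the one--sided $A_{\infty}$ machinery you allude to, and neither is carried out. So the plan is plausible but the key step is missing.

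The paper avoids all of this with the classical Buckley interpolation trick, transported to the one--sided setting. It uses three ingredients: (a) the weak-type bound $\|M^{+}\|_{L^{r}_{w}\to L^{r,\infty}_{w}}\le c\,\|w\|_{A_{r}^{+}}^{1/r}$ (Sawyer); (b) a quantitative one--sided openness lemma: if $w\in A_{p}^{+}$ then $w\in A_{p-\epsilon}^{+}$ with $\epsilon\approx\|w\|_{A_{p}^{+}}^{\,1-p'}$ and $\|w\|_{A_{p-\epsilon}^{+}}\le c\,\|w\|_{A_{p}^{+}}$ (proved via the \emph{weak} one--sided reverse H\"older inequality of Mart\'{\i}n--Reyes, with explicit tracking of $\delta\approx\|w\|_{A_{p}^{+}}^{-1}$); (c) Marcinkiewicz interpolation between the weak-type estimates at $p-\epsilon$ and $p+\epsilon$. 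The interpolation constant contributes $\epsilon^{-1/p}\approx\|w\|_{A_{p}^{+}}^{1/(p(p-1))}$, the endpoint norms contribute $\|w\|_{A_{p}^{+}}^{1/p}$, and the product is $\|w\|_{A_{p}^{+}}^{1/(p-1)}$. This bypasses testing conditions and sparse bounds entirely; the reverse H\"older input you identify as the ``main obstacle'' is used, but only to get the openness exponent $\epsilon$, not inside a summation.
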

To prove Theorem \ref{m8} we need some auxiliary statements. The fact that $A_p^+$ (resp $A_p^-$) condition implies $A_{p-\varepsilon}^+$ (resp. $A_{p-\varepsilon}^-$) is well-known (see e.g., the papers \cite{MRe} and \cite{SWA}) but we formulate and prove the next lemma because of sharp estimates.
\begin{lema}\label{z1}
If $w\in A_p^{+}(\Bbb{R})$ $($resp. $w\in A_p^{-}(\Bbb{R})$$)$, then $w\in A_{p-\epsilon}^+(\Bbb{R})$ $($resp. $w\in A_{p-\epsilon}^{-}(\Bbb{R})$$)$, where $\epsilon\approx \|w\|^{1-p'}_{A_p^+(\Bbb{R})}$ $($resp. $\epsilon\approx \|w\|^{1-p'}_{A_p^-(\Bbb{R})}$ $)$ and
$\|w\|_{A_{p-\epsilon}^{+}(\Bbb{R})}\le c\|w\|_{A_p^+(\Bbb{R})}$, $($ resp. $\|w\|_{A_{p-\epsilon}^{-}(\Bbb{R})}\le c\|w\|_{A_p^-(\Bbb{R})}$ $)$.
\end{lema}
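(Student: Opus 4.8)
The plan is to mimic the classical reverse H\"older/openness argument for two--sided $A_p$ weights, but adapted to the one--sided setting and tracking the dependence on the characteristic carefully. I will treat the $A_p^+$ case; the $A_p^-$ case is symmetric (replace $w(x)$ by $w(-x)$). The starting point is that the key one--sided quantity to control is the averaged value of $\sigma:=w^{1-p'}$ on intervals, and that $A_p^+$ is equivalent to a self-improving (reverse H\"older) property of $\sigma$ on intervals lying to the \emph{right} of the averaging point of $w$.

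First I would record the needed reverse H\"older inequality for the ``$\sigma$'' part of a one--sided $A_p^+$ weight: there are absolute constants so that for the relevant configuration of intervals $I^-=(x-h,x)$, $I^+=(x,x+h)$ one has
\begin{equation*}
\Bigl(\frac{1}{h}\int_{I^+}\sigma(t)^{1+\delta}\,dt\Bigr)^{\frac{1}{1+\delta}}\le c\,\frac{1}{h}\int_{I^+}\sigma(t)\,dt,
\qquad \delta\approx \|w\|_{A_p^+(\Bbb{R})}^{-1},
\end{equation*}
with the implied constant $c$ absolute. This one--sided reverse H\"older estimate is exactly the sharp quantitative form of the self--improvement behind Sawyer's theorem; it can either be quoted from \cite{MRe}, \cite{SWA} with the explicit $\delta\approx \|w\|^{1-p'}$ dependence, or re--derived by running the Calder\'on--Zygmund stopping time argument on $\sigma$ over right half--intervals and noting that the $A_p^+$ bound controls the relevant level--set ratios. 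The normalization $\epsilon\approx\|w\|^{1-p'}_{A_p^+}$ in the statement comes from this $\delta$ (indeed $\|w\|^{1-p'}=\|w\|^{-1/(p-1)}$), so the exponent choice is forced by the reverse H\"older step.

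Then I would do the bookkeeping. Write $q=p-\epsilon$, so $q'-1=\frac{1}{q-1}=\frac{1}{p-1-\epsilon}$. For fixed $x,h$, by H\"older's inequality with exponent $1+\delta$ applied to $\sigma^{1-q'}=w^{(1-q')(1-p')^{-1}\cdot(1-p')}$ — more cleanly, write $w^{1-q'}=\sigma^{\theta}$ with $\theta=\frac{q'-1}{p'-1}\in(0,1)$ and apply H\"older with exponents $1/\theta$ and $1/(1-\theta)$, or apply the reverse H\"older inequality above with the exponent chosen so that $1+\delta=\frac{q'-1}{p'-1}$; the point is to estimate
\begin{equation*}
\frac{1}{h}\int_{I^+}w^{1-q'}(t)\,dt
=\frac{1}{h}\int_{I^+}\sigma(t)^{\frac{q'-1}{p'-1}}\,dt
\le c\Bigl(\frac{1}{h}\int_{I^+}\sigma(t)\,dt\Bigr)^{\frac{q'-1}{p'-1}},
\end{equation*}
valid because $\frac{q'-1}{p'-1}=1+\delta$ for the right choice of the relation between $\epsilon$ and $\delta$ (one checks $\frac{q'-1}{p'-1}-1\approx \epsilon\approx\delta$, which is where $\epsilon\approx\|w\|^{1-p'}$ enters again). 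Raising to the power $q-1$ and multiplying by $\frac{1}{h}\int_{I^-}w$, the right--hand side becomes
$\bigl(\frac{1}{h}\int_{I^-}w\bigr)\bigl(\frac{1}{h}\int_{I^+}\sigma\bigr)^{(q-1)\frac{q'-1}{p'-1}}$, and since $(q-1)(q'-1)=1$ this simplifies to $\bigl(\frac{1}{h}\int_{I^-}w\bigr)\bigl(\frac{1}{h}\int_{I^+}\sigma\bigr)^{\frac{1}{p'-1}} = \bigl(\frac{1}{h}\int_{I^-}w\bigr)\bigl(\frac{1}{h}\int_{I^+}\sigma\bigr)^{p-1}\le\|w\|_{A_p^+(\Bbb{R})}$. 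Taking the supremum over $x,h$ gives $\|w\|_{A_{p-\epsilon}^+(\Bbb{R})}\le c\|w\|_{A_p^+(\Bbb{R})}$, as claimed.

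The main obstacle I expect is establishing the one--sided reverse H\"older inequality \emph{with} the sharp $\delta\approx\|w\|^{-1/(p-1)}$ dependence and an absolute constant $c$ — the qualitative version is standard, but the quantitative tracking requires care, since the usual Gehring--type iteration must be run over the correctly oriented (right) half--intervals and the level--set estimate $\sigma(\{t\in I^+:\sigma(t)>\lambda\})\lesssim \lambda^{-?}\cdots$ must be shown to have the constant in its exponent governed linearly by $\|w\|_{A_p^+}^{-1}$. A secondary nuisance is making sure all the exponent identities ($\frac{q'-1}{p'-1}=1+\delta$, $(q-1)(q'-1)=1$, $\epsilon\approx\delta\approx\|w\|^{1-p'}$) are compatible and that the resulting $\epsilon$ is indeed positive and bounded away from $p-1$ when $\|w\|_{A_p^+}$ is large; this is routine but must be stated to justify $q=p-\epsilon>1$.
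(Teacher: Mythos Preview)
Your approach has a genuine gap: the reverse H\"older inequality you state,
\[
\Bigl(\frac{1}{h}\int_{I^+}\sigma^{1+\delta}\Bigr)^{1/(1+\delta)}\le c\,\frac{1}{h}\int_{I^+}\sigma,
\]
is \emph{false} for one--sided weights. Take $w(t)=e^{t}\in A_p^+(\Bbb{R})$ for every $p>1$; then $\sigma(t)=e^{-(p'-1)t}$, and a direct computation on $I^+=(x,x+h)$ shows the ratio of left side to right side behaves like $h^{\delta/(1+\delta)}\to\infty$ as $h\to\infty$. The paper even flags this point explicitly: an $A_p^+$ weight (hence also $\sigma\in A_{p'}^-$) need not satisfy any ordinary reverse H\"older inequality. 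What \emph{does} hold (Lemma~\ref{z3} and its $A_{p'}^-$ analogue) is only the \emph{weak} version
\[
M^{+}\bigl(\sigma^{1+\delta}\chi_{(x,b)}\bigr)(x)\le C\,\bigl(M^{+}(\sigma\chi_{(x,b)})(x)\bigr)^{1+\delta},
\qquad \delta=\tfrac{1}{c\,\|w\|_{A_p^+}^{\,p'-1}},
\]
which bounds the $L^{1+\delta}$ average by the one--sided maximal function of $\sigma$ over the whole segment, not by the $L^1$ average on the same interval. This weaker inequality does not let you simply plug into H\"older as in your second displayed estimate; the ``bookkeeping'' step collapses.

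The paper's route around this is substantially different from yours. With $s=(p+\delta)/(1+\delta)$ (so that $\sigma^{1+\delta}=w^{1-s'}$ up to the right exponent), fix $a<x<b$ and build a finite decreasing sequence $x=x_0>x_1>\cdots>x_N\ge a$ by doubling the mass $\int_{x_k}^b\sigma^{1+\delta}$. On each piece $(x_{k+1},x_k)$ the weak reverse H\"older turns $\bigl(\tfrac{1}{b-a}\int_x^b\sigma^{1+\delta}\bigr)^s$ into $(M^+(\sigma\chi_{(x_{k+1},b)}))^{p+\delta}$; then one invokes the \emph{strong} $L^{p+\delta}_w$ bound for $M^+$ (obtained from the weak $(p,p)$ bound of Theorem~\ref{z7} plus Marcinkiewicz, with norm $\le\|w\|_{A_p^+}^{1/(p+\delta)}$) and sums the geometric series in $k$. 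This yields ${\mathcal M}_s^+(w)\ll\|w\|_{A_p^+}$ and hence $\|w\|_{A_{p-\epsilon}^+}\le c\|w\|_{A_p^+}$, with $\epsilon=p-s=\tfrac{(p-1)\delta}{1+\delta}\approx\|w\|_{A_p^+}^{1-p'}$. Note also that your first display has $\delta\approx\|w\|_{A_p^+}^{-1}$, which is the exponent for $w$ itself in Lemma~\ref{z3}; for $\sigma$ the correct size is $\delta\approx\|\sigma\|_{A_{p'}^-}^{-1}=\|w\|_{A_p^+}^{1-p'}$, matching the statement.
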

We only give the proof of this Lemma for $w\in A_p^+(\Bbb{R})$ the case $w\in A_p^{-}(\Bbb{R}) $ follows analogously.

\vskip+0.2cm

It should be noted that if $w\in A_p^{+}(\Bbb{R})$ then $w$ in general does not satisfy reverse H\"{o}lder inequality (see \cite{SWA}), but nevertheless it satisfies the so called weak reverse H\"{o}lder inequality which is stated as follows:
\begin{lema}[\cite{MRe}]\label{z3}
Let $1<p<\infty$. If $w$ satisfies $A_p^+(\Bbb{R})$ then there exist  positive constants $C$ and $c$ independent of $w$, $a$  and $b$ such that
$$\int_{a}^{b}w^{1+\delta}(x) dx \le C (M^{-}(w\chi_{(a,b)}(b)))^\delta \int_a^b w(x) dx, $$
where $\delta=\frac{1}{c\|w\|_{A_p^{+}(\Bbb{R})}}$,  and therefore
$$M^{-}(w^{1+\delta}\chi_{(a,b)})(b)\le C (M^{-}(w_{\chi_{(a,b)}})(b))^{1+\delta}$$
with the same constants $C$ and $c$.
\end{lema}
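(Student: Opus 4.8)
The plan is first to reduce to a normalized situation. Since $w$ is a.e.\ positive we may assume $0<A:=M^{-}(w\chi_{(a,b)})(b)<\infty$, and, as $\|w\|_{A_p^{+}(\Bbb{R})}$ (hence $\delta$) is unchanged under $w\mapsto w/A$, it suffices to prove $\int_a^b w^{1+\delta}\le C\int_a^b w$ under the normalization $M^{-}(w\chi_{(a,b)})(b)=1$. Write $g=M^{-}(w\chi_{(a,b)})$ and, for $\lambda\ge1$, $\Omega_\lambda=\{x\in(a,b):g(x)>\lambda\}$; since $g\equiv0$ on $(-\infty,a]$ and $g\le g(b)=1$ on $[b,\infty)$, one has $\Omega_\lambda\subset(a,b)$. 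A rising sun (F.~Riesz) analysis of the level sets of $M^{-}$ — with $\phi_\lambda(u)=\int_a^u w-\lambda(u-a)$ one has $x\in\Omega_\lambda$ iff $\phi_\lambda(x)>\min_{[a,x]}\phi_\lambda$ — shows that each $\Omega_\lambda$ is a disjoint union of open intervals $I=(a_I,b_I)$ on every one of which
\[
\frac1{|I|}\int_I w=\lambda\qquad\text{and}\qquad\frac1{x-a_I}\int_{a_I}^x w>\lambda\ \text{ for all }x\in(a_I,b_I).
\]

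The key step, and the only place the hypothesis $w\in A_p^{+}$ enters, is to show that each such component satisfies the \emph{ordinary} (two--sided) Muckenhoupt inequality
\[
\Big(\frac1{|I|}\int_I w\Big)\Big(\frac1{|I|}\int_I w^{1-p'}\Big)^{p-1}\le\|w\|_{A_p^{+}(\Bbb{R})}.
\]
To prove it, tile $I$ from the left: let $x_0=(a_I+b_I)/2$, $x_{k+1}=(a_I+x_k)/2$, and $I_k^{+}=(x_k,x_{k-1})$ with $x_{-1}:=b_I$, so that $(a_I,x_k)$ and $I_k^{+}$ are adjacent intervals of equal length and $\sum_{k\ge0}|I_k^{+}|=|I|$. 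The definition of $\|w\|_{A_p^{+}}$ applied to the pair $(a_I,x_k),I_k^{+}$ gives $\big(\tfrac1{|I_k^{+}|}\int_{a_I}^{x_k}w\big)\big(\tfrac1{|I_k^{+}|}\int_{I_k^{+}}w^{1-p'}\big)^{p-1}\le\|w\|_{A_p^{+}}$; since the first average exceeds $\lambda$, this forces $\int_{I_k^{+}}w^{1-p'}\le|I_k^{+}|(\|w\|_{A_p^{+}}/\lambda)^{p'-1}$, and summing over $k$, then using $\frac1{|I|}\int_I w=\lambda$ and $(p'-1)(p-1)=1$, yields the displayed bound. Hölder's inequality then gives the usual consequence: for every measurable $E\subset I$,
\[
\frac{w(E)}{w(I)}\ge\frac1{\|w\|_{A_p^{+}(\Bbb{R})}}\Big(\frac{|E|}{|I|}\Big)^{p}.
\]

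Next I would run a Calderón--Zygmund iteration over the levels $\lambda=2^{k}$. Every component of $\Omega_{2\lambda}$ lies inside a component $I$ of $\Omega_\lambda$, and, as each component $J$ of $\Omega_{2\lambda}$ satisfies $\frac1{|J|}\int_J w=2\lambda$ while $\int_I w=\lambda|I|$, summing over those $J\subset I$ gives $|\Omega_{2\lambda}\cap I|\le\tfrac12|I|$; the $A_\infty$--bound with $E=I\setminus\Omega_{2\lambda}$ then yields $w(\Omega_{2\lambda}\cap I)\le(1-2^{-p}/\|w\|_{A_p^{+}})\,w(I)$. Summing over the components of $\Omega_\lambda$ and using $\|w\|_{A_p^{+}(\Bbb{R})}\ge1$ (seen by letting $h\to0$ at a Lebesgue point), one gets $w(\Omega_{2\lambda})\le\theta\,w(\Omega_\lambda)$ with $\theta:=1-2^{-p}/\|w\|_{A_p^{+}}\in(\tfrac12,1)$, hence $w(\Omega_{2^k})\le\theta^{k}W$ with $W:=\int_a^b w$, and therefore $w(\Omega_t)\le\theta^{-1}Wt^{-\mu}$ for $t\ge1$, where $\mu:=\log_2(1/\theta)\ge c_p^{-1}/\|w\|_{A_p^{+}}$. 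Since $w\le g$ a.e.\ on $(a,b)$, the layer--cake formula together with $w(\{w>t\}\cap(a,b))\le W$ for $0<t<1$ and $\le w(\Omega_t)$ for $t\ge1$ gives
\[
\int_a^b w^{1+\delta}=\delta\int_0^\infty t^{\delta-1}w\big(\{w>t\}\cap(a,b)\big)\,dt\le W+\frac{\delta}{\theta}\,W\int_1^\infty t^{\delta-1-\mu}\,dt ,
\]
which for $\delta\le\mu/2$ — in particular for $\delta=1/(c\|w\|_{A_p^{+}})$ with $c$ large enough (depending only on $p$) — is finite and at most $C\int_a^b w$. The second displayed inequality of the Lemma follows by applying the first to $(b-h,b)$ in place of $(a,b)$, for each $h>0$, using $M^{-}(w\chi_{(b-h,b)})(b)\le M^{-}(w\chi_{(a,b)})(b)$ and $\frac1h\int_{b-h}^b w\le M^{-}(w\chi_{(a,b)})(b)$, and then taking the supremum over $h$.

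The hard part is the key step: there the asymmetric information carried by $w\in A_p^{+}$ — which controls $w$ on a left interval only against $w^{1-p'}$ on the \emph{adjacent} right interval — must be upgraded to genuine two--sided control on the intervals $I$, and this works only because the components of the level sets of $M^{-}$ have the special feature that \emph{all} of their left sub--averages of $w$ exceed $\lambda$, which is exactly what lets the left pieces $(a_I,x_k)$ serve as the ``$w$'' interval while the pieces $I_k^{+}$, sweeping out $I$, serve as the ``$w^{1-p'}$'' interval.
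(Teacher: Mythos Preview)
Your argument is correct and self-contained; the rising-sun decomposition of the level sets of $g=M^{-}(w\chi_{(a,b)})$, the upgrade from $A_p^{+}$ to a genuine two-sided $A_p$ bound on each component (via the dyadic tiling from the left endpoint), the geometric decay $w(\Omega_{2\lambda})\le\theta\,w(\Omega_\lambda)$, and the final layer-cake all go through with the stated dependence $\delta\approx 1/\|w\|_{A_p^{+}}$. The derivation of the second displayed inequality from the first is also fine.

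The route, however, is different from what the paper actually does. The paper does not reprove the lemma; it simply invokes Lemma~5 of Mart\'{\i}n--Reyes \cite{MRe} and tracks the constants. That argument is an \emph{absorption} scheme: one arrives directly at
\[
\Big(1-\frac{\delta}{(1+\delta)\alpha\beta^{1+\delta}}\Big)\int_a^b w^{1+\delta}\le \big(M^{-}(w\chi_{(a,b)})(b)\big)^{\delta}\int_a^b w,
\]
with $\alpha=1-2(4\|w\|_{A_p^{+}}\beta)^{1/(p-1)}$, and then chooses $\beta=1/(2^{2p}\|w\|_{A_p^{+}})$ (so $\alpha=1/2$) and $\delta=1/(c\|w\|_{A_p^{+}})$ so that the parenthesis on the left is bounded below. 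In other words, \cite{MRe} bounds $\int_a^b w^{1+\delta}$ by a good term plus a fraction of itself and absorbs, whereas you iterate the Calder\'on--Zygmund step to get $w(\Omega_{2^k})\le\theta^k W$ and then integrate the decay. Both routes hinge on the same one-sided input (control of $w$ on a left interval against $w^{1-p'}$ on the adjacent right interval) and both yield the same quantitative $\delta$; your presentation has the advantage of isolating explicitly the key structural fact---that components of the level sets of $M^{-}$ satisfy the \emph{two-sided} $A_p$ inequality with constant $\|w\|_{A_p^{+}}$---which in \cite{MRe} is implicit in the parameters $\alpha,\beta$.
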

\begin{proof}
The proof of this lemma is given in \cite{MRe}, but here we present the proof only for estimate of $\delta$. Following the proof of Lemma 5 in that paper we have the following estimate
$$\bigg(1-\frac{\delta}{(1+\delta)\alpha\beta^{1+\delta}}\bigg)\int\limits_a^b w^{1+\delta}(x) dx \le (M^{-}(w\chi_{(a,b)}(b)))^\delta \int\limits_a^b w(x) dx ,$$
where $\alpha= 1- 2 \big( 4 \|w\|_{A_p^+({\Bbb{R}})}\beta)^{1/(p-1)}$ and  $\beta$ can be taken appropriately later.   Finally, by assuming that $\beta = 1/ (2^{2p}  \|w\|_{A_p^+({\Bbb{R}})})$ (in this case $\alpha=1/2$) we have that
$$\frac{\delta}{(1+\delta)\alpha\beta^{1+\delta}} \leq 1 $$
taking $\delta=\frac{1}{c\|w\|_{A_p^+(\Bbb{R})}}$  with the constant $c$ large enough independently $w$, $a$ and $b$.
The lemma is proved.
\end{proof}
The following Theorem is in \cite{SWA}.
\begin{thma}\label{z7}
Suppose that $1<p<\infty$. Then

\rm{(i)} there is a positive constant $c_p$ depending only on $p$ such that the weak type inequalities
$$w(\{M^{+}f>\alpha\})\le c_p \|w\|_{A_p^{+}(\Bbb{R})}\bigg(\frac{\|f\|_{L^p_w(\Bbb{R})}}{\alpha}\bigg)^p$$
\rm{(ii)}
$$w(\{M^{-}f>\alpha\})\le c_p \|w\|_{A_p^{-}(\Bbb{R})}\bigg(\frac{\|f\|_{L^p_w(\Bbb{R})}}{\alpha}\bigg)^p$$
holds.
\end{thma}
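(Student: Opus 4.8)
The plan is to prove part (i) and then obtain part (ii) by reflection: writing $\tilde f(t)=f(-t)$ and $\tilde w(t)=w(-t)$ one has $M^-f(x)=M^+\tilde f(-x)$, and the change of variables $s=-t$ both preserves the weighted $L^p$ norm and interchanges the characteristics $\|w\|_{A_p^-(\Bbb{R})}=\|\tilde w\|_{A_p^+(\Bbb{R})}$, so (ii) is immediate from (i). To prove (i) I may assume $f\ge 0$, bounded and compactly supported, and by inner regularity of the measure $w\,dx$ it suffices to bound $w(K)$ for an arbitrary compact $K\subset E_\alpha:=\{x:M^+f(x)>\alpha\}$. For each $x\in K$ I choose $h_x>0$ with $\frac1{h_x}\int_x^{x+h_x}f>\alpha$, and set $I_x=(x,x+h_x)$ and $J_x=(x-h_x,x)$: two adjacent intervals of equal length $h_x$, one to the right and one to the left of $x$.

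The computational heart is a single-interval estimate that ties the $w$-mass of the left interval $J_x$ to the $L^p_w$-mass of $f$ on the right interval $I_x$, through the one-sided characteristic. Writing $\sigma=w^{1-p'}$ and applying H\"older's inequality on $I_x$ gives
\[
\alpha h_x<\int_{I_x}f\le\Big(\int_{I_x}f^p w\Big)^{1/p}\Big(\int_{I_x}\sigma\Big)^{1/p'},
\]
whence $\int_{I_x}f^pw>\alpha^p h_x^p\big(\int_{I_x}\sigma\big)^{-(p-1)}$, using $p/p'=p-1$. On the other hand, testing the $A_p^+(\Bbb{R})$ condition at the point $x$ with parameter $h_x$ reads $\big(\frac1{h_x}\int_{x-h_x}^x w\big)\big(\frac1{h_x}\int_x^{x+h_x}\sigma\big)^{p-1}\le\|w\|_{A_p^+(\Bbb{R})}$, i.e. $w(J_x)\big(\int_{I_x}\sigma\big)^{p-1}\le\|w\|_{A_p^+(\Bbb{R})}\,h_x^p$. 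Combining the two inequalities eliminates both $h_x$ and $\int_{I_x}\sigma$ and yields
\[
w(J_x)\le\frac{\|w\|_{A_p^+(\Bbb{R})}}{\alpha^p}\int_{I_x}f^p w.
\]

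To finish I would invoke a covering argument: extract from $K$ a countable subfamily $\{x_j\}$ such that the closed left half-intervals $\overline{J_{x_j}}$ cover $K$, while the right half-intervals $I_{x_j}$ have bounded overlap, $\sum_j\chi_{I_{x_j}}\le N$ for an absolute constant $N$. Granting such a family, summing the single-interval estimate gives
\[
w(K)\le\sum_j w(J_{x_j})\le\frac{\|w\|_{A_p^+(\Bbb{R})}}{\alpha^p}\sum_j\int_{I_{x_j}}f^pw\le\frac{N\|w\|_{A_p^+(\Bbb{R})}}{\alpha^p}\int_{\Bbb{R}}f^p w,
\]
and letting $K\uparrow E_\alpha$ produces the asserted bound with $c_p=N$.

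The hard part is precisely this covering lemma. The $A_p^+$ condition controls the $w$-mass of the interval to the \emph{left} of $x$ only by data on the interval to the \emph{right}, and it says nothing about $w(I_x)$ itself; this forces an asymmetric selection in which the left half-intervals must do the covering while their companion right half-intervals carry the $L^p_w$-mass and must be kept of bounded overlap. A naive right-to-left greedy selection covers $K$ but fails to control the overlap of the right intervals, since a selected point far to the left may carry a very long right interval. The correct device is the one-sided covering lemma underlying Sawyer's treatment of $M^\pm$ in \cite{SWA}; arranging that it delivers simultaneously the left-cover and the bounded right-overlap with an absolute constant $N$ is the delicate point on which the final constant $c_p$ depends.
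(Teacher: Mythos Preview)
The paper does not prove this statement; it is quoted from Sawyer \cite{SWA} and used as an input. So there is nothing in the paper to compare your argument against.

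On the substance: your reflection reduction of (ii) to (i) and your single-interval estimate
\[
w(J_x)\le\frac{\|w\|_{A_p^+(\Bbb R)}}{\alpha^p}\int_{I_x}f^p w
\]
are correct and are precisely the right local ingredients. The gap is the covering step, and it is genuine. The lemma you describe --- after fixing some choice of $h_x$ for each $x\in K$, extract $\{x_j\}\subset K$ so that the $\overline{J_{x_j}}$ cover $K$ while $\sum_j\chi_{I_{x_j}}\le N$ for an absolute $N$ --- is \emph{false} in general. Take $K=\{0,1,\dots,n\}$ and $h_k=n-k+\tfrac12$; then every $I_k=(k,n+\tfrac12)$ contains $n+\tfrac14$, while $J_k=[2k-n-\tfrac12,k]$. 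To cover the point $n$ one must select $k=n$; to cover $n-1$ one must select $k=n-1$; and to cover $n-2^i$ one needs some $k\in\{n-2^i,\dots,n-2^{i-1}-1\}$, disjoint blocks for distinct $i$. Hence any admissible selection has at least $c\log n$ elements, and the overlap at $n+\tfrac14$ is at least that large. Taking $f=2\alpha\,\chi_{(-1,n+1)}$ makes all these $h_k$ legitimate choices, so the obstruction is not ruled out by the constraint $\frac1{h_x}\int_x^{x+h_x}f>\alpha$.

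Sawyer's actual argument does not go through a covering lemma of this shape. It uses the rising-sun decomposition of the open set $\{M^+f>\alpha\}=\bigcup_j(a_j,b_j)$ together with the key structural fact that $\int_x^{b_j}f\ge\alpha(b_j-x)$ for every $x\in(a_j,b_j)$, and then treats each component directly. Your local estimate is a correct building block, but the global assembly cannot be outsourced to an unspecified ``one-sided covering lemma'' with the properties you list; that lemma does not exist.
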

Now we recall the Marcinkiewicz interpolation theorem.
\begin{thma}\label{z9}
Suppose $1\le p_0<p_1<\infty$ and that $T$ is a sublinear operator of weak-type $p_0$ and $p_1$, with respect to the measure $d\mu=wdx$, with norms $R_0$ and $R_1$ respectively, then $T$ is  bounded on $L^p_w(\Bbb{R})$ for all $p_0<p<p_1$. In fact, for any $0<t<1$,
$$\|Tf\|_{L^{p_t}_w(\Bbb{R})}\le C_tR_0^{1-t}R_1^{t}\|f\|_{L^{p_t}_w(\Bbb{R})}$$
where
$$\frac{1}{p_t}=\frac{1-t}{p_0}+\frac{t}{p_1}\indent \textrm{and} \;\;C_t^p=\frac{2^{p_t}}{p_t}\bigg(\frac{p_1}{p_1-p_t}+
\frac{p_0}{p_t-p_0}\bigg).$$
\end{thma}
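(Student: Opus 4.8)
The plan is to prove this quantitative Marcinkiewicz theorem by the classical truncation argument, carried out directly with respect to the measure $d\mu=w\,dx$; since the whole scheme uses only the distribution function of $Tf$ together with the two weak-type hypotheses, the presence of the weight plays no special role and nothing beyond measurability of $T$ on $L^{p_0}_w+L^{p_1}_w$ is needed. First I would pass to the layer-cake representation of the $L^{p_t}_w$-norm,
$$\|Tf\|_{L^{p_t}_w(\Bbb{R})}^{p_t} = p_t\int_0^\infty \alpha^{p_t-1}\,\mu\big(\{x\in\Bbb{R}: |Tf(x)|>\alpha\}\big)\,d\alpha,$$
so that the task is reduced to a level-by-level estimate for the distribution function of $Tf$.

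Next, for each $\alpha>0$ I would split $f=f_0^\alpha+f_1^\alpha$ at a height proportional to $\alpha$, setting $f_0^\alpha=f\,\chi_{\{|f|>\sigma\alpha\}}$ and $f_1^\alpha=f\,\chi_{\{|f|\le\sigma\alpha\}}$, where the proportionality constant $\sigma>0$ is left free and is to be fixed only at the end. Then $f_0^\alpha\in L^{p_0}_w$ and $f_1^\alpha\in L^{p_1}_w$, so both weak-type hypotheses are applicable. By sublinearity $\{|Tf|>\alpha\}\subseteq\{|Tf_0^\alpha|>\alpha/2\}\cup\{|Tf_1^\alpha|>\alpha/2\}$, whence
$$\mu(\{|Tf|>\alpha\}) \le \Big(\frac{2R_0}{\alpha}\Big)^{p_0}\int_{\{|f|>\sigma\alpha\}}|f|^{p_0}\,d\mu + \Big(\frac{2R_1}{\alpha}\Big)^{p_1}\int_{\{|f|\le\sigma\alpha\}}|f|^{p_1}\,d\mu.$$
Inserting this into the layer-cake integral and applying Tonelli's theorem to interchange the $\alpha$- and $x$-integrations, the inner $\alpha$-integrals become elementary: over $\{|f|>\sigma\alpha\}$ one integrates $\alpha^{p_t-1-p_0}$ on $(0,|f|/\sigma)$, which converges precisely because $p_t>p_0$, and over $\{|f|\le\sigma\alpha\}$ one integrates $\alpha^{p_t-1-p_1}$ on $(|f|/\sigma,\infty)$, which converges precisely because $p_t<p_1$. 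Both reduce to $\|f\|_{L^{p_t}_w}^{p_t}$, and collecting constants gives
$$\|Tf\|_{L^{p_t}_w}^{p_t}\le p_t\Big[\frac{(2R_0)^{p_0}}{p_t-p_0}\,\sigma^{p_0-p_t} + \frac{(2R_1)^{p_1}}{p_1-p_t}\,\sigma^{p_1-p_t}\Big]\|f\|_{L^{p_t}_w}^{p_t}.$$

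The decisive step — and the only genuinely delicate one — is the choice of the truncation constant $\sigma$. Using the identities $(1-t)p_t=p_0(p_1-p_t)/(p_1-p_0)$ and $tp_t=p_1(p_t-p_0)/(p_1-p_0)$, which follow from $1/p_t=(1-t)/p_0+t/p_1$, one checks that choosing $\sigma$ proportional to $R_0^{p_0/(p_1-p_0)}R_1^{-p_1/(p_1-p_0)}$ forces both $R_0^{p_0}\sigma^{p_0-p_t}$ and $R_1^{p_1}\sigma^{p_1-p_t}$ to collapse to the common geometric mean $R_0^{(1-t)p_t}R_1^{tp_t}$. This is exactly the mechanism that produces the factor $R_0^{1-t}R_1^t$ in the final norm inequality, and the numerical prefactor inherited from the two summands, together with the $2^{p_t}$ arising from the sublinearity split after the balancing choice of $\sigma$, yields the stated closed form of $C_t$.

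I expect no conceptual obstacle beyond this balancing: the argument is self-contained and the weight enters only through $\mu$. The one place demanding care is the bookkeeping of the numerical constants through the substitution for $\sigma$ — in particular verifying that the two contributions are genuinely comparable so that they combine into the displayed expression for $C_t^{p_t}$, and confirming that the integrability of the inner $\alpha$-integrals is exactly what the strict inequalities $p_0<p_t<p_1$ guarantee. I would therefore present the general-$\sigma$ estimate first and only then substitute the balancing value, so that the emergence of the geometric mean and of the explicit constant is transparent.
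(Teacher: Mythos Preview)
The paper does not prove this statement at all: it is introduced with the words ``Now we recall the Marcinkiewicz interpolation theorem'' and is simply quoted as a known result, with no argument supplied. So there is no proof in the paper to compare against.

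Your proposal is the standard truncation/layer-cake proof of Marcinkiewicz interpolation, and it is correct as a strategy and in its main computations. One small caution on the bookkeeping: with your choice $\sigma=R_0^{p_0/(p_1-p_0)}R_1^{-p_1/(p_1-p_0)}$ the two summands pick up factors $2^{p_0}$ and $2^{p_1}$ (from the $(2R_i)^{p_i}$), not a common $2^{p_t}$, so the numerical constant you obtain is
\[
p_t\Big(\frac{2^{p_0}}{p_t-p_0}+\frac{2^{p_1}}{p_1-p_t}\Big),
\]
which does not literally coincide with the displayed $C_t^{p_t}=\dfrac{2^{p_t}}{p_t}\Big(\dfrac{p_1}{p_1-p_t}+\dfrac{p_0}{p_t-p_0}\Big)$. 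This is not a gap in your argument --- the discrepancy is either a typo in the quoted constant (note also the suspicious exponent ``$p$'' on $C_t$ and the $1/p_t$ where one normally expects $p_t$) or reflects a slightly different normalization of the cut-off height used by whatever source the authors are quoting. If you want to match a specific closed form exactly, absorb the factor $2$ into $\sigma$ before optimizing; otherwise your derivation already yields a constant of the same order and with the correct blow-up as $p_t\to p_0$ or $p_t\to p_1$, which is all that the paper actually uses downstream.
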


\vskip+0.2cm

{\emph Proof of Lemma \ref{z1}}. We follow the proof of Proposition 3 in \cite{MRe}. For simplicity let us denote $s=p-\epsilon$. To show $w\in A_{s}^{+}(\Bbb{R})$ , $1<s<p$, it is enough to show that
$${\mathcal{M}}_s^+(w) := \sup\limits_{a,b}\sup\limits_{a<x<b}\frac{1}{(b-a)^s}\bigg( \int\limits_{a}^{x}w(t) dt \bigg) \bigg(\int\limits_{x}^{b} w^{-1/(s-1)(t) dt}\bigg)^{s-1}\ll \|w\|_{A_+^p({\Bbb{R}})} $$
because (see \cite{MRe})
$$ {\mathcal{M}}_r^+(w) \approx \|w\|_{A_r^+({\Bbb{R}})}, \;\;\;\; 1<r<\infty. $$

Observe that $w\in A_{p}^{+}(\Bbb{R})$ if and only if $\sigma\in A_{p'}^{-}(\Bbb{R})$. Then by analogue Lemma \ref{z3} for $A_{p'}^{-}(\Bbb{R})$ classes, we have that there is a constant $C$ independent of $w$, $x$ and $b$  such that
\begin{equation}\label{z4}
M^{+}(\sigma^{1+\delta}\chi_{(x,b)})(x)\le C (M^{+}(\sigma_{\chi_{(x,b)}})(x))^{1+\delta},
\end{equation}
 where $\delta=\frac{1}{c\|\sigma\|_{A_{p'}^{-}(\Bbb{R})}}=\frac{1}{c\|w\|_{A_{p}^{+}(\Bbb{R})}^{p'-1}}$. Now we claim that $w\in A_s^{+}({\Bbb{R}})$, where $s=\frac{p+\delta}{1+\delta}$. Fix $a<x<b$. Since $\sigma$ is locally integrable, it follows from \eqref{z4} that same holds for $\sigma^{1+\delta}$. Hence, there exists a finite deceasing sequence $x_0=x>x_1>\cdots>x_{N}\ge a=x_{N+1}$ such that
\begin{equation}\label{z5}
\int\limits_{a}^{b}\sigma^{1+\delta}=2^k\int\limits_{x}^{b}\sigma^{1+\delta}\indent \textrm{if}\;\; k=0,\cdots,N \;\textrm{and}\;
\int\limits_{a}^{x_N}\sigma^{1+\delta}<2^N\int\limits_{x}^{b}\sigma^{1+\delta}.
\end{equation}
From \eqref{z5} it  follows the fact that for every $k=0,\cdots,N$
\begin{equation}\label{z6}
\int\limits_{x_{k+1}}^{b}\sigma^{1+\delta}\le2^{k+1}\int\limits_{x}^{b}\sigma^{1+\delta}.
\end{equation}
On the other hand, \eqref{z4} and \eqref{z5} yield
\begin{eqnarray*}
\int\limits_a^x w\bigg(\frac{1}{b-a}\int\limits_{x}^{b}\sigma^{1+\delta}\bigg)^s&=&\sum\limits_{k=0}^{N}\frac{1}{2^{ks}}\int\limits_{x_{k+1}}^{x_k}
w\bigg(\frac{1}{b-a}\int\limits_{x_k}^{b}\sigma^{1+\delta}\bigg)^s\\
&\le&\sum\limits_{k=0}^{N}\frac{1}{2^{ks}}\int\limits_{x_{k+1}}^{x_k}
w(y)(M^{+}(\sigma^{1+\delta}\chi_{(y,b)})(y))^s dy\\
&\le&\sum\limits_{k=0}^{N}\frac{2}{2^{ks}}\int\limits_{x_{k+1}}^{x_k}
w(y)(M^{+}(\sigma \chi_{(x_{k+1},b)})(y))^{p+\delta} dy.
\end{eqnarray*}
Since $w\in A_p^{+}(\Bbb{R})$, we know by Theorem \ref{z7} that $M^{+}$ maps  $L^{p}_{w}(\Bbb{R})$ into weak $L^{p}_{w}(\Bbb{R})$ with the norm less than or equal to $\|w\|_{A_p^{+}(\Bbb{R})}^{1/p}$. Then, by the Marcinkiewicz interpolation theorem, $M^+$ maps $L^{p+\delta}_{w}(\Bbb{R})$
into $L^{p+\delta}_{w}(\Bbb{R})$ with the norm $\leq \|w\|^{1/(p+\delta)}_{A_p^+({\Bbb{R}})}$. This together with \eqref{z6} and the fact that $s>1$ gives
\begin{eqnarray*}
\int\limits_a^x w\bigg(\frac{1}{b-a}\int\limits_{x}^{b}\sigma^{1+\delta}\bigg)^s&\le&2\|w\|_{A_{p}^{+}(\Bbb{R})}\sum\limits_{k=0}^{N}\frac{1}{2^{ks}}\int\limits_{x_{k+1}}^{b}
\sigma^{1+\delta}\\
&\le&2\|w\|_{A_{p}^{+}(\Bbb{R})}\sum\limits_{k=0}^{N}\frac{2^{k+1}}{2^{ks}}\int\limits_{x}^{b}
\sigma^{1+\delta}\le 2\|w\|_{A_{p}^{+}}\int\limits_{x}^{b}\sigma^{1+\delta}<\infty.
\end{eqnarray*}
Now the lemma follows. Further, notice that $\epsilon=\frac{p-1}{1+c\|w\|^{p'-1}_{A_p^{+}(\Bbb{R})}}\approx c_p\|w\|^{1-p'}_{A_{p}^{+}(\Bbb{R})}$. $\;\;\;\; \Box$
\vskip+0.2cm

{\em Proof of Theorem \ref{m8}}. (i) We follow \cite{Buck}. Let $w\in A_{p}^{+}(\Bbb{R})$. Then by Lemma \ref{z1}, $w\in A_{p-\epsilon}^{+}(\Bbb{R})$, and  trivially, $w$ is also an $A_{p+\epsilon}^{+}(\Bbb{R})$ weight. Moreover, the following inequalities
$$\|w\|_{A_{p-\epsilon}^{+}(\Bbb{R})}\le c\|w\|_{A_p^+(\Bbb{R})},$$
$$ \|w\|_{A_{p+\epsilon}^{+}(\Bbb{R})}\le \|w\|_{A_p^+(\Bbb{R})}.$$
hold. Applying the  Marcinkiewicz interpolation theorem  corresponding to weak type results at $p-\epsilon$ and $p+\epsilon$, we get  estimate
$$\|M^{+}f\|_{L^{p}_{w}(\Bbb{R})}\le c\|w\|_{A_{p}^{+}(\Bbb{R})}^{\frac{1}{p-1}}\|f\|_{L^{p}_{w}({\Bbb{R}})}.$$
{\em Sharpness.} Let us take $0<\epsilon<1$. Let $w(x)=|1-x|^{(1-\epsilon)(p-1)}$. Then it is easy to check that
$$\|w\|_{A_{p}^{+}(\Bbb{R})}^{1/(p-1)}\approx\frac{1}{\epsilon}.$$
Observe also that for
$$f(t)=(1-t)^{\epsilon(p-1)-1}\chi_{(0,1)}(t),$$
we have $f\in{L^p_w}$. Now let $0<x<1$. Then we find that the following estimate
$$M^{+}f(x)\ge\frac{1}{1-x}\int\limits_{x}^{1}f(t)dt=c\frac{1}{\epsilon}f(x)$$
holds. Finally we conclude
$$\|M^{+}f\|_{L^{p}_w(\Bbb{R})}\ge c\frac{1}{\epsilon}\|f\|_{L^{p}_{w}({\Bbb{R}})}.$$
Thus the exponent $1/(p-1)$ is sharp.

Proof of (ii) is similar. We show only {\em Sharpness.} For that let us take $0<\epsilon<1$. Suppose that $w(x)=|x|^{(1-\epsilon)(p-1)}$. Then it is easy to check that $\|w\|_{A_{p}^{-}(\Bbb{R})}^{1/(p-1)}\approx\frac{1}{\epsilon}.$ Observe also that for $f(t)=t^{\epsilon(p-1)-1}\chi_{(0,1)}(t)$, $f\in{L^p_w}({\Bbb{R}})$.

Now let $0<x<1$. Then we have the following estimate
$$M^{-}f(x)\ge\frac{1}{x}\int\limits_{0}^{x}f(t)dt=c\frac{1}{\epsilon}f(x).$$
Finally
$$\|M^{-}f\|_{L^{p}_w(\Bbb{R})}\ge c\frac{1}{\epsilon}\|f\|_{L^{p}_{w}({\Bbb{R}})}.$$
Thus the exponent $1/(p-1)$ is sharp. $\;\;\;\; \Box$
\section{One-sided fractional integrals}

Let $f$ be a locally integrable function on $\Bbb{R}$ and let $0<\alpha<1$. Then we define one--sided fractional  integrals

$$\mathcal{W}_{\alpha}f(x)=\int\limits_{x}^{\infty}\frac{f(t)}{(t-x)^{1-\alpha}}dt,\indent\indent \mathcal{R}_{\alpha}f(x)=\int\limits_{-\infty}^{x}\frac{f(t)}{(x-t)^{1-\alpha}}dt\indent\indent x\in {\Bbb{R}},$$
and corresponding fractional maximal functions:

$$M_{\alpha}^{+}f(x)=\sup \limits_{h>0} \frac{1}{ h^{1-\alpha} } \int\limits_{x}^{x+h} |f(t)| dt, \;\;\;\;  M_{\alpha}^{-}f(x)=\sup\limits_{h>0}\frac{1}{h^{1-\alpha}}\int\limits_{x-h}^h |f(t)|dt, \;\;\;\;\; x\in\Bbb{R}.$$

Taking formally  $\alpha=0$, we have one--sided Hardy--Littlewood maximal operators.

%
%
%
%
%
%

As it was mentioned above (see Introduction)  necessary and sufficient conditions governing the one-weight inequality for one--sided fractional integrals under the $A_{p,q}^{\pm}$ conditions were established in \cite{AND-SWA}. Now we give the definition of $A_{p,q}^{\pm}$ classes:

\begin{defin}
Let $1<p,q<\infty$. We say that a weight function $w$ defined on $\Bbb{R}$ satisfies the $A_{p,q}^{+}(\Bbb{R})$ condition $(w\in A_{p,q}^{+}(\Bbb{R}))$, if
\begin{equation*}
\|w\|_{A_{p,q}^{+}(\Bbb{R})}:=\sup\limits_{\substack{x\in\Bbb{R}\\h>0}}\bigg(\frac{1}{h}\int\limits_{x-h}^{x}w^q(t)dt\bigg)\bigg(\frac{1}{h}\int\limits^{x+h}_{x}
w^{-p'}(t)dt\bigg)^{q/p'}<\infty;
\end{equation*}
a weight function $w$ satisfies the $A_{p,q}^{-}(\Bbb{R})$ condition $(w\in A_{p,q}^{-}(\Bbb{R}))$, if
\begin{equation*}
\|w\|_{A_{p,q}^{-}(\Bbb{R})}:=\sup\limits_{\substack{x\in\Bbb{R}\\h>0}}\bigg(\frac{1}{h}\int\limits_{x}^{x+h}w^q(t)dt\bigg)\bigg(\frac{1}{h}\int\limits^{x}_{x-h}
w^{-p'}(t)dt\bigg)^{q/p'}<\infty.
\end{equation*}
\end{defin}

\subsection{The two-weight problem}

It is well-known two-weight criteria of various type for one-sided fractional integrals. The Sawyer type two-weight characterization  for the operators $M^+_{\alpha}$, $M^-_{\alpha}$ reads as follows (see \cite{MaTo}):

\begin{thma}\label{Sa1}
Let $1<p\leq q<\infty$ and let $0<\alpha<1$. Suppose that $v$ and $w$ be weight functions on $\Bbb{R}$. Then\\
\rm{(i)}  $M^+_{\alpha}$ is bounded from $L^p_w(\Bbb{R})$ to $L^{q}_v(\Bbb{R})$ if and only if $(v,w)\in S^+_{p,q}$, i.e.

$$A^+_{MT}(v,w, p,q):= \sup_{I} \frac{\Big\| M^{+}_{\alpha}(\sigma \chi_I)\|_{L^q_v({\Bbb{R}})}}{\big(\sigma(I)\big)^{1/p}}<\infty, $$
where the supremum is taken over all intervals $I$  with $\sigma(I):= w^{1-p'}(I)<\infty$.
Moreover, $\|M^+_{\alpha}\|_{L^p_w \to L^{q}_v}\approx A^+_{MT}(v,w,p,q)$.

\noindent\rm{(ii)}  $M^-_{\alpha}$ is bounded from $L^p_w(\Bbb{R})$ to $L^{q}_v(\Bbb{R})$ if and only if $(v,w)\in S^-_{p,q}$, i.e.

$$A^-_{MT}(v,w, p,q):= \sup_{I} \frac{\Big\| M^{-}_{\alpha}(\sigma \chi_I)\|_{L^q_v({\Bbb{R}})}}{\big(\sigma(I)\big)^{1/p}}<\infty, $$
where the supremum is taken over all intervals $I$  with $\sigma(I):= w^{1-p'}(I)<\infty$.
Moreover, $\|M^-_{\alpha}\|_{L^p_w \to L^{q}_v}\approx A^-_{MT}(v,w,p,q)$.

\end{thma}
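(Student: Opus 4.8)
\textbf{Proof proposal for Theorem \ref{Sa1}.}

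The plan is to deduce this from the known Sawyer testing theorem for the two-sided fractional maximal operator, exploiting the elementary fact that a one-sided maximal function is, up to dyadic-type decompositions, comparable to a sum of averages over intervals lying to one side of the point. First I would record the (standard) reduction: for the operator $M^+_\alpha$ it suffices to test on characteristic functions of intervals, because $M^+_\alpha$ commutes in the appropriate sense with the natural ordering of $\Bbb{R}$ and the duality $w\in A_p^+\Leftrightarrow\sigma\in A_{p'}^-$ already used in the proof of Lemma \ref{z1} lets us pass between $w$ and $\sigma=w^{1-p'}$. The necessity of the condition $A^+_{MT}(v,w,p,q)<\infty$ is immediate: insert $f=\sigma\chi_I$ into the boundedness inequality, note $\|f\|_{L^p_w}=\sigma(I)^{1/p}$, and read off the definition of $A^+_{MT}$; this also gives one half of the norm equivalence, namely $A^+_{MT}(v,w,p,q)\le\|M^+_\alpha\|_{L^p_w\to L^q_v}$.

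For sufficiency the key steps, in order, are: (1) fix a nonnegative $f\in L^p_w$ and, following the Calder\'on--Zygmund / Sawyer machinery adapted to the one-sided setting, build a sparse-type family of intervals $\{I_j\}$ (the analogue of the level sets $\{M^+_\alpha f>2^k\}$) on which $M^+_\alpha f$ is controlled pointwise by $\sum_j \langle f\rangle_{I_j}^{\alpha}\chi_{E_j}$ with $\{E_j\}$ pairwise disjoint and $|E_j|\gtrsim|I_j|$; (2) linearize, writing $\|M^+_\alpha f\|_{L^q_v}$ as a dual pairing against some $g\in L^{q'}_v$ with unit norm, and expand using the sparse decomposition; (3) factor each average $\langle f\rangle_{I_j}$ as an average of $f w^{1/p}\cdot w^{-1/p}$, apply H\"older's inequality on $I_j$ to separate the $f$-part from the $\sigma$-part; (4) invoke the testing hypothesis $A^+_{MT}$ precisely on the intervals $I_j$ to convert the $\sigma$-averages and the $v$-mass of $I_j$ into the constant $A^+_{MT}(v,w,p,q)$; (5) sum the resulting terms using disjointness of the $E_j$'s together with a discrete Carleson-embedding / $\ell^p$--$\ell^{q'}$ argument (here $p\le q$ is used), collapsing the double sum back to $\|f\|_{L^p_w}\|g\|_{L^{q'}_v}$. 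This yields $\|M^+_\alpha\|_{L^p_w\to L^q_v}\lesssim A^+_{MT}(v,w,p,q)$, completing the equivalence. Part (ii) is identical after reflecting $x\mapsto -x$, which interchanges $M^+_\alpha\leftrightarrow M^-_\alpha$, $S^+_{p,q}\leftrightarrow S^-_{p,q}$, and $A^+_{MT}\leftrightarrow A^-_{MT}$.

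The main obstacle I anticipate is step (5), the summation: making the disjointness of the $E_j$ interact correctly with the exponents $p\neq q$ requires a careful Carleson-type measure estimate (one must show the ``balayage'' $\sum_j A^+_{MT}$-weighted masses supported on $I_j$ embeds $L^p_w$ into $L^q$ of the discrete measure), and it is here that the hypothesis $p\le q$ is genuinely needed rather than merely convenient. A secondary technical point is step (1): the standard Besicovitch/Vitali covering lemmas must be replaced by their one-sided analogues so that the selected intervals $I_j=(c_j,d_j)$ all have the point where the average is nearly extremal at their \emph{left} endpoint, matching the geometry in the definition of $A^+_{MT}$; this is routine but must be done carefully so that no interval is counted with the wrong orientation. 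Since the statement is quoted from \cite{MaTo}, I would in practice cite that reference for the full details and only sketch the reduction to the testing condition as above.
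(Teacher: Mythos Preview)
The paper does not prove this theorem at all: it is stated as a known result with the attribution ``(see \cite{MaTo})'' and is used as a black box in the proof of Proposition~\ref{GaKo1}. Your final sentence already anticipates this, and in that sense your ``proof'' and the paper's agree --- both defer to Mart\'{\i}n-Reyes and de la Torre.

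That said, a couple of remarks on the sketch itself. The opening paragraph is off target: you write that you will ``deduce this from the known Sawyer testing theorem for the two-sided fractional maximal operator'' via the duality $w\in A_p^+\Leftrightarrow\sigma\in A_{p'}^-$, but that duality is a feature of the \emph{one-weight} classes and has no role in a genuine two-weight statement with arbitrary $v,w$; nor is there any obvious way to reduce the one-sided testing theorem to the two-sided one. The actual argument in \cite{MaTo} is a direct adaptation of Sawyer's stopping-time/principal-interval method to the one-sided geometry, not a reduction. Your second paragraph is closer in spirit to a correct proof, though it is a modern sparse-domination reformulation rather than the original 1993 argument; the notation $\langle f\rangle_{I_j}^{\alpha}$ should presumably be $|I_j|^{\alpha}\langle f\rangle_{I_j}$, and step~(4) as written is too vague --- the testing condition bounds $\|M^+_\alpha(\sigma\chi_{I_j})\|_{L^q_v}$, not the individual fractional averages, so one needs an intermediate Carleson-embedding step to see how the constant $A^+_{MT}$ actually enters the sum. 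None of this matters for the present paper, which only \emph{uses} the theorem.
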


M. Lorente and A. De la Torre (\cite{LOR2}) gave necessary and sufficient conditions for the boundness of one--sided potentials from $L^p_w(\Bbb{R})$ to $L^q_v(\Bbb{R})$ (resp. to $L^{q, \infty}(\Bbb{R})$) under the Sawyer type conditions.

\vskip+0.1cm

In what follows, ${\mathcal{N}}_{\alpha}$ is one of the operators  ${\mathcal{W}}_{\alpha}$ or ${\mathcal{R}}_{\alpha}$. Observe that if ${\mathcal{N}}^*_{\alpha}$ denotes the adjoint of ${\mathcal{N}}_{\alpha}$, then it is easy to see that ${\mathcal{N}}^*_{\alpha}= {\mathcal{R}}_{\alpha}$ if ${\mathcal{N}}_{\alpha}= {\mathcal{W}}_{\alpha}$, and viceversa.

\begin{thma}\label{Sa2}
Let $1<p\leq q<\infty$ and let $0<\alpha<1$. Suppose that $v$ and $w$ be weight functions on $\Bbb{R}$.
Then

\rm{(i)}  ${\mathcal{N}}_{\alpha}$ is bounded from $L^p_w({\Bbb{R}})$ to $L^{q, \infty}_v({\Bbb{R}})$  if and only if  there is a positive constant $C$ such that
$$ A^*_{LT}(v,w, p,q):= \sup_{I} \frac{ \Big \| {\mathcal{N}}^*_{\alpha}(v\chi_I) \Big\|_{ L^{p'}_{ w^{1-p'} } ({\Bbb{R}})} } {\big(v(I)\big)^{1/q'}}<\infty, $$
where, the supremum is taken over all intervals $I$ with $v(I)<\infty$.
Moreover, $\|{\mathcal{N}}_{\alpha}\|_{L^p_w \to L^{q, \infty}_v} \approx A^*_{LT}(v,w,p,q)$.

\rm{(ii)}  ${\mathcal{N}}_{\alpha}$ is bounded from $L^p_w({\Bbb{R}})$ to $L^{q}_v({\Bbb{R}})$ if and only if

$$\sup_{I} \frac{\| {\mathcal{N}}_{\alpha}(\sigma \chi_I)\|_{L^q_v({\Bbb{R}})}} {\big(\sigma(I)\big)^{1/p}}<\infty, $$
where the supremum is taken over all intervals $I$ with $\sigma(I)<\infty$;

$$\sup_{I} \frac{\| {\mathcal{N}}^*_{\alpha}(\sigma \chi_I)\|_{ L^{p'}_\sigma ({\Bbb{R}})}}{\big(v(I)\big)^{1/p}}<\infty, $$
where $\sigma:=w^{1-p'}$ and $v(I)<\infty$.
\end{thma}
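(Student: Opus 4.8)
The plan is to prove both parts by Sawyer's testing-condition method, adapted to the one-sided kernel; necessity is routine and sufficiency carries all the weight. For necessity in (ii), testing $\|\mathcal N_\alpha f\|_{L^q_v}\le C\|f\|_{L^p_w}$ on $f=\sigma\chi_I$ gives the first condition, since $\|\sigma\chi_I\|_{L^p_w}=\sigma(I)^{1/p}$, and the second is the same computation applied to the adjoint, using $\|\mathcal N_\alpha\|_{L^p_w\to L^q_v}=\|\mathcal N^*_\alpha\|_{L^{q'}_{v^{1-q'}}\to L^{p'}_\sigma}$ together with the fact that $\mathcal N^*_\alpha$ is again a one-sided potential. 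For necessity in (i), I would use the elementary equivalence $\|g\|_{L^{q,\infty}_v}\approx\sup_{0<v(E)<\infty}v(E)^{-1/q'}\int_E|g|\,v\,dx$: applied to $g=\mathcal N_\alpha f$ and combined with H\"{o}lder's inequality in the duality pairing of $L^p_w$ and $L^{p'}_\sigma$, it gives $\|\mathcal N_\alpha\|_{L^p_w\to L^{q,\infty}_v}\approx\sup_E v(E)^{-1/q'}\|\mathcal N^*_\alpha(v\chi_E)\|_{L^{p'}_\sigma}$, and restricting the supremum to intervals $E=I$ yields $A^*_{LT}(v,w,p,q)\le C\|\mathcal N_\alpha\|_{L^p_w\to L^{q,\infty}_v}$; the reverse inequality follows by dualizing back.

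\emph{Reduction for sufficiency.} By density of $C_c(\Bbb R)$ in $L^p_w(\Bbb R)$ and monotone convergence I would first assume $0\le f\in C_c(\Bbb R)$, so that $\mathcal N_\alpha f$ is continuous, and treat $\mathcal N_\alpha=\mathcal W_\alpha$, the operator $\mathcal R_\alpha$ being its mirror image. The place where one-sidedness is genuinely used is the tail bound: if $E_\lambda=\{\mathcal W_\alpha f>\lambda\}$ and $(a,b)$ is a connected component of this open set, then $b\notin E_\lambda$, and since $(t-x)^{1-\alpha}\ge(t-b)^{1-\alpha}$ whenever $x\le b\le t$,
\[
\int_b^\infty\frac{f(t)}{(t-x)^{1-\alpha}}\,dt\le\int_b^\infty\frac{f(t)}{(t-b)^{1-\alpha}}\,dt=\mathcal W_\alpha f(b)\le\lambda
\]
for every $x\in(a,b)$; hence $\mathcal W_\alpha f(x)\le\mathcal W_\alpha(f\chi_{(a,b)})(x)+\lambda$ on $(a,b)$, with the obvious modification when $b=\infty$. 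Consequently the level set $E_{2\lambda}$ is covered by the pairwise disjoint intervals $J$ on which $\mathcal W_\alpha(f\chi_J)>\lambda$.

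\emph{From global to local, and the main obstacle.} For (i) this reduces the claim to the uniform local weak-type bound $v(\{x\in J:\mathcal W_\alpha(f\chi_J)(x)>\lambda\})\le C\,(\|f\chi_J\|_{L^p_w}/\lambda)^q$ with $C$ depending only on $A^*_{LT}(v,w,p,q)$; summing over the disjoint $J$ and using $q\ge p$ (so that $\sum_J\bigl(\int_J f^pw\bigr)^{q/p}\le\bigl(\sum_J\int_J f^pw\bigr)^{q/p}$) then closes (i). For (ii) one instead estimates $\sum_k 2^{kq}v(E_{2^k})$ over dyadic levels; after the same component decomposition this requires a local estimate on each $J$, now of strong type, together with control of the overlap of the slightly enlarged intervals produced along the way. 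In both cases the surviving local statements are two-weight inequalities for the Riemann--Liouville-type operator $f\mapsto\mathcal W_\alpha(f\chi_J)$ on an interval, and I would prove these by a Sawyer-type stopping-time (principal/sparse interval) argument: dualize the target norm, build a Calder\'{o}n--Zygmund decomposition of the level sets of $\mathcal W_\alpha(f\chi_J)$ with respect to $\sigma$ (respectively of $\mathcal R_\alpha(v\chi_J)$ with respect to $v$), bound the principal part of the resulting sum by the $\mathcal N_\alpha$-testing condition, and bound the complementary part by the $\mathcal N^*_\alpha$-testing condition. Making this stopping time for the one-sided kernel interact correctly with \emph{both} testing conditions — and, at the weak-type endpoint of (i), with the single condition $A^*_{LT}$ alone — is the main obstacle; this is exactly the analysis carried out, in the one-sided setting, in \cite{LOR2}, which I would invoke for the technical core.
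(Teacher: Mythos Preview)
The paper does not prove this statement at all: Theorem~\ref{Sa2} is quoted from Lorente and de~la~Torre \cite{LOR2} as a known two-weight characterization, with no argument given. So there is no ``paper's own proof'' to compare against; both the paper and you defer the result to the same source.

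That said, your outline is correct and tracks the structure of the argument in \cite{LOR2}. Necessity is indeed a duality computation (your use of the Kolmogorov-type identity $\|g\|_{L^{q,\infty}_v}\approx\sup_{E}v(E)^{-1/q'}\int_E|g|\,v$ together with the $L^p_w$--$L^{p'}_\sigma$ pairing is the standard route). For sufficiency, the key one-sided observation you isolate---that on a component $(a,b)$ of $\{\mathcal W_\alpha f>\lambda\}$ the tail $\int_b^\infty f(t)(t-x)^{\alpha-1}\,dt$ is dominated by $\mathcal W_\alpha f(b)\le\lambda$---is exactly the mechanism that reduces the global estimate to local testing on intervals, and is the heart of the Lorente--de~la~Torre approach. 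Your one slightly loose phrase is ``the reverse inequality follows by dualizing back'' in part~(i): dualizing only recovers the supremum over \emph{all} measurable $E$, not over intervals, so sufficiency in~(i) genuinely needs the component decomposition you describe next, not just duality. Since you then carry out that reduction and explicitly invoke \cite{LOR2} for the stopping-time core, the sketch stands.
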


The following statements are from \cite{ED-MES-KOK} (see also $\S2.2$ of \cite{EdKoMe-Mon}) and give Gabidzashvili--Kokilashvili type two-weight criteria for one-sided potentials (we refer to  \cite{KOK-KR}, pp. 189-190 for the latter conditions in the case of Riesz potentials).


\begin{thma}\label{m21}
Let $1<p<q<\infty$ and let $0<\alpha<1$. Suppose that $v$ and $w$ be weight functions on $\Bbb{R}$. Then $\mathcal{W}_{\alpha}$ is bounded from $L^p_w({\Bbb{R}})$ to $L^{q,\infty}_v({\Bbb{R}})$ if and only if
$$[v,w]^{+}_{Glo}(p,q):=\sup\limits_{\substack{a\in\Bbb{R}\\h>0}}\Bigg(\int\limits_{a-h}^{a+h}v(t)dt\Bigg)^{1/q}\Bigg(\int\limits^{\infty}_{a+h}
(t-a)^{(\alpha-1)p'}w^{1-p'}(t)dt\Bigg)^{1/p'}<\infty.$$
Moreover, $\|\mathcal{W}_{\alpha}\|_{L^{p}_w\rightarrow L^{q,\infty}_v}\approx[v,w]_{Glo}^{+}(p,q)$.
\end{thma}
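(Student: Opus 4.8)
The plan is to prove this via a weak-type testing/duality argument of Gabidzashvili--Kokilashvili type, separating the two implications. For \emph{necessity}, suppose $\mathcal{W}_{\alpha}\colon L^p_w(\Bbb{R})\to L^{q,\infty}_v(\Bbb{R})$ is bounded. Fix $a\in\Bbb{R}$ and $h>0$. The idea is to test the operator on the function $f(t)=(t-a)^{(\alpha-1)p'}w^{1-p'}(t)\chi_{(a+h,\infty)}(t)$ (truncated to a large interval $(a+h,R)$ if necessary to ensure finiteness, then letting $R\to\infty$). For $x\in(a-h,a+h)$ one has $t-x\le t-a$ for every $t>a+h$, hence $\mathcal{W}_{\alpha}f(x)=\int_{a+h}^{\infty}(t-x)^{\alpha-1}f(t)\,dt\ge c\int_{a+h}^{\infty}(t-a)^{\alpha-1}f(t)\,dt = c\int_{a+h}^{\infty}(t-a)^{(\alpha-1)p'}w^{1-p'}(t)\,dt =: cB$. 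Choosing the level $\lambda=cB/2$ in the weak-type norm gives $\|\mathcal{W}_{\alpha}f\|_{L^{q,\infty}_v}\ge \tfrac{cB}{2}\,v((a-h,a+h))^{1/q}$, while $\|f\|_{L^p_w}^p=\int_{a+h}^{\infty}(t-a)^{(\alpha-1)p'p}w^{(1-p')p}w\,dt = \int_{a+h}^{\infty}(t-a)^{(\alpha-1)p'}w^{1-p'}(t)\,dt = B$, using $(1-p')p+1=0$ and $(\alpha-1)p'p\cdot\frac1{p}$... more precisely $(\alpha-1)p'p$ should read $(\alpha-1)p'$ after collecting the exponent of $(t-a)$ as $(\alpha-1)p'(p-1)+0$; in any case the exponent bookkeeping yields $\|f\|_{L^p_w}=B^{1/p}$. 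Combining, $cB\,v((a-h,a+h))^{1/q}\lesssim \|\mathcal{W}_{\alpha}\|\,B^{1/p}$, i.e. $v((a-h,a+h))^{1/q}\,B^{1/p'}\lesssim \|\mathcal{W}_{\alpha}\|$, which is exactly $[v,w]^+_{Glo}(p,q)\lesssim \|\mathcal{W}_{\alpha}\|_{L^p_w\to L^{q,\infty}_v}$.

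For \emph{sufficiency}, assume $[v,w]^+_{Glo}(p,q)=:N<\infty$. Here I would follow the standard dyadic/Calder\'on--Zygmund-type decomposition adapted to the one-sided setting: for $\lambda>0$ decompose the level set $\{\mathcal{W}_{\alpha}f>\lambda\}$ into a union of maximal intervals on which suitable averages of $|f|$ against the kernel exceed $\lambda$, and control $v$ of each such interval by pairing the local $v$-mass against the tail integral of $w^{1-p'}$, which is precisely what the $[v,w]^+_{Glo}$ condition bounds. Concretely, on each stopping interval $I=(a-h,a+h)$ produced by the selection, one estimates $\lambda \lesssim (t-a)^{\alpha-1}$-weighted average of $|f|$, applies H\"older with exponents $p,p'$ splitting off $w^{1/p}w^{-1/p}$, and obtains $\lambda\, (\text{something}) \le \big(\int_{a+h}^\infty(t-a)^{(\alpha-1)p'}w^{1-p'}\big)^{1/p'}\|f\chi_{(a+h,\infty)}\|_{L^p_w}$; multiplying by $v(I)^{1/q}$ and invoking $N$ gives $\lambda\, v(I)^{1/q}\lesssim N\|f\chi_{2I}\|_{L^p_w}$ (roughly), and summing over the disjoint stopping intervals with an $\ell^q$--$\ell^p$ embedding (valid since $q>p$, which is where the strict inequality $p<q$ is used) yields $\lambda\, v(\{\mathcal{W}_{\alpha}f>\lambda\})^{1/q}\lesssim N\|f\|_{L^p_w}$. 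An alternative cleaner route is to cite Theorem \ref{Sa2}(i): it suffices to verify $A^*_{LT}(v,w,p,q)=\sup_I \|\mathcal{R}_{\alpha}(v\chi_I)\|_{L^{p'}_{w^{1-p'}}}/v(I)^{1/q'}<\infty$, and one shows directly that $A^*_{LT}(v,w,p,q)\approx [v,w]^+_{Glo}(p,q)$ by a dyadic decomposition of $\mathcal{R}_{\alpha}(v\chi_I)$ into annular pieces $2^{-j}h$ around the endpoint of $I$ — this reduces the whole theorem to a bookkeeping lemma comparing two suprema.

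The main obstacle I anticipate is the sufficiency direction, specifically organizing the covering/stopping-time argument so that the selected intervals are disjoint (or have bounded overlap) and so that the passage from the local estimates to the global weak-type bound genuinely exploits $q>p$ via an $\ell^p\hookrightarrow\ell^q$ inequality; one must also be careful that the kernel $(t-x)^{\alpha-1}$ is only ``essentially constant'' on dyadic annuli, so the decomposition of $(a+h,\infty)$ into pieces $(a+2^{j}h, a+2^{j+1}h)$ is needed and the tail integral $\int_{a+h}^\infty$ must be split accordingly, summing a geometric-type series in $j$. If instead one goes through Theorem \ref{Sa2}(i), the obstacle shifts to proving $A^*_{LT}\approx [v,w]^+_{Glo}$ cleanly, but that comparison is purely a one-variable estimate on $\mathcal{R}_{\alpha}(v\chi_I)$ and is more transparent; I would present the proof that way if the annular estimate $\|\mathcal{R}_{\alpha}(v\chi_I)\|_{L^{p'}_{w^{1-p'}}}^{p'}\approx \sum_j \big(\int_{I_j}v\big)^{?}\cdots$ can be made to collapse to a single global supremum, which it does because the integrability forces the tail sums to be dominated by their largest dyadic block.
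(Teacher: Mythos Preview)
This theorem is not proved in the paper: it is quoted as a known result from \cite{ED-MES-KOK} (see also \S2.2 of \cite{EdKoMe-Mon}) and used as a black box, so there is no in-paper proof to compare your proposal against.

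On the proposal itself: your necessity argument is the standard duality test and is correct in outline, but the test function should carry exponent $(\alpha-1)(p'-1)$, not $(\alpha-1)p'$; with $f(t)=(t-a)^{(\alpha-1)(p'-1)}w^{1-p'}(t)\chi_{(a+h,R)}(t)$ the bookkeeping you flagged as messy goes through cleanly (one gets $\|f\|_{L^p_w}^p=B$ and $\mathcal{W}_\alpha f\gtrsim B$ on $(a-h,a+h)$). Also, the pointwise claim ``$t-x\le t-a$'' fails for $x<a$; the correct bound is $t-x\le 2(t-a)$ for $x\in(a-h,a+h)$ and $t>a+h$, which suffices.

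For sufficiency, your ``alternative cleaner route'' through Theorem~\ref{Sa2}(i) is not a genuine shortcut. Since that theorem already gives $\|\mathcal{W}_\alpha\|_{L^p_w\to L^{q,\infty}_v}\approx A^*_{LT}(v,w,p,q)$, proving $A^*_{LT}\lesssim [v,w]^+_{Glo}$ is \emph{equivalent} to the sufficiency you are trying to establish, not a reduction of it: bounding $\|\mathcal{R}_\alpha(v\chi_I)\|_{L^{p'}_{w^{1-p'}}}$ is a full $L^{p'}$-norm estimate, not a supremum comparison, and the annular decomposition you describe is essentially the same work as the direct approach. Your first route (a covering/stopping-time argument on $\{\mathcal{W}_\alpha f>\lambda\}$, H\"older against the tail weight, then $\ell^p\hookrightarrow\ell^q$ using $q>p$) is the correct strategy and is what the cited references carry out; but the key step --- selecting a disjoint family of intervals and controlling the \emph{local} part of $\mathcal{W}_\alpha f$ on each so that only the tail matched by the $[v,w]^+_{Glo}$ condition survives --- is precisely where the one-sidedness and the strict inequality $p<q$ enter, and your sketch does not yet supply it.
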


\begin{thma}\label{m20}
Let $1<p<q<\infty$ and let $0<\alpha<1$. Suppose that $v$ and $w$ be weight functions on $\Bbb{R}$. Then $\mathcal{R}_{\alpha}$ is bounded from $L^p_w({\Bbb{R}})$ to $L^{q,\infty}_v({\Bbb{R}})$ if and only if
$$[v,w]^{-}_{Glo}(p,q):=\sup\limits_{\substack{a\in\Bbb{R}\\h>0}}\Bigg(\int\limits_{a-h}^{a+h}v(t)dt\Bigg)^{1/q}\Bigg(\int\limits_{-\infty}^{a-h}
(a-t)^{(\alpha-1)p'}w^{1-p'}(t)dt\Bigg)^{1/p'}<\infty.$$
Moreover, $\|\mathcal{R}_{\alpha}\|_{L^{p}_w\rightarrow L^{q,\infty}_v}\approx[v,w]_{Glo}^{-}(p,q)$.
\end{thma}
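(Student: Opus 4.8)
The plan is to deduce Theorem \ref{m20} directly from Theorem \ref{m21} by a reflection argument, since $\mathcal{R}_{\alpha}$ is just the conjugate of $\mathcal{W}_{\alpha}$ under the map $x\mapsto -x$. Let $R$ denote the reflection operator $(Rg)(x)=g(-x)$. A one-line change of variables $s=-t$ in the defining integral shows that for every locally integrable $f$,
$$\mathcal{W}_{\alpha}(Rf)(x)=\int\limits_{x}^{\infty}\frac{f(-t)}{(t-x)^{1-\alpha}}\,dt=\int\limits_{-\infty}^{-x}\frac{f(s)}{(-x-s)^{1-\alpha}}\,ds=(\mathcal{R}_{\alpha}f)(-x),$$
that is, $\mathcal{W}_{\alpha}R=R\mathcal{R}_{\alpha}$, hence $\mathcal{R}_{\alpha}=R\mathcal{W}_{\alpha}R$. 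I would record this identity first, as it is the backbone of everything that follows.

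Next, since Lebesgue measure is invariant under $R$, for any weight $u$ and any exponent $r$ one has $\|Rg\|_{L^{r}_{u}(\Bbb{R})}=\|g\|_{L^{r}_{Ru}(\Bbb{R})}$, and the same holds for the weak quasinorms $L^{r,\infty}_{u}(\Bbb{R})$. Writing $\tilde v=Rv$ and $\tilde w=Rw$, this gives
$$\|\mathcal{R}_{\alpha}f\|_{L^{q,\infty}_{v}(\Bbb{R})}=\|\mathcal{W}_{\alpha}(Rf)\|_{L^{q,\infty}_{\tilde v}(\Bbb{R})},\qquad \|f\|_{L^{p}_{w}(\Bbb{R})}=\|Rf\|_{L^{p}_{\tilde w}(\Bbb{R})},$$
so that $\mathcal{R}_{\alpha}\colon L^{p}_{w}(\Bbb{R})\to L^{q,\infty}_{v}(\Bbb{R})$ is bounded with norm $N$ if and only if $\mathcal{W}_{\alpha}\colon L^{p}_{\tilde w}(\Bbb{R})\to L^{q,\infty}_{\tilde v}(\Bbb{R})$ is bounded with the same norm $N$. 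Applying Theorem \ref{m21} to the pair $(\tilde v,\tilde w)$, the latter holds iff $[\tilde v,\tilde w]^{+}_{Glo}(p,q)<\infty$, with $N\approx[\tilde v,\tilde w]^{+}_{Glo}(p,q)$. Finally I would verify $[\tilde v,\tilde w]^{+}_{Glo}(p,q)=[v,w]^{-}_{Glo}(p,q)$: in the supremum defining the former, replace the centre $a$ by $-a$ and substitute $s=-t$ in both factors; the interval $(a-h,a+h)$ is sent to itself with $v$ replacing $\tilde v$, while $\int_{-a+h}^{\infty}(t+a)^{(\alpha-1)p'}\tilde w^{1-p'}(t)\,dt$ turns into $\int_{-\infty}^{a-h}(a-s)^{(\alpha-1)p'}w^{1-p'}(s)\,ds$. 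This yields both the characterization and the equivalence $\|\mathcal{R}_{\alpha}\|_{L^{p}_{w}\to L^{q,\infty}_{v}}\approx[v,w]^{-}_{Glo}(p,q)$.

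The only genuinely substantive ingredient is Theorem \ref{m21}, which we may assume; everything else is bookkeeping, and the sole point needing a little care is matching the domains of integration correctly under the reflection — this is the ``main obstacle'' only in a very mild sense. If one insisted on arguing directly, without Theorem \ref{m21}, the real work would be \emph{sufficiency}: for each level $\lambda$ one would split $\mathcal{R}_{\alpha}f$ on the level set into a local part, controlled by the weighted weak-type bound for the one-sided fractional maximal operator $M^{-}_{\alpha}$ via Theorem \ref{Sa1}, and a global part, which one estimates by a dyadic discretisation together with a Hardy-type inequality governed precisely by $[v,w]^{-}_{Glo}(p,q)$; \emph{necessity} would then follow by testing the weak-type inequality on functions supported in $(-\infty,a-h)$ built from $w^{1-p'}$. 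But the reflection route renders all of this unnecessary.
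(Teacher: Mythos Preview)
Your reflection argument is correct: the identity $\mathcal{R}_{\alpha}=R\,\mathcal{W}_{\alpha}R$ is right, the transfer of (weak) Lebesgue norms under $R$ is immediate, and your change of variables $a\mapsto -a$, $t\mapsto -t$ does turn $[\tilde v,\tilde w]^{+}_{Glo}(p,q)$ into $[v,w]^{-}_{Glo}(p,q)$ exactly. So the deduction of Theorem~\ref{m20} from Theorem~\ref{m21} is complete.

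As for comparison with the paper: there is nothing to compare against. The paper does not prove Theorem~\ref{m20}; it is stated in the lettered ``Theorem'' environment reserved for quoted results and is attributed, together with Theorems~\ref{m21}, \ref{m23} and \ref{m22}, to \cite{ED-MES-KOK} (see also \cite{EdKoMe-Mon}, \S2.2). The paper simply uses these statements as black boxes. Your reflection reduction is therefore not an alternative to the paper's argument but rather a self-contained justification that the paper omits; it is exactly the kind of symmetry argument one would expect the cited references to use (or that makes proving the $\mathcal{R}_{\alpha}$ case separately unnecessary once the $\mathcal{W}_{\alpha}$ case is in hand). The closing paragraph sketching a direct proof via Theorem~\ref{Sa1} and a dyadic/Hardy-type estimate is plausible in spirit but unneeded here, and you are right to discard it in favour of the reflection.
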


\begin{thma}\label{m23}
Let $1<p<q<\infty$ and let $0<\alpha<1$. Suppose that $v$ and $w$ be weight functions on $\Bbb{R}$. Then $\mathcal{W}_{\alpha}$ is bounded from $L^p_w({\Bbb{R}})$ to $L^{q}_v({\Bbb{R}})$ if and only if\\
{\rm (i)} $[v,w]^{+}_{Glo}(p,q)<\infty$.\\
{\rm (ii)}\begin{equation*}\label{GK+}
A^{+}_{GK}(v,w,p,q):=\sup\limits_{\substack{a\in\Bbb{R}\\h>0}}
\Bigg(\int\limits_{a-h}^{a+h}w^{1-p'}(t)dt\Bigg)^{1/p'}\Bigg(\int\limits_{-\infty}^{a-h}
\frac{v(y)}{(a-y)^{(1-\alpha)q}}dy\Bigg)^{1/q}<\infty.
\end{equation*}
Moreover, $\|\mathcal{W}_{\alpha}\|_{L^{p}_w\rightarrow L^{q}_v}\approx[v,w]_{Glo}^{+}(p,q)+A^{+}_{GK}(v,w,p,q)$.
\end{thma}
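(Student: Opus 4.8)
The plan is to prove Theorem \ref{m23} by reducing the boundedness of $\mathcal{W}_{\alpha}$ from $L^p_w$ to $L^q_v$ to the weak-type boundedness already characterized in Theorem \ref{m21}, together with a dual weak-type estimate for the adjoint operator $\mathcal{R}_{\alpha}$. Since $1<p<q<\infty$, the strong-type $(p,q)$ inequality is, via a standard duality/interpolation-with-change-of-measure argument, equivalent to the conjunction of the weak-type $(p,q)$ inequality for $\mathcal{W}_\alpha$ (from $L^p_w$ to $L^{q,\infty}_v$) and the weak-type $(q',p')$ inequality for $\mathcal{R}_\alpha = \mathcal{W}_\alpha^*$ (from $L^{q'}_{v^{1-q'}}$ to $L^{p',\infty}_{w^{1-p'}}$). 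The first ingredient is exactly Theorem \ref{m21}, which supplies the condition $[v,w]^+_{Glo}(p,q)<\infty$ and the norm equivalence $\|\mathcal{W}_\alpha\|_{L^p_w\to L^{q,\infty}_v}\approx [v,w]^+_{Glo}(p,q)$. The second ingredient, by Theorem \ref{m20} applied to $\mathcal{R}_\alpha$ with exponents $q'<p'$ and weights $v^{1-q'}$, $w^{1-p'}$, is governed by $[v^{1-q'},w^{1-p'}]^-_{Glo}(q',p')<\infty$; the point is that after substituting $\sigma=w^{1-p'}$ and simplifying the exponents (using $\alpha-1$ in the kernel and the relations between $p,p',q,q'$), this quantity is precisely $A^+_{GK}(v,w,p,q)$. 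Thus the two-sided estimate $\|\mathcal{W}_\alpha\|_{L^p_w\to L^q_v}\approx [v,w]^+_{Glo}(p,q)+A^+_{GK}(v,w,p,q)$ follows.

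Concretely, first I would record the duality principle: for $1<p\le q<\infty$ and a positive operator $T$ with kernel, $T$ is bounded $L^p_w\to L^q_v$ if and only if $T$ is bounded $L^p_w\to L^{q,\infty}_v$ \emph{and} $T^*$ is bounded $L^{q'}_{v^{1-q'}}\to L^{p',\infty}_{w^{1-p'}}$, with the product of the two weak norms comparable to the strong norm. This is classical for positive operators (it is implicitly the mechanism behind the Gabidzashvili--Kokilashvili criteria; one direction is trivial, the other uses that a weak-$(p,q)$ bound plus a dual weak bound can be upgraded by interpolation since $p<q$ leaves room). Since $\mathcal{W}_\alpha$ has the nonnegative kernel $(t-x)^{\alpha-1}\chi_{\{t>x\}}$ and $\mathcal{W}_\alpha^*=\mathcal{R}_\alpha$ as noted before Theorem \ref{Sa2}, this applies directly.

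Second, I would invoke Theorem \ref{m21} verbatim to get that the weak-type bound for $\mathcal{W}_\alpha$ holds iff $[v,w]^+_{Glo}(p,q)<\infty$, contributing condition (i). Third, I would apply Theorem \ref{m20} to $\mathcal{R}_\alpha$ with the exponent pair $(q',p')$ (valid since $1<q'<p'<\infty$) and weights $V:=v^{1-q'}$ (in the role of the "$v$" of Theorem \ref{m20}) and $W:=w^{1-p'}$ (in the role of "$w$"), to conclude boundedness iff $[V,W]^-_{Glo}(q',p')<\infty$. The remaining task is a bookkeeping computation: write out
$$[V,W]^-_{Glo}(q',p') = \sup_{a,h}\Bigl(\int_{a-h}^{a+h} v^{1-q'}\Bigr)^{1/q'}\Bigl(\int_{-\infty}^{a-h}(a-t)^{(\alpha-1)(p')'} (w^{1-p'})^{1-(p')'}dt\Bigr)^{1/(p')'},$$
and simplify using $(q')'=q$, $(p')'=p$, $(w^{1-p'})^{1-p}=w$, so that the second factor becomes $\bigl(\int_{-\infty}^{a-h}(a-t)^{(\alpha-1)p} w\,dt\bigr)^{1/p}$ — wait, I must be careful with which exponent sits on which weight; the correct matching, after also dualizing the first factor (replacing $v^{1-q'}$ integral by a $v$-integral via the reciprocal exponent in the $A_{p,q}$-type balance), is exactly $A^+_{GK}(v,w,p,q)$ as displayed in (ii). The main obstacle — and the step I would spend the most care on — is precisely this exponent/weight bookkeeping: verifying that the "$\sigma$" appearing in Theorem \ref{m20}'s formula for $\mathcal{R}_\alpha$, once we set $\sigma = w^{1-p'}$ and dualize, recombines into $\bigl(\int w^{1-p'}\bigr)^{1/p'}$ times $\bigl(\int v\,(a-y)^{-(1-\alpha)q}\bigr)^{1/q}$ without any leftover powers. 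Everything else (the duality principle, citing the two cited theorems) is routine once the algebra is confirmed.
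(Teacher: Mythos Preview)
Your approach is essentially identical to the paper's: the paper does not give a detailed proof but states in a remark that Theorem~\ref{m23} follows from the weak-type Theorems~\ref{m21} and~\ref{m20} together with the Sawyer criteria (Theorem~\ref{Sa2}), and records precisely your duality decomposition
\[
\|\mathcal{W}_{\alpha}\|_{L^{p}_w\to L^{q}_v}\approx\|\mathcal{W}_{\alpha}\|_{L^{p}_w\to L^{q,\infty}_v}+\|\mathcal{R}_{\alpha}\|_{L^{q'}_{v^{1-q'}}\to L^{p',\infty}_{w^{1-p'}}}.
\]
Regarding the bookkeeping you flagged as the main obstacle: you have the roles swapped. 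In applying Theorem~\ref{m20} to $\mathcal{R}_\alpha:L^{q'}_{v^{1-q'}}\to L^{p',\infty}_{w^{1-p'}}$, the \emph{target} weight $w^{1-p'}$ plays the role of ``$v$'' and the \emph{source} weight $v^{1-q'}$ plays the role of ``$w$'', so the relevant quantity is $[w^{1-p'},v^{1-q'}]^{-}_{Glo}(q',p')$; using $(q')'=q$ and $(v^{1-q'})^{1-q}=v$ this equals $A^{+}_{GK}(v,w,p,q)$ on the nose (the paper confirms this identity explicitly in the proof of Proposition~\ref{GaKo1}). Also, ``product of the two weak norms'' should read ``sum''.
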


\begin{thma}\label{m22}
Let $1<p<q<\infty$ and let $0<\alpha<1$. Suppose that $v$ and $w$ be weight functions on $\Bbb{R}$. Then $\mathcal{R}_{\alpha}$ is bounded from $L^p_w({\Bbb{R}})$ to $L^{q}_v({\Bbb{R}})$ if and only if\\
{\rm (i)} $[v,w]^{-}_{Glo}(p,q)<\infty$.\\

\noindent{\rm (ii)}
\begin{equation*}\label{GK-}
A^{-}_{GK}(v,w,p,q):=\sup\limits_{\substack{a\in\Bbb{R}\\h>0}}\Bigg(\int\limits_{a-h}^{a+h}w^{1-p'}(t)dt\Bigg)^{1/p'}\Bigg(\int\limits^{\infty}_{a+h}
\frac{v(y)}{(y-a)^{(1-\alpha)q}}dy\Bigg)^{1/q}<\infty.
\end{equation*}
Moreover, $\|\mathcal{R}_{\alpha}\|_{L^{p}_w\rightarrow L^{q}_v}\approx[v,w]_{Glo}^{-}(p,q)+ A^{-}_{GK}(v,w,p,q)$.
\end{thma}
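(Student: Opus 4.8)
The plan is to deduce the statement from its already-established ``right'' counterpart, Theorem~\ref{m23}, by a reflection. Let $R$ be the reflection operator $(Rf)(x):=f(-x)$, and for a weight $u$ put $u^-(x):=u(-x)$. The substitution $t\mapsto -t$ in the defining integral gives, for every locally integrable $f$,
\begin{equation*}
(\mathcal{R}_\alpha f)(x)=\int_{-\infty}^{x}\frac{f(t)}{(x-t)^{1-\alpha}}\,dt=\int_{-x}^{\infty}\frac{(Rf)(s)}{\bigl(s-(-x)\bigr)^{1-\alpha}}\,ds=\bigl(R\,\mathcal{W}_\alpha\,Rf\bigr)(x),
\end{equation*}
so that $\mathcal{R}_\alpha=R\,\mathcal{W}_\alpha\,R$. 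Since $R$ is an isometric isomorphism of $L^p_w(\Bbb{R})$ onto $L^p_{w^-}(\Bbb{R})$ and of $L^q_v(\Bbb{R})$ onto $L^q_{v^-}(\Bbb{R})$, and $R^2=\mathrm{id}$, the operator $\mathcal{R}_\alpha\colon L^p_w(\Bbb{R})\to L^q_v(\Bbb{R})$ is bounded if and only if $\mathcal{W}_\alpha\colon L^p_{w^-}(\Bbb{R})\to L^q_{v^-}(\Bbb{R})$ is bounded, and the two operator norms are equal.

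Next I would apply Theorem~\ref{m23} to $\mathcal{W}_\alpha$ with the weights $v^-,w^-$: that boundedness holds iff $[v^-,w^-]^+_{Glo}(p,q)<\infty$ and $A^+_{GK}(v^-,w^-,p,q)<\infty$, with $\|\mathcal{W}_\alpha\|_{L^p_{w^-}\to L^q_{v^-}}\approx[v^-,w^-]^+_{Glo}(p,q)+A^+_{GK}(v^-,w^-,p,q)$. It then remains to identify these quantities with the ``minus'' characteristics of the theorem. Substituting $t\mapsto -t$ and replacing the center $a$ by $-a$ in the definition of $[\,\cdot\,]^+_{Glo}$ one obtains $[v^-,w^-]^+_{Glo}(p,q)=[v,w]^-_{Glo}(p,q)$, since $\int_{a-h}^{a+h}v^-=\int_{-a-h}^{-a+h}v$ and $\int_{a+h}^{\infty}(t-a)^{(\alpha-1)p'}(w^-)^{1-p'}(t)\,dt=\int_{-\infty}^{-a-h}(-a-t)^{(\alpha-1)p'}w^{1-p'}(t)\,dt$; the same change of variables applied to $A^+_{GK}$ gives $A^+_{GK}(v^-,w^-,p,q)=A^-_{GK}(v,w,p,q)$. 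Combining this with the previous paragraph yields both the characterization and the equivalence $\|\mathcal{R}_\alpha\|_{L^p_w\to L^q_v}\approx[v,w]^-_{Glo}(p,q)+A^-_{GK}(v,w,p,q)$. (As a consistency check, the same reflection carries Theorem~\ref{m21} onto Theorem~\ref{m20}.)

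With this approach there is essentially no conceptual obstacle: the only work is the change-of-variable bookkeeping in the last paragraph, which is routine once one is careful that the reflection interchanges the ``left'' and ``right'' long-range geometry and acts correctly on the reflected weights. If one wanted a self-contained proof that did not invoke Theorem~\ref{m23}, one would instead run the Andersen--Sawyer/Lorente--de la Torre scheme directly, splitting $\mathcal{R}_\alpha$ into a ``local'' part handled on a Whitney-type decomposition of each level set by a Sawyer-type testing argument (cf.\ Theorem~\ref{Sa2}) and a ``global'' long-range part that reduces to a weighted Hardy inequality governed by $[v,w]^-_{Glo}(p,q)$, with necessity obtained by inserting $f=\sigma\chi_I$ and $f=\chi_{(a-h,a+h)}$-type test functions; in that route the delicate point would be the sufficiency of the global condition for the long-range component, but the reflection argument above absorbs precisely that difficulty into the cited theorem.
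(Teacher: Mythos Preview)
Your reflection argument is correct: the identity $\mathcal{R}_\alpha=R\,\mathcal{W}_\alpha\,R$ together with the fact that $R$ is an isometric isomorphism $L^r_u\to L^r_{u^-}$ reduces the statement to Theorem~\ref{m23}, and the change-of-variable computations identifying $[v^-,w^-]^+_{Glo}$ with $[v,w]^-_{Glo}$ and $A^+_{GK}(v^-,w^-,p,q)$ with $A^-_{GK}(v,w,p,q)$ are accurate.

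The paper does not actually prove this theorem in the body of the text; Theorems~\ref{m21}--\ref{m22} are quoted from \cite{ED-MES-KOK} and \cite{EdKoMe-Mon}, and the only indication given (the Remark following Theorem~\ref{m22}) is that the strong-type statements are obtained by combining the weak-type characterizations (Theorems~\ref{m21} and \ref{m20}) with the Sawyer-type two-weight criteria of Theorem~\ref{Sa2}. Your approach is genuinely different and more economical in this context: rather than rerunning the weak-type $+$ testing machinery for $\mathcal{R}_\alpha$, you transport the finished result for $\mathcal{W}_\alpha$ across the reflection. The paper's route has the advantage of being self-contained (it does not presuppose Theorem~\ref{m23}) and of exhibiting the structural reason why two conditions appear, namely one from each of the weak-type and dual-testing pieces; your route has the advantage of being a two-line reduction once Theorem~\ref{m23} is in hand, and it makes the symmetry between the Weyl and Riemann--Liouville cases completely transparent. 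Your final paragraph already anticipates the paper's sketch, so you are aware of both strategies.
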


\begin{rem} The strong type statements (see Theorems \ref{m23} and \ref{m22}) follow from appropriate weak type results (see Theorems \ref{m21} and \ref{m20}) and the Sawyer type two-weight criteria (see Theorem \ref{Sa2}). We refer to $\S2.2$ of \cite{EdKoMe-Mon} for details.
\end{rem}

\vskip+0.2cm

\begin{rem} It is easy to see that,\\
(i) \begin{equation}\label{m33}
\|\mathcal{R}_{\alpha}\|_{L^{p}_w\rightarrow L^{q}_v}\approx\|\mathcal{R}_{\alpha}\|_{L^{p}_w\rightarrow L^{q,\infty}_v}+\|\mathcal{W}_{\alpha}\|_{L^{q'}_{v^{1-q'}}\rightarrow L^{p',\infty}_{w^{1-p'}}};
\end{equation}
(ii)\begin{equation}\label{m34}
\|\mathcal{W}_{\alpha}\|_{L^{p}_w\rightarrow L^{q}_v}\approx\|\mathcal{W}_{\alpha}\|_{L^{p}_w\rightarrow L^{q,\infty}_v}+\|\mathcal{R}_{\alpha}\|_{L^{q'}_{v^{1-q'}}\rightarrow L^{p',\infty}_{w^{1-p'}}}.
\end{equation}
\end{rem}

We need similar criteria for one--sided fractional maximal operators defined on ${\Bbb{R}}$ (cf. \cite{Wh}).

\begin{prop}\label{GaKo1} Suppose that $1<p<q<\infty$ and that $0<\alpha<1$. Then

\rm{(i)} $M^+_{\alpha}$ is bounded from $L^p_w({\Bbb{R}})$ to $L^q_v({\Bbb{R}})$ if and only if \\$A^{+}_{GK}(v,w,p,q)<\infty$ (see Theorem \ref{m23}).

Moreover, $\|M_{\alpha}^+\|_{L^{p}_w\rightarrow L^{q}_v}\approx A^+_{GK}(v,w,p,q)$.

\rm{(ii)} $M^-_{\alpha}$ is bounded from $L^p_w({\Bbb{R}})$ to $L^q_v({\Bbb{R}})$ if and only if \\$A^{-}_{GK}(v,w,p,q)<\infty$ (see Theorem \ref{m22}).

Moreover, $ \| M^-_{\alpha} \|_{ L^{p}_w \rightarrow L^{q}_v }\approx A^-_{GK}(v,w,p,q)$.
\end{prop}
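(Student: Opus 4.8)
The plan is to prove part (i); part (ii) will follow from it by the reflection $x\mapsto -x$, which interchanges $M^+_\alpha$ with $M^-_\alpha$ and $A^+_{GK}(v,w,p,q)$ with $A^-_{GK}$ of the reflected weights. Throughout write $\sigma=w^{1-p'}$, so that $\sigma^p w=\sigma$ and hence $\|\sigma\chi_I\|_{L^p_w(\Bbb{R})}=\sigma(I)^{1/p}$ for every interval $I$, where $\sigma(I)=\int_I w^{1-p'}$. Since Theorem \ref{Sa1} already says that $M^+_\alpha$ is bounded from $L^p_w(\Bbb{R})$ to $L^q_v(\Bbb{R})$ precisely when the Sawyer testing quantity $A^+_{MT}(v,w,p,q)$ is finite, with $\|M^+_\alpha\|_{L^p_w\to L^q_v}\approx A^+_{MT}(v,w,p,q)$, the whole statement (including the ``Moreover'' part) reduces to the claim
$$A^+_{MT}(v,w,p,q)\approx A^+_{GK}(v,w,p,q),\qquad 1<p<q<\infty .$$

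For the bound $A^+_{GK}(v,w,p,q)\ll A^+_{MT}(v,w,p,q)$ (which in particular gives the necessity of $A^+_{GK}<\infty$) I would test on $f=\sigma\chi_I$ with $I=(a-h,a+h)$. If $x<a-h$, then choosing the averaging window $(x,a+h)$ gives $M^+_\alpha(\sigma\chi_I)(x)\ge (a+h-x)^{\alpha-1}\sigma(I)\ge c\,(a-x)^{\alpha-1}\sigma(I)$, since $a-x\le a+h-x\le 2(a-x)$. Raising to the $q$-th power, integrating against $v$ over $(-\infty,a-h)$ and dividing by $\|\sigma\chi_I\|_{L^p_w}=\sigma(I)^{1/p}$ yields
$$A^+_{MT}(v,w,p,q)\approx\|M^+_\alpha\|_{L^p_w\to L^q_v}\gg \sigma(I)^{1/p'}\Big(\int_{-\infty}^{a-h}(a-x)^{(\alpha-1)q}v(x)\,dx\Big)^{1/q},$$
and taking the supremum over all $a\in\Bbb{R}$, $h>0$ gives $A^+_{GK}(v,w,p,q)\ll A^+_{MT}(v,w,p,q)$.

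For the reverse bound $A^+_{MT}(v,w,p,q)\ll A^+_{GK}(v,w,p,q)$ I would fix $I=(c,d)$ and estimate $\|M^+_\alpha(\sigma\chi_I)\|_{L^q_v}$, splitting the integration at $c$ (note that $M^+_\alpha(\sigma\chi_I)$ vanishes on $[d,\infty)$). On $(-\infty,c)$ one has the explicit formula $M^+_\alpha(\sigma\chi_I)(x)=\sup_{c<t\le d}(t-x)^{\alpha-1}\sigma((c,t))$; choosing points $c<\cdots<t_{k+1}<t_k<\cdots<t_0=d$ with $\sigma((c,t_k))\approx 2^{-k}\sigma(I)$, one gets $M^+_\alpha(\sigma\chi_I)(x)\le \sup_{k\ge0}2^{-k}\sigma(I)\,(t_{k+1}-x)^{\alpha-1}$, and applying $A^+_{GK}(v,w,p,q)$ to the interval $(c,t_{k+1})$ (using that for $x<c$ the factor $(a-x)^{-(1-\alpha)q}$ with $a$ the midpoint of $(c,t_{k+1})$ dominates $(t_{k+1}-x)^{-(1-\alpha)q}$) bounds this ``far'' part by $c\,\sigma(I)^{q/p}(A^+_{GK})^q$, the ensuing geometric series $\sum_k 2^{-kq/p}$ being summable. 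On $I$ itself I would run a one-sided Calderón–Zygmund decomposition of the level sets $\{x\in I:M^+_\alpha(\sigma\chi_I)(x)>2^k\}$ into open components; on each component the defining inequality furnishes a subinterval of $I$ carrying a fixed proportion of the local $\sigma$-mass, so that after Hölder's inequality on that subinterval each contribution is controlled by $A^+_{GK}(v,w,p,q)$, and the family of components can be arranged to have bounded overlap. Summing over scales is then closed off by $\sum a_j^{q/p}\le\big(\sum a_j\big)^{q/p}$, valid exactly because $q\ge p$, which gives the ``local'' part $\ll \sigma(I)^{q/p}(A^+_{GK})^q$ as well; adding the two pieces yields $\|M^+_\alpha(\sigma\chi_I)\|_{L^q_v}\ll A^+_{GK}(v,w,p,q)\,\sigma(I)^{1/p}$, i.e. $A^+_{MT}\ll A^+_{GK}$.

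The main obstacle is the local estimate on $I$: arranging a one-sided covering of the level sets of $M^+_\alpha(\sigma\chi_I)$ with controlled overlap in such a way that the single-bump quantity $A^+_{GK}$ — which pairs the $\sigma$-mass of an interval with a $v$-tail lying to its \emph{left} — genuinely dominates each term, and that the Sawyer–Wheeden summation goes through; the far part and the lower bound $A^+_{GK}\ll A^+_{MT}$ are routine once the explicit form of $M^+_\alpha(\sigma\chi_I)$ off $I$ is used. As an alternative to the sufficiency argument one may note the identity $A^+_{GK}(v,w,p,q)=[\,w^{1-p'},v^{1-q'}\,]^{-}_{Glo}(q',p')$ (a short computation using $pp'=p+p'$, $qq'=q+q'$) and then feed the weak-type bound for $\mathcal R_\alpha$ from Theorem \ref{m20} into a comparison with $M^+_\alpha$, but the passage back to the strong-type bound for the maximal operator again rests on a covering argument of the same kind.
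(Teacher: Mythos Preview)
Your necessity argument (the lower bound $A^+_{GK}\ll A^+_{MT}$ via testing on $\sigma\chi_{(a-h,a+h)}$) is fine and is what the paper calls ``the standard way choosing appropriate test functions.''

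For the sufficiency, however, your direct Sawyer--Wheeden style argument is left genuinely incomplete at the local part on $I$, as you yourself flag. More importantly, the ``alternative'' you mention in your last paragraph is precisely the paper's route, and your reason for setting it aside is mistaken: no new covering argument is required. The paper argues as follows. Write $\sigma=w^{1-p'}$. From the identity $A^+_{GK}(v,w,p,q)=[\sigma,v^{1-q'}]^-_{Glo}(q',p')$ and Theorem~\ref{m20} one gets
\[
\|\mathcal R_\alpha\|_{L^{q'}_{v^{1-q'}}\to L^{p',\infty}_{\sigma}}\approx A^+_{GK}(v,w,p,q).
\]
Now apply Theorem~\ref{Sa2}(i) with $\mathcal N_\alpha=\mathcal R_\alpha$ (so $\mathcal N_\alpha^*=\mathcal W_\alpha$) to this weak-type bound: it says that the weak-type norm is $\approx A^*_{LT}(\sigma,v^{1-q'},q',p')$, and unwinding the definition (using $(v^{1-q'})^{1-q}=v$ and $(p')'=p$) this quantity is exactly
\[
\sup_I \frac{\|\mathcal W_\alpha(\sigma\chi_I)\|_{L^q_v(\Bbb R)}}{\sigma(I)^{1/p}}.
\]
Since $M^+_\alpha g\le \mathcal W_\alpha g$ pointwise for $g\ge0$, this dominates $A^+_{MT}(v,w,p,q)$, and Theorem~\ref{Sa1} then gives $\|M^+_\alpha\|_{L^p_w\to L^q_v}\approx A^+_{MT}\ll A^+_{GK}$. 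Thus the whole sufficiency is a chain of citations plus the pointwise domination $M^+_\alpha\le\mathcal W_\alpha$; the covering work is already inside Theorems~\ref{Sa1}, \ref{Sa2} and \ref{m20}, which are quoted from the literature. Your dismissal ``the passage back to the strong-type bound \dots again rests on a covering argument of the same kind'' overlooks that Theorem~\ref{Sa2}(i) is a \emph{two-sided} equivalence, so the weak-type bound for $\mathcal R_\alpha$ automatically yields the strong-type Sawyer testing bound for $\mathcal W_\alpha$, which pointwise controls that of $M^+_\alpha$.
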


\begin{proof} We prove (i). Proof for (ii) is similar. Let $A^+_{GK}(v,w,p,q)<\infty$. Observe that
$$A^+_{GK}(v,w,p,q)= [w^{1-p'},v^{1-q'}]_{Glo}^{-}(q',p').$$
Hence, by Theorem \ref{m20} we have that
$$\|{\mathcal{R}}_{\alpha}\|_{ L^{q'}_{v^{1-q'}} \rightarrow L^{p',\infty}_{w^{1-p'}}} \approx A^+_{GK}(v,w,p,q).$$ Applying now Theorem \ref{Sa2} (part (i)) for ${\mathcal{N}}_{\alpha}= {\mathcal{R}}_{\alpha}$ (in that case ${\mathcal{N}}^*_{\alpha}= {\mathcal{W}}_{\alpha}$) we conclude that
$$A^*_{LT}(w^{1-p'}, v^{1-q'}, q',p') \ll \|\mathcal{R}_{\alpha}\|_{ L^{q'}_{v^{1-q'}} \rightarrow L^{p',\infty}_{w^{1-p'}}} \ll A^+_{GK}(v,w,p,q) < \infty.$$

Hence,  by Theorem \ref{Sa1} we have that

$$\| M^+_{\alpha}\|_{L^p_w \to L^q_v}\ll A^+_{MT}(v,w,p,q) \ll A^*_{LT}(w^{1-p'}, v^{1-q'}, q',p') \ll A^+_{GK}(v,w,p,q) <\infty.$$  Necessity, and consequently, the inequality
$$A^+_{GK}(v,w,p,q) \ll \|\mathcal{R}_{\alpha}\|_{L^{p}_w\rightarrow L^{q}_v}  $$
follows by the standard way choosing appropriate test functions.
\end{proof}

\subsection{The main results}

Now we are in the position to formulate our main results of this section.

\begin{thm}\label{m10}
Suppose that $0<\alpha<1$, $1<p<1/\alpha$ and that $q$ is such that $1/p-1/q-\alpha=0$. Then

\rm{(i)} there exists a positive constant $c$ depending only on $p$  and $\alpha$ such that

\begin{equation}\label{SharpMax}
\|M^{+}_{\alpha}\|_{L^p_{w^p}\rightarrow L^q_{w^q}}\leq c  \|w\|_{A_{p,q}^{+}(\Bbb{R})}^{\frac{p'}{q}(1-\alpha)}.
 \end{equation}
 Moreover, the exponent ${\frac{p'}{q}(1-\alpha)}$ is best possible.

\rm{(ii)}  there exists a positive constant $c$ depending only on $p$ and $\alpha$ such that

\begin{equation}\label{SharpMax1}
\| M^{-}_{\alpha} \|_{ L^p_{w^p}\rightarrow L^q_{w^q} } \leq c \| w \|_{ A_{p,q}^{-}(\Bbb{R}) }^{\frac{p'}{q}(1-\alpha)}.
\end{equation}
Moreover, the  exponent ${\frac{p'}{q}(1-\alpha)}$ is best possible.
\end{thm}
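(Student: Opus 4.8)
The plan is to follow the scheme used for the Hardy–Littlewood maximal operator in Theorem \ref{m8}, but in the "off-diagonal" setting $p<q$, exploiting the two-weight characterization of $M^+_\alpha$ furnished by Proposition \ref{GaKo1}. The point is that the $A^{\pm}_{p,q}$ condition is not itself the sharp two-weight condition for $M^\pm_\alpha$; rather, the sharp condition is $A^{\pm}_{GK}$. So the first and main step is to show that for the one-parameter family of weights $v=w^q$, $\sigma=w^{-p'}$ arising here, the Gabidzashvili–Kokilashvili quantity $A^+_{GK}(w^q,w^p,p,q)$ is controlled by a power of the $A^+_{p,q}$ characteristic. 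Concretely, I would fix an interval parametrized by $a$ and $h$, split the tail integral $\int_{-\infty}^{a-h} v(y)(a-y)^{-(1-\alpha)q}\,dy$ into the dyadic annuli $\{2^{k}h\le a-y<2^{k+1}h\}$, $k\ge 0$, estimate $\int$ of $v$ over each such annulus (an interval of length $\sim 2^k h$ lying to the left of $a$) by the $A^+_{p,q}$ inequality applied at scale $2^{k+1}h$ — giving a factor $\|w\|_{A^+_{p,q}}^{\cdot}\bigl(\sigma(\text{annulus})\bigr)^{-q/p'}(2^k h)^{q}$ — and then sum the resulting geometric series in $k$ (which converges precisely because $(1-\alpha)q>q-q/p'=q/p$, i.e. because $1/p-1/q=\alpha$ forces $q/p' = q(1-1/p) = q-1 <q(1-\alpha)$... here one checks the exponent bookkeeping carefully). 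Tracking the power of $\|w\|_{A^+_{p,q}}$ through this computation should produce exactly $A^+_{GK}(w^q,w^p,p,q)\ll \|w\|_{A^+_{p,q}}^{\frac{p'}{q}(1-\alpha)}\bigl(\int_{a-h}^{a+h}\sigma\bigr)^{1/p'}$, hence $\|M^+_\alpha\|_{L^p_{w^p}\to L^q_{w^q}}\approx A^+_{GK}\ll\|w\|_{A^+_{p,q}}^{\frac{p'}{q}(1-\alpha)}$ by Proposition \ref{GaKo1}(i).

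For part (ii), the operator $M^-_\alpha$ is handled by the mirror-image argument: $A^-_{GK}(w^q,w^p,p,q)$ is bounded by the same power of $\|w\|_{A^-_{p,q}}$ via the identical dyadic decomposition of the tail $\int_{a+h}^{\infty} v(y)(y-a)^{-(1-\alpha)q}\,dy$, and then Proposition \ref{GaKo1}(ii) closes the estimate. There is nothing genuinely new here beyond reflecting the intervals, so I would state it as "the proof is analogous."

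For sharpness, I would imitate the examples constructed at the end of the proof of Theorem \ref{m8}. For $M^+_\alpha$, take $0<\eps<1$ and a power weight of the form $w(x)=|1-x|^{\gamma}$ with the exponent $\gamma$ chosen (in terms of $p$, $q$, $\alpha$, $\eps$) so that $\|w\|_{A^+_{p,q}(\mathbb R)}\approx \eps^{-1}$, and a test function $f(t)=(1-t)^{\delta}\chi_{(0,1)}(t)$ with $\delta$ at the critical integrability threshold so that $f\in L^p_{w^p}$; then for $0<x<1$ one estimates $M^+_\alpha f(x)\ge (1-x)^{\alpha-1}\int_x^1 f(t)\,dt \approx \eps^{-1} (1-x)^{\alpha+\delta}\chi_{(0,1)}(x)$, and a direct computation of the $L^q_{w^q}$ norm of the right-hand side versus the $L^p_{w^p}$ norm of $f$ shows the operator norm is $\gtrsim \eps^{-1}\approx \|w\|_{A^+_{p,q}}$, while the upper bound gives $\lesssim \|w\|_{A^+_{p,q}}^{\frac{p'}{q}(1-\alpha)}$; matching the two forces the exponent to be exactly $\frac{p'}{q}(1-\alpha)$ after letting $\eps\to 0$. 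The $M^-_\alpha$ example is the reflection, using $w(x)=|x|^{\gamma}$ and $f(t)=t^{\delta}\chi_{(0,1)}(t)$.

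The main obstacle I anticipate is the exponent bookkeeping in the first step: one must verify that the geometric series in the dyadic decomposition of the $A^+_{GK}$ tail converges (this is where the hypothesis $1/p-1/q=\alpha$, equivalently $(1-\alpha)q>q/p$, is used) and, more delicately, that the power of $\|w\|_{A^+_{p,q}}$ accumulated from applying the $A^+_{p,q}$ inequality annulus-by-annulus does not deteriorate under summation — i.e. that one really gets the clean exponent $\frac{p'}{q}(1-\alpha)$ and not something larger. Getting the right (rather than merely "some") power requires applying the $A^+_{p,q}$ estimate on each annulus in the form that isolates $\sigma$ of the fixed central interval, and using Hölder/monotonicity to pull $\sigma$ of the annulus back to $\sigma$ of the central interval without losing powers. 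Pinning that down is the crux; the rest is routine.
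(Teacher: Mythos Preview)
Your overall architecture matches the paper's: reduce the upper bound to Proposition~\ref{GaKo1} by controlling $A^+_{GK}(w^q,w^p,p,q)$ in terms of $\|w\|_{A^+_{p,q}}$, and prove sharpness with the power-weight examples $w(x)=|1-x|^{\gamma}$ (resp.\ $|x|^{\gamma}$). The sharpness sketch is essentially right, up to normalization: with $w(x)=|1-x|^{(1-\eps)/p'}$ one has $\|w\|_{A^+_{p,q}}\approx \eps^{-q/p'}$, and the computation gives $\|M^+_\alpha\|_{L^p_{w^p}\to L^q_{w^q}}\gtrsim \eps^{-(1-\alpha)}\approx \|w\|_{A^+_{p,q}}^{(1-\alpha)p'/q}$, so the exponent is forced.

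There is, however, a genuine gap in your upper-bound step. You claim the dyadic tail series converges ``precisely because $(1-\alpha)q>q-q/p'=q/p$'', and you write $q/p'=q(1-1/p)=q-1$. The arithmetic is wrong ($q(1-1/p)=q-q/p$, not $q-1$), and more importantly the inequality $(1-\alpha)q>1+q/p'$ is \emph{false}: from $1/p-1/q=\alpha$ one gets the identity $q(1-\alpha)=1+q/p'$, so after applying the $A^+_{p,q}$ condition on each annulus the powers of $2^kh$ cancel \emph{exactly}, leaving a series of the form $\sum_{k\ge 0}\bigl(\sigma([a-2^kh,a])\bigr)^{-q/p'}$. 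This has no a~priori decay; if $\sigma$ were concentrated in $[a-h,a]$ the terms would be constant. The convergence mechanism is not an exponent gap but the one-sided doubling property of $\sigma=w^{-p'}$ to the left, i.e.\ Lemma~\ref{m26} (via Lemma~\ref{m24}) applied to $w^{-1}\in A^-_{q',p'}$: it gives $\sigma([a-r,a])/\sigma([a-2r,a])\le \theta:=\|w^{-1}\|_{A^-_{q',p'}}/(\|w^{-1}\|_{A^-_{q',p'}}+1)$, so the series is geometric with ratio $\theta^{q/p'}$ and sums to $O(\|w^{-1}\|_{A^-_{q',p'}})$. That extra factor, combined with the $\|w\|_{A^+_{p,q}}$ from the annulus estimate, is exactly what produces the exponent $\tfrac{p'}{q}(1-\alpha)$. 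Without this doubling lemma your argument does not close.

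The paper packages this more cleanly by passing to the dual: one checks that $A^+_{GK}(w^q,w^p,p,q)=[w^{-p'},w^{-q'}]^-_{Glo}(q',p')$, then applies Lemma~\ref{m45} (which is precisely the dyadic-plus-doubling computation) to $w^{-1}\in A^-_{q',p'}$, obtaining $A^+_{GK}\ll\|w^{-1}\|_{A^-_{q',p'}}^{1-\alpha}=\|w\|_{A^+_{p,q}}^{(1-\alpha)p'/q}$. Your direct route and the paper's dual route are the same computation; the missing piece in yours is the doubling Lemma~\ref{m26}.
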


\begin{thm}\label{m30}
Let $1< p<\frac{1}{\alpha}$, where $0<\alpha<1$. We set $q=\frac{p}{1-\alpha p}$. Then \\
\noindent{\textnormal(a)}
\begin{equation}\label{m31c}
\|\mathcal{R}_{\alpha}\|_{L^{p}_{w^p}\rightarrow L^{q,\infty}_{w^q}}\le c\|w\|_{A_{p,q}^{-}(\Bbb{R})}^{1-\alpha},
\end{equation}
where the positive constant $c$ depends only on $p$ and $\alpha$. \\
\noindent{\textnormal(b)}
\begin{equation}\label{m31a}
\|\mathcal{W}_{\alpha}\|_{L^{p}_{w^p}\rightarrow L^{q,\infty}_{w^q}}\le c\|w\|_{A_{p,q}^{+}(\Bbb{R})}^{1-\alpha},
\end{equation}
where the positive constant $c$ depends only on $p$ and $\alpha$.
\end{thm}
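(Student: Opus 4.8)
\medskip
\noindent{\em Proof strategy for Theorem \ref{m30}.}

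The idea is to deduce both inequalities from the weak-type two--weight descriptions of $\mathcal{R}_{\alpha}$ and $\mathcal{W}_{\alpha}$, namely Theorems \ref{m20} and \ref{m21}. By Theorem \ref{m20} (applied with $v=w^{q}$ and with $w^{p}$ in the role of the weight $w$ there), together with the elementary identity $(w^{p})^{1-p'}=w^{-p'}$, one has $\|\mathcal{R}_{\alpha}\|_{L^{p}_{w^{p}}\to L^{q,\infty}_{w^{q}}}\approx[w^{q},w^{p}]^{-}_{Glo}(p,q)$, so that \eqref{m31c} will follow once we establish
$$[w^{q},w^{p}]^{-}_{Glo}(p,q)=\sup_{a\in\Bbb{R},\,h>0}\Bigg(\int\limits_{a-h}^{a+h}w^{q}\Bigg)^{1/q}\Bigg(\int\limits_{-\infty}^{a-h}(a-t)^{(\alpha-1)p'}w^{-p'}(t)\,dt\Bigg)^{1/p'}\le c\,\|w\|_{A_{p,q}^{-}(\Bbb{R})}^{1-\alpha};$$
similarly, via Theorem \ref{m21}, \eqref{m31a} reduces to the same bound for $[w^{q},w^{p}]^{+}_{Glo}(p,q)$. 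The relation $q=p/(1-\alpha p)$ is precisely the scaling identity $1+p'/q=(1-\alpha)p'$, which is what will make the powers of the dyadic side lengths cancel in what follows.

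Fix $a,h$ and decompose the left half line dyadically: $(-\infty,a-h)=\bigcup_{k\ge0}I_{k}$ with $I_{k}=[a-2^{k+1}h,\,a-2^{k}h]$, so $|I_{k}|=2^{k}h$. Since $(a-t)^{(\alpha-1)p'}\le(2^{k}h)^{(\alpha-1)p'}$ on $I_{k}$, the tail factor in the supremum above is at most $\sum_{k\ge0}|I_{k}|^{-(1-\alpha)p'}\int_{I_{k}}w^{-p'}$. For each $k$ I would test the $A_{p,q}^{-}$ condition on the pair of adjacent, equal-length intervals $(I_{k},[a-2^{k}h,a])$ and combine it with Jensen's inequality for the convex function $\tau\mapsto\tau^{-p'/q}$ (note that $w^{-p'}=(w^{q})^{-p'/q}$). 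Writing $m_{k}:=\int_{a-2^{k}h}^{a}w^{q}$, this yields at once
$$|I_{k}|^{-(1-\alpha)p'}\int\limits_{I_{k}}w^{-p'}\ \le\ \|w\|_{A_{p,q}^{-}}^{p'/q}\,m_{k}^{-p'/q},\qquad m_{k+1}\ \ge\ \bigl(1+\|w\|_{A_{p,q}^{-}}^{-1}\bigr)m_{k}.$$
The second inequality forces the running masses $m_{k}$ to grow geometrically, so $\sum_{k\ge0}m_{k}^{-p'/q}$ is bounded by a controlled multiple of $m_{0}^{-p'/q}$; testing $A_{p,q}^{-}$ once more on the pair $([a-h,a],[a,a+h])$ (again with Jensen) bounds $\int_{a-h}^{a+h}w^{q}$ by a controlled multiple of $m_{0}=\int_{a-h}^{a}w^{q}$. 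Inserting these estimates into the displayed supremum and collecting the powers of $\|w\|_{A_{p,q}^{-}}$ gives the desired bound for $[w^{q},w^{p}]^{-}_{Glo}(p,q)$, hence \eqref{m31c}. Finally, \eqref{m31a} needs no separate argument: the substitution $x\mapsto-x$ conjugates $\mathcal{W}_{\alpha}$ into $\mathcal{R}_{\alpha}$ and carries $A_{p,q}^{+}(\Bbb{R})$ onto $A_{p,q}^{-}(\Bbb{R})$ with the same characteristic, so \eqref{m31a} is just \eqref{m31c} for the reflected weight.

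The point I expect to be most delicate is the summation over the dyadic scales. Because of the identity $1+p'/q=(1-\alpha)p'$, the crude per-scale estimate of $|I_{k}|^{-(1-\alpha)p'}\int_{I_{k}}w^{-p'}$ against a single fixed interval produces a quantity that is {\em constant} in $k$, so that a naive series diverges; it is exactly the geometric growth of the $m_{k}$ --- itself a consequence of the one--sided Muckenhoupt condition together with the convexity linking $w^{q}$ to $w^{-p'}$ --- that makes the sum converge. The genuinely technical part is then to organise the choice of testing intervals and the bookkeeping of the powers of $\|w\|_{A_{p,q}^{\mp}}$ so that the final exponent is exactly $1-\alpha$ and not something larger. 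Should the direct estimate of the global functional prove too lossy, an alternative is to discretise $\mathcal{R}_{\alpha}$ by a one--sided sparse-type operator and bound its weak-type norm level set by level set, in the spirit of \cite{LAC-MOE-PER-TORR}.
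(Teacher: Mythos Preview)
Your approach is the paper's: reduce via Theorems \ref{m20}--\ref{m21} to the bound $[w^{q},w^{p}]^{\pm}_{Glo}\ll\|w\|_{A_{p,q}^{\pm}}^{1-\alpha}$ (this is Lemma \ref{m45}), then prove the latter by a dyadic decomposition of the tail, the $A_{p,q}^{\pm}$ condition at each scale, and geometric growth of the masses $m_{k}$; your Jensen step is exactly the paper's H\"older step in Lemmas \ref{m24}--\ref{m26}, and the reflection argument for part (b) is fine.

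The bookkeeping issue you flag is real and, as written, your indexing loses a power. With $m_{0}=\int_{a-h}^{a}w^{q}$ but first factor $\int_{a-h}^{a+h}w^{q}$, the comparison costs $(1+\|w\|)^{1/q}$ and your final exponent is $1-\alpha+1/q$, not $1-\alpha$. The paper's cure is to anchor the dyadic scales at the \emph{right endpoint} $a+h$ of the first-factor interval rather than at its centre $a$: set $\tilde I_{j}=[a+h-2^{j+1}h,\,a+h-2^{j}h]$ and $\tilde m_{j}=\int_{a+h-2^{j}h}^{a+h}w^{q}$ for $j\ge1$. Then $\tilde m_{1}=\int_{a-h}^{a+h}w^{q}$ coincides with the first factor, the $A_{p,q}^{-}$ test at $x=a+h-2^{j}h$ with step $2^{j}h$ still gives $|\tilde I_{j}|^{-(1-\alpha)p'}\int_{\tilde I_{j}}w^{-p'}\le\|w\|^{p'/q}\,\tilde m_{j}^{-p'/q}$, the geometric growth yields $\sum_{j\ge1}\tilde m_{j}^{-p'/q}\le C\|w\|\,\tilde m_{1}^{-p'/q}$, and the factors $\tilde m_{1}^{\pm1/q}$ now cancel exactly to give exponent $1/q+1/p'=1-\alpha$.
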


\begin{thm}\label{m32}
Let $0<\alpha<1$, $1 < p<1/\alpha$ and let $q$ satisfy $q=\frac{p}{1-\alpha p}$. Then \\
{\textrm(a)}  there is a positive  constant $c$ depending only on $p$ and $\alpha$ such that
\begin{equation}\label{m31b}
\|\mathcal{R}_{\alpha}\|_{L^{p}_{w^p}\rightarrow L^{q}_{w^q}}\le c\|w\|_{A_{p,q}^{-}(\Bbb{R})}^{(1-\alpha)\max\{1,p'/q\}}.
\end{equation}
Furthermore, this estimate is sharp;\\
{\textrm(b)}  there is a positive  constant $c$ depending only on $p$ and $\alpha$ such that
\begin{equation}\label{m31}
\|\mathcal{W}_{\alpha}\|_{L^{p}_{w^p}\rightarrow L^{q}_{w^q}}\le c\|w\|_{A_{p,q}^{+}(\Bbb{R})}^{(1-\alpha)\max\{1,p'/q\}}.
\end{equation}
Moreover, this estimate is sharp.
\end{thm}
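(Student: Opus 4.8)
The plan is to obtain the upper bounds \eqref{m31b} and \eqref{m31} from the weak type estimates of Theorem~\ref{m30} by means of the duality identities \eqref{m33} and \eqref{m34}, and then to read off sharpness from the already-sharp bounds for the one--sided fractional maximal operators in Theorem~\ref{m10}, using the pointwise domination $\mathcal{W}_{\alpha} f\ge M_{\alpha}^{+}f$ and $\mathcal{R}_{\alpha} f\ge M_{\alpha}^{-}f$ for $f\ge0$ (since $\mathcal{W}_{\alpha} f(x)\ge h^{\alpha-1}\int_x^{x+h}f$ for every $h>0$, and similarly for $\mathcal{R}_{\alpha}$) together with duality. Throughout one uses $1/p-1/q=\alpha$, equivalently $1/q'-1/p'=\alpha$, equivalently $p'=q'/(1-\alpha q')$, and $1<q'<1/\alpha$ --- all immediate from $1<p<1/\alpha$, $q=p/(1-\alpha p)$.

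\emph{Upper bounds.} I would treat \eqref{m31} for $\mathcal{W}_{\alpha}$; the estimate \eqref{m31b} for $\mathcal{R}_{\alpha}$ is entirely analogous, with $+$ and $-$ (and $\mathcal{W}_{\alpha}$ and $\mathcal{R}_{\alpha}$) interchanged. Applying \eqref{m34} with the weight $w^p$ on the domain and $w^q$ on the target, and noting $(w^q)^{1-q'}=w^{-q'}$, $(w^p)^{1-p'}=w^{-p'}$, one gets
\[
\|\mathcal{W}_{\alpha}\|_{L^p_{w^p}\to L^q_{w^q}}\approx\|\mathcal{W}_{\alpha}\|_{L^p_{w^p}\to L^{q,\infty}_{w^q}}+\|\mathcal{R}_{\alpha}\|_{L^{q'}_{w^{-q'}}\to L^{p',\infty}_{w^{-p'}}}.
\]
The first term is $\le c\|w\|_{A_{p,q}^{+}(\Bbb{R})}^{1-\alpha}$ by Theorem~\ref{m30}(b). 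To the second term one applies Theorem~\ref{m30}(a) to the weight $\omega:=w^{-1}$ with the exponent pair $(q',p')$ in place of $(p,q)$ --- legitimate by the relations just recorded --- which yields $\|\mathcal{R}_{\alpha}\|_{L^{q'}_{\omega^{q'}}\to L^{p',\infty}_{\omega^{p'}}}\le c\|w^{-1}\|_{A_{q',p'}^{-}(\Bbb{R})}^{1-\alpha}$. A direct comparison of the defining suprema gives the homogeneity identity
\[
\|w^{-1}\|_{A_{q',p'}^{-}(\Bbb{R})}=\|w\|_{A_{p,q}^{+}(\Bbb{R})}^{\,p'/q}
\]
(both suprema are formed from the single pair $\tfrac1h\int_{x-h}^x w^q$ and $\tfrac1h\int_x^{x+h}w^{-p'}$, one being the $(p'/q)$-th power of the other, using $(q')'=q$). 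Hence the second term is $\le c\|w\|_{A_{p,q}^{+}(\Bbb{R})}^{(p'/q)(1-\alpha)}$. Since $\|w\|_{A_{p,q}^{+}(\Bbb{R})}\ge1$ (let $h\to0$ at a common one--sided Lebesgue point of $w^q$ and $w^{-p'}$), the larger of the two powers absorbs the smaller, giving \eqref{m31}.

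\emph{Sharpness.} Suppose $\|\mathcal{W}_{\alpha}\|_{L^p_{w^p}\to L^q_{w^q}}\le C\|w\|_{A_{p,q}^{+}(\Bbb{R})}^{\beta}$ for all weights $w$; one must show $\beta\ge(1-\alpha)\max\{1,p'/q\}$. First, $\|\mathcal{W}_{\alpha}\|_{L^p_{w^p}\to L^q_{w^q}}\ge\|M_{\alpha}^{+}\|_{L^p_{w^p}\to L^q_{w^q}}$, so the sharpness part of Theorem~\ref{m10}(i) gives weights $w_{\eps}$ with $\|w_{\eps}\|_{A_{p,q}^{+}(\Bbb{R})}\to\infty$ and $\|M_{\alpha}^{+}\|_{L^p_{w_{\eps}^p}\to L^q_{w_{\eps}^q}}\ge c\|w_{\eps}\|_{A_{p,q}^{+}(\Bbb{R})}^{(p'/q)(1-\alpha)}$, forcing $\beta\ge(p'/q)(1-\alpha)$. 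Second, since $\mathcal{W}_{\alpha}^{*}=\mathcal{R}_{\alpha}$,
\[
\|\mathcal{W}_{\alpha}\|_{L^p_{w^p}\to L^q_{w^q}}=\|\mathcal{R}_{\alpha}\|_{L^{q'}_{w^{-q'}}\to L^{p'}_{w^{-p'}}}\ge\|M_{\alpha}^{-}\|_{L^{q'}_{w^{-q'}}\to L^{p'}_{w^{-p'}}};
\]
the sharpness part of Theorem~\ref{m10}(ii) with the exponent pair $(q',p')$ gives weights $\omega_{\eps}$, $\|\omega_{\eps}\|_{A_{q',p'}^{-}(\Bbb{R})}\to\infty$, with $\|M_{\alpha}^{-}\|_{L^{q'}_{\omega_{\eps}^{q'}}\to L^{p'}_{\omega_{\eps}^{p'}}}\ge c\|\omega_{\eps}\|_{A_{q',p'}^{-}(\Bbb{R})}^{(q/p')(1-\alpha)}$; setting $w_{\eps}:=\omega_{\eps}^{-1}$ and using $\|w_{\eps}^{-1}\|_{A_{q',p'}^{-}(\Bbb{R})}=\|w_{\eps}\|_{A_{p,q}^{+}(\Bbb{R})}^{p'/q}$ this becomes $\|\mathcal{W}_{\alpha}\|_{L^p_{w_{\eps}^p}\to L^q_{w_{\eps}^q}}\ge c\|w_{\eps}\|_{A_{p,q}^{+}(\Bbb{R})}^{1-\alpha}$, forcing $\beta\ge1-\alpha$. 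Hence $\beta\ge(1-\alpha)\max\{1,p'/q\}$, i.e. the exponent in \eqref{m31} is best possible; \eqref{m31b} is sharp by the symmetric argument (or by duality).

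\emph{Main obstacle.} There is no genuinely new idea: the proof is the assembly of Theorems~\ref{m30} and~\ref{m10} via duality, so the real work is bookkeeping --- keeping the exponent relations ($1/q'-1/p'=\alpha$, $p'=q'/(1-\alpha q')$) straight so that Theorem~\ref{m30} applies verbatim to the dual configuration, and verifying the homogeneity identities $\|w^{-1}\|_{A_{q',p'}^{\mp}(\Bbb{R})}=\|w\|_{A_{p,q}^{\pm}(\Bbb{R})}^{p'/q}$ that convert a bound for $w^{-1}$ back into one in terms of $\|w\|_{A_{p,q}^{\pm}(\Bbb{R})}$. The only other point to watch is the elementary lower bound $\|w\|_{A_{p,q}^{\pm}(\Bbb{R})}\ge1$, which makes the sum of the two powers collapse to $(1-\alpha)\max\{1,p'/q\}$.
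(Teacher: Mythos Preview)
Your proof is correct and follows essentially the same route as the paper. The paper phrases the upper bound via the Gabidzashvili--Kokilashvili two-weight criterion (Theorem~\ref{m22}/\ref{m23}) rather than directly via the duality identities \eqref{m33}/\eqref{m34}, but since the Remark preceding \eqref{m33}--\eqref{m34} shows these are equivalent, the content is identical: both arguments split the strong-type norm into the direct weak-type piece and the dual weak-type piece, bound each by Theorem~\ref{m30} (equivalently Lemma~\ref{m45}), and combine using the homogeneity identity $\|w^{-1}\|_{A_{q',p'}^{\mp}}=\|w\|_{A_{p,q}^{\pm}}^{p'/q}$. For sharpness the paper likewise uses the pointwise domination $M_{\alpha}^{\pm}f\le \mathcal{W}_{\alpha}f,\,\mathcal{R}_{\alpha}f$ together with the examples from Theorem~\ref{m10} for one regime and duality for the other; your formulation (showing $\beta\ge(p'/q)(1-\alpha)$ and $\beta\ge1-\alpha$ separately and taking the maximum) is the same argument organized slightly differently.
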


\subsection{Main lemmas}

In this subsection we prove some lemmas regarding the weights satisfying one--sided conditions.

\begin{lem}\label{m24}
Let $u\in A_{p,q}^{-}(\Bbb{R})$. Then for every $a\in\Bbb{R}$ and $h>0$,
$$\int\limits_{a}^{a+h}u^q(x)dx\leq\|u\|_{A_{p,q}^{-}(\Bbb{R})}\int\limits_{a-h}^{a}u^q(x)dx.$$
\end{lem}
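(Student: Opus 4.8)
The plan is to read everything off the $A_{p,q}^-(\Bbb{R})$ definition evaluated at the single point $x=a$, and then to cancel the ``wrong'' factors with one application of Hölder's inequality on the left interval $(a-h,a)$.

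First I would use the definition of $\|u\|_{A_{p,q}^-(\Bbb{R})}$ with the choice $x=a$ and the given $h>0$. This gives directly
\[
\int_a^{a+h} u^q(t)\,dt \;\le\; \|u\|_{A_{p,q}^-(\Bbb{R})}\, h \left(\frac{1}{h}\int_{a-h}^a u^{-p'}(t)\,dt\right)^{-q/p'}
= \|u\|_{A_{p,q}^-(\Bbb{R})}\, h^{\,1+q/p'}\left(\int_{a-h}^a u^{-p'}(t)\,dt\right)^{-q/p'}.
\]
Thus it remains only to show that $h^{\,1+q/p'}\left(\int_{a-h}^a u^{-p'}\right)^{-q/p'}\le \int_{a-h}^a u^q$, i.e.\ that
\[
h^{\,1+q/p'} \;\le\; \left(\int_{a-h}^a u^q(t)\,dt\right)\left(\int_{a-h}^a u^{-p'}(t)\,dt\right)^{q/p'}.
\]

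For this I would apply Hölder's inequality on $(a-h,a)$ to the factorization $1 = u^{\theta}\cdot u^{-\theta}$ with $\theta = \frac{qp'}{p'+q}$, using the conjugate exponents $\lambda = \frac{p'+q}{p'}$ and $\lambda' = \frac{p'+q}{q}$; since $\theta\lambda = q$ and $\theta\lambda' = p'$, this yields
\[
h = \int_{a-h}^a 1\,dt \;\le\; \left(\int_{a-h}^a u^q\right)^{p'/(p'+q)}\left(\int_{a-h}^a u^{-p'}\right)^{q/(p'+q)},
\]
and raising both sides to the power $\frac{p'+q}{p'}=1+\frac{q}{p'}$ gives exactly the displayed inequality. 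Substituting back into the first estimate cancels the factor $\left(\int_{a-h}^a u^{-p'}\right)^{-q/p'}$ and leaves $\int_a^{a+h}u^q \le \|u\|_{A_{p,q}^-(\Bbb{R})}\int_{a-h}^a u^q$, which is the claim.

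There is no genuine obstacle here: the argument is just the observation that the $A_{p,q}^-$ bound comparing $(a,a+h)$ with $(a-h,a)$, combined with the elementary ``$h^{1+q/p'}$'' lower bound for $\left(\int_{a-h}^a u^q\right)\left(\int_{a-h}^a u^{-p'}\right)^{q/p'}$ coming from Hölder, forces the $u^q$-mass of the right half-interval to dominate that of the left. The only thing to watch is the bookkeeping of the Hölder exponents and of the powers $q/p'$; no structural difficulty is expected, and no relation between $p$ and $q$ beyond $1<p,q<\infty$ is needed.
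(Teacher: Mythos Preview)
Your proof is correct and follows essentially the same approach as the paper: combine the $A_{p,q}^-$ condition at $x=a$ with a H\"older inequality on $(a-h,a)$ to compare $h^{1+q/p'}$ with $\bigl(\int_{a-h}^a u^q\bigr)\bigl(\int_{a-h}^a u^{-p'}\bigr)^{q/p'}$. Your choice of H\"older exponents $\lambda=(p'+q)/p'$, $\lambda'=(p'+q)/q$ is in fact slightly cleaner than the paper's, which first applies H\"older with exponents $q,q'$ and then needs $p\le q$ to pass from averages of $u^{-q'}$ to averages of $u^{-p'}$; your argument avoids that intermediate step and works for all $1<p,q<\infty$.
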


Further, let $u\in A_{p,q}^{+}(\Bbb{R})$. Then for every $a\in\Bbb{R}$ and $h>0$,
$$\int\limits_{a-h}^{a}u^q(x)dx\leq\|u\|_{A_{p,q}^{+}\Bbb{(R)}}\int\limits_{a}^{a+h}u^q(x)dx.$$
\begin{proof}
Let us prove the first part. The second one follows analogously. Let $u\in A_{p,q}^{-}(\Bbb{R})$. Using H\"{o}lder's inequality we have
\begin{eqnarray*}
\int\limits_{a}^{a+h}u^{q}(x)dx&\leq&\Bigg(\int\limits_{a}^{a+h}u^q(x)dx\Bigg)h^{-q}\Bigg(\int\limits_{a-h}^{a}u^q(x)dx\Bigg)
\Bigg(\int\limits_{a-h}^{a}u^{-q'}(x)dx\Bigg)^{q/q'}\\
&\leq&h^{-1}\Bigg(\int\limits_{a}^{a+h}u^q(x)dx\Bigg)\Bigg(\int\limits_{a-h}^{a}u^q(x)dx\Bigg)\Bigg(\frac{1}{h}\int\limits_{a-h}^{a}
u^{-p'}(x)dx\Bigg)^{q/p'}\\
&\leq&\|u\|_{A_{p,q}^{-}(\Bbb{R})}\int\limits_{a-h}^{a}u^{q}(x)dx.
\end{eqnarray*}
\end{proof}


\begin{lem}\label{m26}
Let $u\in A_{p,q}^{-}(\Bbb{R})$. Then for all $a\in\Bbb{R}$ and $r>0$ we have,
$$\frac{\int\limits_{a-r}^{a}u^q(x)dx}{\int\limits_{a-2r}^{a}u^q(x)dx}
\leq\frac{\|u\|_{A_{p,q}^{-}\Bbb{(R)}}}{\|u\|_{A_{p,q}^{-}\Bbb{(R)}}+1}.$$
Further, let $u\in A_{p,q}^{+}(\Bbb{R})$. Then for all $a\in\Bbb{R}$ and $r>0$ we have,
$$\frac{\int\limits_{a}^{a+r}u^q(x)dx}{\int\limits_{a}^{a+2r}u^q(x)dx}\leq
\frac{\|u\|_{A_{p,q}^{+}(\Bbb{R})}}{\|u\|_{A_{p,q}^{+}(\Bbb{R})}+1}.$$
\end{lem}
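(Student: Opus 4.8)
The plan is to derive this estimate directly from Lemma \ref{m24}, which already packages the essential one-sided doubling information. I will prove the first inequality (for $u\in A_{p,q}^{-}(\Bbb{R})$); the second follows by the same argument with the roles of the left and right intervals interchanged. Fix $a\in\Bbb{R}$ and $r>0$. The key observation is that the interval $(a-2r,a-r)$ is exactly the left neighbour of length $r$ of the point $a-r$, while $(a-r,a)$ is the right neighbour of length $r$ of the same point $a-r$. Thus, applying Lemma \ref{m24} at the base point $a-r$ with $h=r$ gives
$$\int\limits_{a-r}^{a}u^q(x)\,dx\le\|u\|_{A_{p,q}^{-}(\Bbb{R})}\int\limits_{a-2r}^{a-r}u^q(x)\,dx.$$

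From here it is pure arithmetic. Write $X=\int_{a-r}^{a}u^q$ and $Y=\int_{a-2r}^{a-r}u^q$, so the inequality above reads $X\le\|u\|_{A_{p,q}^{-}(\Bbb{R})}\,Y$, equivalently $Y\ge X/\|u\|_{A_{p,q}^{-}(\Bbb{R})}$ (and all quantities are positive and finite since $u^q$ is locally integrable and a.e.\ positive). Then
$$\frac{\int\limits_{a-r}^{a}u^q(x)\,dx}{\int\limits_{a-2r}^{a}u^q(x)\,dx}=\frac{X}{X+Y}\le\frac{X}{X+X/\|u\|_{A_{p,q}^{-}(\Bbb{R})}}=\frac{\|u\|_{A_{p,q}^{-}(\Bbb{R})}}{\|u\|_{A_{p,q}^{-}(\Bbb{R})}+1},$$
where in the middle step I used that $t\mapsto X/(X+t)$ is decreasing in $t>0$. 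This is precisely the claimed bound.

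For the second inequality one repeats the argument using the second part of Lemma \ref{m24}: at the base point $a+r$ with $h=r$ one has $\int_{a}^{a+r}u^q\le\|u\|_{A_{p,q}^{+}(\Bbb{R})}\int_{a+r}^{a+2r}u^q$, and the same two-line computation with $X=\int_a^{a+r}u^q$, $Y=\int_{a+r}^{a+2r}u^q$ yields the desired ratio bound with $\|u\|_{A_{p,q}^{+}(\Bbb{R})}$ in place of $\|u\|_{A_{p,q}^{-}(\Bbb{R})}$.

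I do not expect any genuine obstacle here: the only point requiring a moment's care is the correct choice of base point and scale in Lemma \ref{m24} — namely that one should apply it at $a-r$ (not at $a$) with $h=r$, so that the two abutting intervals of length $r$ are the ones appearing in the statement. Everything else is the elementary monotonicity of $t\mapsto X/(X+t)$, together with the finiteness and positivity of the integrals $\int u^q$ guaranteed by $u$ being a weight. If one wished, one could also note that the bound is equivalent to the "reverse doubling" statement $\int_{a-2r}^{a}u^q\ge(1+\|u\|_{A_{p,q}^{-}}^{-1})\int_{a-r}^{a}u^q$, which is the form in which it will presumably be used later.
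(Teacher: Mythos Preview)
Your proof is correct and follows essentially the same route as the paper: apply Lemma \ref{m24} at the base point $a-r$ with $h=r$, then split $\int_{a-2r}^{a}u^q=\int_{a-2r}^{a-r}u^q+\int_{a-r}^{a}u^q$ and rearrange. The paper's write-up is in fact the ``reverse doubling'' form $\int_{a-2r}^{a}u^q\ge(1+\|u\|_{A_{p,q}^{-}}^{-1})\int_{a-r}^{a}u^q$ you mention at the end.
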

\begin{proof}
Let $u\in A_{p,q}^{-}(\Bbb{R})$. By Lemma \ref{m24} we find that
\begin{eqnarray*}
\int\limits_{a-2r}^{a}u^{q}(x)dx&=&\int\limits_{a-2r}^{a-r}u^{q}(x)dx+\int\limits_{a-r}^{a}u^{q}(x)dx\\
&\ge&\Bigg(\frac{1}{\|u\|_{A_{p,q}^{-}(\Bbb{R})}}+1\Bigg)\int\limits_{a-r}^{a}u^q(x)dx.
\end{eqnarray*}

The remaining part of the lemma follows analogously.
\end{proof}

To prove Theorems \ref{m10} and \ref{m30} we need to prove also the following Lemma:

\begin{lem}\label{m45}
Let $0<\alpha<1$, $1<p<1/\alpha$ and let $q$ be such that $\frac{1}{q}=\frac{1}{p}-\alpha$. Then
\begin{equation}\label{m28}
[w^q, w^p]^{+}_{Glo}\ll c\|w\|^{1-\alpha}_{A_{p,q}^{+}(\Bbb{R})}.
\end{equation}
\begin{equation}\label{m29}
[w^q, w^p]^{-}_{Glo}\ll c\|w\|^{1-\alpha}_{A_{p,q}^{-}(\Bbb{R})}.
\end{equation}
\end{lem}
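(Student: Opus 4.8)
The plan is to prove \eqref{m29}; then \eqref{m28} follows by the obvious symmetry (reflecting $x \mapsto -x$ interchanges $\mathcal{R}_\alpha$ with $\mathcal{W}_\alpha$ and $A_{p,q}^-$ with $A_{p,q}^+$). Fix $a \in \mathbb{R}$ and $h > 0$. By definition,
\begin{equation*}
[w^q, w^p]^{-}_{Glo} = \sup_{a, h} \left( \int_{a-h}^{a+h} w^q(t)\, dt \right)^{1/q} \left( \int_{-\infty}^{a-h} (a-t)^{(\alpha-1)p'} w^{-p'}(t)\, dt \right)^{1/p'},
\end{equation*}
so I must bound the product of these two factors by $c \|w\|_{A_{p,q}^-}^{1-\alpha}$ uniformly in $a, h$. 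The first step is to estimate the left-hand numerator $\int_{a-h}^{a+h} w^q$: splitting into $(a-h,a)$ and $(a,a+h)$ and applying Lemma \ref{m24}, I get $\int_{a-h}^{a+h} w^q \le (1 + \|w\|_{A_{p,q}^-}) \int_{a-h}^{a} w^q \ll \|w\|_{A_{p,q}^-} \int_{a-h}^{a} w^q$, so it suffices to work with $\int_{a-h}^a w^q$.

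The main step is to handle the tail integral $\int_{-\infty}^{a-h} (a-t)^{(\alpha-1)p'} w^{-p'}(t)\, dt$ by a dyadic decomposition of $(-\infty, a-h)$ into the annuli $E_k := (a - 2^{k+1}h,\, a - 2^k h)$, $k = 0, 1, 2, \dots$. On $E_k$ one has $(a-t) \approx 2^k h$, so
\begin{equation*}
\int_{E_k} (a-t)^{(\alpha-1)p'} w^{-p'}(t)\, dt \approx (2^k h)^{(\alpha-1)p'} \int_{a - 2^{k+1}h}^{a} w^{-p'}(t)\, dt.
\end{equation*}
Now I apply the $A_{p,q}^-$ condition at the point $a$ with parameter $2^{k+1}h$: it gives $\left( \int_{a - 2^{k+1}h}^{a} w^{-p'} \right)^{q/p'} \le \|w\|_{A_{p,q}^-} \, (2^{k+1}h)^{q/p'} \left( \int_a^{a + 2^{k+1}h} w^q \right)^{-1}$, hence $\int_{a - 2^{k+1}h}^a w^{-p'} \le \|w\|_{A_{p,q}^-}^{p'/q} (2^{k+1}h)^{p'/q} \left( \int_a^{a+2^{k+1}h} w^q \right)^{-p'/q}$. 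I then need to compare $\int_a^{a + 2^{k+1}h} w^q$ with $\int_{a-h}^a w^q$ from below; by iterating Lemma \ref{m26} (its $A_{p,q}^-$ half, applied with the roles of left/right tails adjusted — or equivalently its $A_{p,q}^+$ half for the forward direction), the quantity $\int_a^{a+2^{k+1}h} w^q$ grows at least geometrically in $k$ relative to $\int_a^{a+h}w^q$, which in turn dominates a fixed fraction of $\int_{a-h}^a w^q$ up to a factor $\|w\|_{A_{p,q}^-}$ by Lemma \ref{m24}. Collecting powers of $2^k$: the $k$-th term of $\sum_k \int_{E_k}(a-t)^{(\alpha-1)p'}w^{-p'}$ carries a factor $(2^k h)^{(\alpha-1)p'}\cdot (2^k h)^{p'/q} = (2^k h)^{p'((\alpha-1) + 1/q)}$, and since $1/q = 1/p - \alpha$ we have $(\alpha - 1) + 1/q = 1/p - 1 = -1/p'$, so the exponent of $2^k$ is $p'\cdot(-1/p') = -1$ — the geometric series in $k$ converges, and (absorbing the extra geometric decay coming from the growth of $\int_a^{a+2^{k+1}h}w^q$) the sum is controlled by its $k=0$ term.

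Assembling the pieces, $\int_{-\infty}^{a-h}(a-t)^{(\alpha-1)p'}w^{-p'} \ll \|w\|_{A_{p,q}^-}^{p'/q}\, h^{-p'/p'}\cdot(\text{power of } h)\cdot \left(\int_{a-h}^a w^q\right)^{-p'/q}\cdot \|w\|_{A_{p,q}^-}^{p'/q}$, where the powers of $h$ cancel by the scaling identity above; raising to the $1/p'$ and multiplying by $\left(\int_{a-h}^{a+h}w^q\right)^{1/q} \ll \|w\|_{A_{p,q}^-}^{1/q}\left(\int_{a-h}^a w^q\right)^{1/q}$, the factors $\left(\int_{a-h}^a w^q\right)^{\pm 1/q}$ cancel, leaving a bound of the form $c\,\|w\|_{A_{p,q}^-}^{\theta}$ with $\theta = 2/q + 1/q \cdot(\text{bookkeeping})$; tracking the exponents carefully, $\theta$ should collapse to $1 - \alpha$ precisely because $1 - 1/q = 1/p' + \alpha$ and $1/q = 1/p-\alpha$. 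I expect the delicate point — the main obstacle — to be exactly this exponent bookkeeping: making sure that the powers of $\|w\|_{A_{p,q}^-}$ coming from (a) Lemma \ref{m24}, (b) the single application of the $A_{p,q}^-$ inequality, and (c) the geometric-series convergence via Lemma \ref{m26}, add up to the sharp value $1-\alpha$ rather than something larger, and that all powers of $h$ cancel so the estimate is genuinely scale-invariant. Once that arithmetic checks out, taking the supremum over $a$ and $h$ finishes the proof.
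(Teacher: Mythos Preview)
There is a genuine gap in step 3, and it stems from an arithmetic slip in step 2. When you apply the $A_{p,q}^-$ condition at the point $a$ with parameter $H=2^{k+1}h$, the correct inequality is
\[
\Big(\int_{a-H}^{a} w^{-p'}\Big)^{q/p'}\le \|w\|_{A_{p,q}^-}\,H^{1+q/p'}\Big(\int_a^{a+H}w^q\Big)^{-1},
\]
not $H^{q/p'}$ as you wrote; you dropped the factor of $H$ coming from the unnormalized $w^q$ integral. With the correct power, the $k$-th dyadic term carries $(2^kh)^{(\alpha-1)p'}\cdot(2^kh)^{1+p'/q}$, and since $(1-\alpha)p'=1+p'/q$ the exponent of $2^k$ is exactly $0$, not $-1$. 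So the series does \emph{not} converge on its own; you genuinely need the ``extra geometric decay coming from the growth of $\int_a^{a+2^{k+1}h}w^q$'' that you mention in passing.

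That growth, however, is not available from the hypothesis $w\in A_{p,q}^-$. Lemma \ref{m26} in its $A_{p,q}^-$ form only gives growth of $\int_{a-2r}^a w^q$ relative to $\int_{a-r}^a w^q$, i.e.\ growth to the \emph{left} of $a$; to get growth of $\int_a^{a+2r}w^q$ you would need $w\in A_{p,q}^+$, which is not assumed. Concretely, $w^q(x)=e^{-x}$ lies in $A_{p,q}^-(\mathbb{R})$, yet $\int_0^{2^{k+1}h}e^{-t}\,dt\to 1$ as $k\to\infty$, so your sum diverges for this weight. The remedy, which is what the paper does, is to apply the $A_{p,q}^-$ condition not at $a$ but at the shifted point $a-2^jh$: this pairs $\frac{1}{2^jh}\int_{a-2^jh}^{a}w^q$ with $\frac{1}{2^jh}\int_{a-2^{j+1}h}^{a-2^jh}w^{-p'}$ and bounds their product by $\|w\|_{A_{p,q}^-}^{p'/q}$. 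The remaining factor is the ratio $\big(\int_{a-2h}^a w^q\big)/\big(\int_{a-2^jh}^a w^q\big)$, which involves only intervals to the left of $a$ and therefore \emph{does} decay geometrically by Lemma \ref{m26}; summing the resulting geometric series (ratio $(\|w\|/(1+\|w\|))^{p'/q}$) contributes exactly $\|w\|_{A_{p,q}^-}^{1/p'}$, and together with the $\|w\|_{A_{p,q}^-}^{1/q}$ from the direct application you get $\|w\|_{A_{p,q}^-}^{1/q+1/p'}=\|w\|_{A_{p,q}^-}^{1-\alpha}$.
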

\begin{proof}

We prove \eqref{m29}. Proof for \eqref{m28} follows in the same manner.  First observe that $p'(1-\alpha)=1+\frac{p'}{q}$.
Further, let us take $a\in\Bbb{R}$ and $h>0$. Then by Lemma \ref{m26},
\begin{eqnarray*}
&&\Bigg(\int\limits_{a-2h}^{a}w^{q}(x)dx\Bigg)^{1/q}
\Bigg(\int\limits_{-\infty}^{a-2h}(a-x-h)^{(\alpha-1)p'}w^{-p'}(x)dx\Bigg)^{1/p'}\\
&\le&c\Bigg(\int\limits_{a-2h}^{a}w^{q}(x)dx\Bigg)^{1/q}\Bigg(\sum\limits_{j=1}^{\infty}(2^{j}h)^{(\alpha-1)p'}
\int\limits_{a-2^{j+1}h}^{a-2^{j}h}w^{-p'}(x)dx\Bigg)^{1/p'}\\
&=&c\Bigg[\sum\limits_{j=1}^{\infty}(2^{j}h)^{(\alpha-1)p'}\Bigg(\int\limits_{a-2h}^{a}w^{q}(x)dx\Bigg)^{p'/q}\Bigg(
\int\limits_{a-2^{j+1}h}^{a-2^{j}h}w^{-p'}(x)dx\Bigg)\Bigg]^{1/p'}\\
&=&c\Bigg[\sum\limits_{j=1}^{\infty}\Bigg(\frac{\int\limits_{a-2h}^{a}w^{q}(x)dx}{\int\limits_{a-2^jh}^{a}w^{q}(x)dx}
\Bigg)^{p'/q}\Bigg(\frac{1}{2^jh}\int\limits_{a-2^jh}^{a}w^{q}(x)dx\Bigg)^{p'/q}
\Bigg(\frac{1}{2^jh}\int\limits_{a-2^{j+1}h}^{a-2^jh}w^{q}(x)dx
\Bigg)\Bigg]^{1/p'}\\
&\le& c\|w\|_{A_{p,q}^{-}(\Bbb{R})}^{1/q}\Bigg(\sum\limits_{j=1}^{\infty}\Bigg(\frac{\int\limits_{a-2h}^{a}w^{q}(x)dx}
{\int\limits_{a-2^jh}^{a}w^{q}(x)dx}\Bigg)^{p'/q}\Bigg)^{1/p'}\\
&\le& c\|w\|_{A_{p,q}^{-}(\Bbb{R})}^{1/q}\Bigg(\sum\limits_{j=0}^{\infty}\Bigg(\frac{\|w\|_{A_{p,q}^{-}(\Bbb{R})}}{1+\|w\|_{A_{p,q}^{-}(\Bbb{R})}}
\Bigg)^{p'j/q}\Bigg)^{1/p'}\\
&=&c\|w\|_{A_{p,q}^{-}(\Bbb{R})}^{1/q}\Bigg(\frac{1}{1-(\frac{\|w\|_{A_{p,q}^{-}(\Bbb{R})}}{1+\|w\|_{A_{p,q}^{-}(\Bbb{R})}}
)^{p'/q}}\Bigg)^{1/p'}\\
&\le&c\|w\|_{A_{p,q}^{-}(\Bbb{R})}^{1/q}\|w\|_{A_{p,q}^{-}(\Bbb{R})}^{1/p'}=c\|w\|_{A_{p,q}^{-}(\Bbb{R})}^{1-\alpha}.
\end{eqnarray*}

In the latter  inequality we used the fact that $\|w\|_{A^{-}_{p,q}({\Bbb{R}})}\geq 1$ and the relation
$ \Big(1- \big( \frac{s}{1+s}\big)^{p'/q}\Big)^{-1} = O(s)$ as $s\to \infty$.
\end{proof}




\subsection{Proofs of the main results}

{\em Proof of Theorem \ref{m10}.} (i) {\em Sharpness.} Let us take $0<\epsilon<1$. Suppose that $w(x)=|1-x|^{(1-\epsilon)/p'}$. Then it is easy to check that
$$\|w\|_{A_{p,q}^{+}(\Bbb{R})}=\|w^q\|_{A^{+}_{1+q/p'}(\Bbb{R})}\approx\frac{1}{\epsilon^{q/p'}}.$$
Observe also that for $f(t)=(1-t)^{\epsilon-1}\chi_{(0,1)}(t)$, $\|wf\|_{L^p}\approx c\frac{1}{\epsilon^{1/p}}$.
Now let $0<x<1$. Then we find that
$$M^{+}_{\alpha}f(x)\ge\frac{1}{(1-x)^{1-\alpha}}\int\limits_{x}^{1}f(t)dt=c\frac{1}{\epsilon}(1-x)^{\epsilon-1+\alpha}.$$
Finally,
\begin{equation}\label{m68}
\|wM^{+}_{\alpha}f\|_{L^{q}(\Bbb{R})}\ge \epsilon^{-1-1/q}.
\end{equation}
Thus letting $\epsilon\rightarrow0$ we conclude that the exponent $p'/q(1-\alpha)$ is sharp.

Inequality \eqref{SharpMax} follows from Proposition \ref{GaKo1}, (part (i))    Lemma \ref{m45} and the following  observations:
$$ A^+_{GK}(w^q, w^p, p,q)= \big[ w^{-p'}, w^{-q'}\big]_{Glo} \ll \| w^{-1}\|_{A^-_{q',p'}}^{1-\alpha}
= \|w\|_{A^{+}_{p,q}}^{(1-\alpha)p'/q}.$$
Notice also that $\frac{1}{q'}-\frac{1}{p'}=\alpha$.
\vskip+0.1cm

For part (ii), we show only the sharpness. Inequality \eqref{SharpMax1} follows in the same way as \eqref{SharpMax} was proved. Let $0<\epsilon<1$. Let $w(x)=|x|^{(1-\epsilon)/p'}$. Then
$$\|w\|_{A_{p,q}^{-}(\Bbb{R})}=\|w^q\|_{A^{-}_{1+q/p'}(\Bbb{R})}\approx\frac{1}{\epsilon^{q/p'}}.$$
Suppose that $f(t)=t^{\epsilon-1}\chi_{(0,1)}(t)$.
Suppose that $\|wf\|_{L^p}\approx c\frac{1}{\epsilon^{1/p}}$. If $0<x<1$, then we find that
$$M^{-}_{\alpha}f(x)\ge\frac{1}{x^{1-\alpha}}\int\limits_{0}^{x}f(t)dt=c\frac{1}{\epsilon}x^{\epsilon-1+\alpha}.$$
Finally,
\begin{equation}\label{m67}
\|wM^{-}_{\alpha}f\|_{L^{q}(\Bbb{R})}\ge \epsilon^{-1-1/q}.
\end{equation}
Letting $\epsilon\rightarrow0$ we conclude that the exponent $p'/q(1-\alpha)$ is sharp.
$\;\;\; \Box$
\vskip+0.2cm

{\em Proof} of Theorem \ref{m30} follows immediately from  Lemma \ref{m45}, Theorems \ref{m21} and \ref{m20}.
\vskip+0.2cm

{\em Proof} of Theorem \ref{m32}. We prove only \eqref{m31b}; the proof of the estimate \eqref{m31} follows analogously. \\
For estimate \eqref{m31b}, observe that Theorem \ref{m22} and Theorem \ref{m30} yield
\begin{eqnarray*}
\|\mathcal{R}_{\alpha}\|_{L^p_{w^p}\rightarrow L^q_{v^q}}&\ll& \Big( \|w\|_{A_{p,q}^{-}(\Bbb{R})}^{1-\alpha}+ A^-_{GK}(w^q, w^p, p,q) \Big) \ll \Big( \|w\|_{A_{p,q}^{-}(\Bbb{R})}^{1-\alpha}+ \|w\|_{A_{p,q}^{-}(\Bbb{R})}^{p'/q(1-\alpha)}\Big) \\
&\ll& \|w\|_{A_{p,q}^{-}(\Bbb{R})}^{(1-\alpha)\max\{1,p'/q\}}.
\end{eqnarray*}
Here we used the relations:
\begin{eqnarray*}
A^-_{GK}(w^q, w^p, p,q) \ll  \|w^{-1}\|^{1-\alpha}_{A_{q',p'}^{+}(\Bbb{R})}=c\|w\|^{p'/q(1-\alpha)}_{A_{p,q}^{-}(\Bbb{R})}.
\end{eqnarray*}
The first inequality is  due to Lemma \ref{m45}, while the latter equality is easy to check.

For sharpness first we assume that $p'/q\ge1$. Observe that the following pointwise estimate
$$M_{\alpha}^{-}f\leq \mathcal{R}_{\alpha}f,\indent\indent f\ge0$$
holds.

Then by using same $w$ and $f$ as in the proof of Theorem \ref{m10} we have estimate \eqref{m67} with $M_{\alpha}^{-}$ replaced by $\mathcal{R}_{\alpha}$ showing sharpness. Similarly for $\mathcal{W}_{\alpha}$ observe that the following pointwise estimate:
$$M_{\alpha}^{+}f\leq \mathcal{W}_{\alpha}f,\indent\indent f\ge0$$
holds.

Then by applying the same $w$ and $f$ as in the proof of (ii) of Theorem \ref{m10}, we have estimate \eqref{m68} with $M_{\alpha}^{+}$ replaced by $\mathcal{W}_{\alpha}$ shows sharpness. The case when $p'/q<1$ follows from duality argument. \\
Indeed,
$$\|\mathcal{R}_{\alpha}\|_{L^{p}_{w^p}\rightarrow L^{q}_{w^q}}=\|\mathcal{W}_{\alpha}\|_{L^{q'}_{w^{-q'}}\rightarrow L^{p'}_{w^{-p'}}}\le c\|w^{-1}\|^{(1-\alpha)q/p'}_{A^{+}_{q',p'}(\Bbb{R})}=c\|w\|_{A_{p,q}^{-}(\Bbb{R})}^{1-\alpha}.$$

$\Box$
\section{Strong Maximal  and Multiple Integral Operators}\label{Strong}

Let $T$ be an operator acting on a class of functions defined on $\Bbb{R}^n$. We denote by $T^k$, $k=1\cdots n$, the operator defined on functions in $\Bbb{R}$ by letting $T$ act on the $k$-th variable while keeping the remaining variable fixed. Formally,
$$(T^k f)(x)=(Tf(x_1,x_2\cdots,x_{k-1},\cdot,x_{k+1},\cdots,x_n)(x_k).$$
\begin{defin}\label{m12}
A weight function $w$ satisfies $A_p^{(s)}(\Bbb{R}^n)$ condition $(w\in A_{p}^{(s)}(\Bbb{R}^n))$, $1<p<\infty$, if
$$\|w\|_{A_{p}^{(s)}(\Bbb{R}^n)}:=\sup\limits_{P}\bigg(\frac{1}{|P|}\int\limits_{P}w(x)dx\bigg)\bigg(\frac{1}{|P|}\int\limits_{P}w(x)^{-1/p-1}dx\bigg)^{p-1}<\infty,$$
where the supremum is taken over all parallelepipeds $P$ in $\Bbb{R}^{n}$ with sides parallel to the co-ordinate axes.
\end{defin}
\begin{defin}
Let  $1<p<\infty$. A weight function $w=w(x_1,\cdots,x_n)$ defined on ${\Bbb{R}}^n$ is said to satisfy $A_p$ condition in $x_{i}$ uniformly with respect to other variables $(w\in A_{p}(x_i))$ if
\begin{eqnarray*}
\|w\|_{A_{p}(x_i)}:= \esssup\limits_{\substack{(x_{1},\cdots x_{i-1},\\x_{i+1}\cdots,x_{n})\in{\Bbb{R}}^{n-1}}}\sup_{I} &&\bigg(\frac{1}{|I|}\int\limits_{Q}w(x_{1},\cdots,x_{n})dx_{i}\bigg)\times\\
&&\times\bigg(\frac{1}{|I|}\int\limits_{I}w^{1-p'}(x_{1},\cdots,x_{n}) dx_{i}\bigg)^{p-1}<\infty,
\end{eqnarray*}
where by $I$ we denote a bounded interval in $\Bbb{R}$.
\end{defin}

\begin{rem} $w(x_1,\cdots,x_n)\in A_{p}^{(s)}(\Bbb{R}^n)\Leftrightarrow w\in\bigcap\limits_{i=1}^{n} A_{p}(x_i)$ (see e.g., pp. 453-454 of \cite{GARC}, \cite{KOK}).
\end{rem}
\begin{prop}\label{m12}
Let $T$ be an operator defined on $\Bbb{R}^{n}$ such that
\begin{equation}\label{m13}
\|T^k\|_{L^{p}_{w}(\Bbb{R})}\leq c\|w\|_{A_p(x_k)}^{\gamma(p)}\indent\indent k=1,\cdots,n,
\end{equation}
holds, where $\gamma(p)$ is a constant depending only on $p$. Then the following estimate
$$\|T\|_{L^{p}_{w}(\Bbb{R}^{n})}\leq c(\|w\|_{A_p(x_{1})}\cdots \|w\|_{A_p(x_{n})} )^{\gamma(p)}.$$
holds.
\end{prop}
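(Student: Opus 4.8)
The plan is to prove the multivariable bound by iterating the one-dimensional hypothesis \eqref{m13} one variable at a time, using Fubini's theorem to reduce each step to an application of the one-dimensional estimate in a single coordinate while the other coordinates are held fixed as parameters. First I would write $\|Tf\|_{L^p_w(\Bbb{R}^n)}^p = \int_{\Bbb{R}^n} |Tf(x)|^p w(x)\,dx$ and, using the decomposition $T = T^1 T^2 \cdots T^n$ (valid because $T^k$ acts only on the $k$-th variable, so the operators commute and their composition is $T$), peel off the action in the first variable. Fixing $(x_2,\dots,x_n)$ and integrating in $x_1$ first, the inner integral is $\int_{\Bbb{R}} |T^1(g)(x_1)|^p w(x_1,\dots,x_n)\,dx_1$ where $g = g(\cdot\,;x_2,\dots,x_n)$ is the function obtained by applying $T^2\cdots T^n$ to $f$ in the remaining variables; by \eqref{m13} with $k=1$ this is bounded by $c^p \|w(\cdot,x_2,\dots,x_n)\|_{A_p(x_1)}^{p\gamma(p)} \int_{\Bbb{R}} |g(x_1;x_2,\dots,x_n)|^p w(x_1,\dots,x_n)\,dx_1$, and the $A_p(x_1)$-characteristic appearing here is, by definition, bounded uniformly by $\|w\|_{A_p(x_1)}$.

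Next I would pull the constant $c^p\|w\|_{A_p(x_1)}^{p\gamma(p)}$ out of the remaining integral over $(x_2,\dots,x_n)$ and be left with $c^p\|w\|_{A_p(x_1)}^{p\gamma(p)} \int_{\Bbb{R}^n} |T^2\cdots T^n f(x)|^p w(x)\,dx$. Then I would repeat the argument, this time singling out the second variable: fix $(x_1,x_3,\dots,x_n)$, integrate in $x_2$, apply \eqref{m13} with $k=2$, and note that the relevant $A_p(x_2)$-characteristic of the slice is $\le \|w\|_{A_p(x_2)}$. Iterating through all $n$ variables yields
$$\|Tf\|_{L^p_w(\Bbb{R}^n)}^p \le c^p \big(\|w\|_{A_p(x_1)}\cdots\|w\|_{A_p(x_n)}\big)^{p\gamma(p)} \|f\|_{L^p_w(\Bbb{R}^n)}^p,$$
and taking $p$-th roots gives the claimed inequality.

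The main point requiring care — and the only genuine subtlety — is the measurability and integrability bookkeeping: one must know that for a.e. fixed value of the remaining variables the slice $w(\cdot,x_2,\dots,x_n)$ is a well-defined weight in $A_p(x_1)$ with characteristic $\le\|w\|_{A_p(x_1)}$ (this is exactly the content of the definition of $A_p(x_i)$ as an essential supremum over the other variables, together with the remark identifying $A_p^{(s)}(\Bbb{R}^n)$ with $\bigcap_i A_p(x_i)$), and that the intermediate functions $T^{k+1}\cdots T^n f$ lie in the appropriate $L^p_w$ spaces so that the one-dimensional estimate is applicable at each stage. The latter follows inductively from the bound obtained at the previous stage. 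Since $\gamma(p)$ is a fixed constant depending only on $p$, no loss occurs in the exponent, and the constant $c$ in the conclusion is $c^n$ where $c$ is the constant in \eqref{m13} (absorbed into the symbol $c$ per the paper's convention). Everything here is routine Fubini-type manipulation; there is no deep obstacle, which is why the statement is recorded as a Proposition rather than a Theorem.
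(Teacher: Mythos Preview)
Your proposal is correct and follows essentially the same approach as the paper: the paper's proof (given explicitly for $n=2$) writes $T = T^1 \circ T^2$, applies Fubini, uses the one-dimensional bound \eqref{m13} in $x_1$ with the uniform characteristic $\|w\|_{A_p(x_1)}$, switches the order of integration, and repeats in $x_2$. You are in fact more careful than the paper in flagging the measurability and slice-weight issues, which the paper leaves implicit.
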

\begin{proof}
For simplicity we give proof for $n=2$ the proof general case is the same. Suppose that $f\ge0$. Using \eqref{m12} two times and Fubini's theorem we have,
\begin{eqnarray*}
\|Tf\|^p_{L^p_w(\Bbb{R}^2)}&=&\int\limits_{\Bbb{R}}\int\limits_{\Bbb{R}}(Tf(x_{1},x_{2}))^{p}w(x_{1},x_{2})dx_{1}dx_{2}\\
&=&\int\limits_{\Bbb{R}}\Bigg(\int\limits_{\Bbb{R}}(T^{1}(T^{2}f(\cdot,x_{2})))(x_{1})^{p}w(x_{1},x_{2})dx_{1}\Bigg)dx_{2}\\
&\le&c\|w\|_{A_{p}(x_{1})}^{p\gamma(p)}\int\limits_{\Bbb{R}}\Bigg(\int\limits_{\Bbb{R}}(T^{2}f(x_{1},x_{2}))^{p}w(x_{1},x_{2})dx_{1}\Bigg)dx_{2}\\
&=&c\|w\|_{A_{p}(x_{1})}^{p\gamma(p)}\int\limits_{\Bbb{R}}\Bigg(\int\limits_{\Bbb{R}}(T^{2}f(x_{1},x_{2}))^{p}w(x_{1},x_{2})dx_{2}\Bigg)dx_{1}\\
&=&c(\|w\|_{A_{p}(x_{1})}\|w\|_{A_{p}(x_{2})})^{p\gamma(p)}\|f\|^p_{L^{p}_{w}(\Bbb{R}^{2})}
\end{eqnarray*}
\end{proof}

\subsection{Strong Hardy--Littlewood Maximal Functions and Multiple Singular Integrals}

The following theorem is due to S. M. Buckley \cite{Buck}.
\begin{thma}\label{m66}
If $w\in A_p({\Bbb{R}}^n)$, then $\|Mf\|_{L^{p}_{w}({\Bbb{R}}^n)}\le c_{n,p}\|w\|_{A_{p}({\Bbb{R}}^n)}^{1/(p-1)}\|f\|_{L^{p}_{w}({\Bbb{R}}^n)}$. The power $1/(p-1)$ is best possible.
\end{thma}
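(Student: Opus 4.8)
The plan is to reproduce, in the $n$-dimensional setting, the scheme already used above for Theorem \ref{m8}, i.e. Buckley's argument from \cite{Buck}: interpolate between weak-type bounds for $M$ at two exponents $p-\epsilon$ and $p+\epsilon$ straddling $p$, where $\epsilon$ is chosen comparable to $\|w\|_{A_p({\Bbb{R}}^n)}^{1-p'}$, and keep careful track of all constants. The only substantive change relative to the one-sided case is that cubes replace one-sided intervals and the two-sided reverse H\"older inequality replaces the weak one of Lemma \ref{z3}.

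First I would record the auxiliary facts. \emph{(a) Self-improvement.} If $w\in A_p({\Bbb{R}}^n)$ then $w\in A_{p-\epsilon}({\Bbb{R}}^n)$ with $\epsilon\approx c_{n,p}\|w\|_{A_p({\Bbb{R}}^n)}^{1-p'}$ and $\|w\|_{A_{p-\epsilon}({\Bbb{R}}^n)}\le C\|w\|_{A_p({\Bbb{R}}^n)}$. This is the cube analog of Lemma \ref{z1}; it rests on a quantitative reverse H\"older inequality whose self-improvement exponent $\delta$ is comparable to $1/(c_n\|w\|_{A_p({\Bbb{R}}^n)})$, applied either directly to $w$ or, via the equivalence $w\in A_p\Leftrightarrow\sigma:=w^{1-p'}\in A_{p'}$, to $\sigma$. \emph{(b) Monotonicity.} Trivially $w\in A_{p+\epsilon}({\Bbb{R}}^n)$ with $\|w\|_{A_{p+\epsilon}({\Bbb{R}}^n)}\le\|w\|_{A_p({\Bbb{R}}^n)}$ (by the power-mean inequality applied to $w^{-1}$). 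Together with these I would use Muckenhoupt's weak-type estimate with explicit characteristic dependence: for $1<r<\infty$, $M$ maps $L^r_w({\Bbb{R}}^n)$ into $L^{r,\infty}_w({\Bbb{R}}^n)$ with norm at most $c_{n,r}\|w\|_{A_r({\Bbb{R}}^n)}^{1/r}$, the $n$-dimensional analog of Theorem \ref{z7}.

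Next I would run Marcinkiewicz interpolation (the $n$-dimensional version of Theorem \ref{z9}) for $M$, relative to $d\mu=w\,dx$, between the weak $(p-\epsilon,p-\epsilon)$ and weak $(p+\epsilon,p+\epsilon)$ bounds. By (a) and (b) the corresponding weak-type norms satisfy $R_0\le c\,\|w\|_{A_p}^{1/(p-\epsilon)}$ and $R_1\le c\,\|w\|_{A_p}^{1/(p+\epsilon)}$, the factor $C^{1/(p-\epsilon)}$ being harmless since $p-\epsilon\ge p/2$. Choosing $t\in(0,1)$ with $\frac1p=\frac{1-t}{p-\epsilon}+\frac{t}{p+\epsilon}$ gives $R_0^{1-t}R_1^t\le c\,\|w\|_{A_p}^{1/p}$, while the interpolation constant is $C_t^p=\frac{2^{p}}{p}\big(\frac{p+\epsilon}{\epsilon}+\frac{p-\epsilon}{\epsilon}\big)=\frac{2^{p+1}}{\epsilon}$, so $C_t\approx\epsilon^{-1/p}\approx\|w\|_{A_p}^{1/(p(p-1))}$, using $\epsilon\approx c\,\|w\|_{A_p}^{1-p'}=c\,\|w\|_{A_p}^{-1/(p-1)}$. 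Multiplying the two contributions and adding exponents, $\frac1p+\frac{1}{p(p-1)}=\frac1{p-1}$, which yields $\|M\|_{L^p_w({\Bbb{R}}^n)}\le c_{n,p}\|w\|_{A_p({\Bbb{R}}^n)}^{1/(p-1)}$, the asserted bound.

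Finally, for sharpness I would use the classical power-weight example: for small $\delta>0$ set $w(x)=|x|^{(1-\delta)n(p-1)}$ and $f(x)=|x|^{(\delta-1)n}\chi_{B(0,1)}(x)$. A direct computation gives $\|w\|_{A_p({\Bbb{R}}^n)}^{1/(p-1)}\approx\delta^{-1}$ and $\|f\|_{L^p_w({\Bbb{R}}^n)}^p\approx\int_{B(0,1)}|x|^{n(\delta-1)}\,dx\approx\delta^{-1}<\infty$, while for $|x|<1$ one has $Mf(x)\gtrsim|x|^{-n}\int_{B(0,|x|)}f\approx\delta^{-1}f(x)$; hence $\|Mf\|_{L^p_w}\gtrsim\delta^{-1}\|f\|_{L^p_w}\approx\|w\|_{A_p}^{1/(p-1)}\|f\|_{L^p_w}$, and letting $\delta\to0$ shows no smaller power, nor the refinement with a slowly growing $\psi$, can hold. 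I expect the one genuinely delicate point to be the quantitative reverse H\"older step underlying (a): one must obtain the self-improvement exponent comparable to $1/\|w\|_{A_p}$ (equivalently $\epsilon\approx\|w\|_{A_p}^{1-p'}$), exactly the kind of estimate isolated in Lemma \ref{z3} for the one-sided classes; a cruder dependence there would propagate into a worse final exponent.
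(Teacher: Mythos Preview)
Your proposal is correct, but there is no proof in the paper to compare against: Theorem~\ref{m66} is stated without proof as a known result of Buckley~\cite{Buck}, and the paper only \emph{uses} it (in Theorem~\ref{m14}). What you have written is precisely Buckley's original argument, and it is also the two-sided prototype of the proof the paper gives for the one-sided analog Theorem~\ref{m8}; so in spirit your route and the paper's are identical. Your tracking of the constants through Marcinkiewicz interpolation and your sharpness example are both accurate.
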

Let $f$ be a locally integrable function on $\Bbb{R}^{n}$. Then we define strong Hardy--Littlewood maximal operator as
$$\big(M^{(s)}f\big)(x)=\sup\limits_{P\ni x}\frac{1}{|P|}\int\limits_{P}|f(y)|dy,\indent\indent x\in\Bbb{R}^{n},$$
where the supremum is taken over all parallelepipeds $P\ni x$ in $\Bbb{R}^{n}$ with sides parallel to the co-ordinate axes.

\begin{thm}\label{m14}
Let $1<p<\infty$ and $w$ be a weight function on $\Bbb{R}^{n}$ such that $w\in  A^{(s)}_{p}({\Bbb{R}}^n)$. Then there exists a constant $c$ depending only on $n$ and $p$ such that the following inequality
\begin{equation}\label{m3}
\|M^{(s)}f\|_{L^{p}_{w}({\Bbb{R}}^n)}\le c\Bigg(\prod\limits_{i=1}^{n}\|w\|_{A_p(x_{i})}\Bigg)^{1/(p-1)}\|f\|_{L^{p}_{w}({\Bbb{R}}^n)}
\end{equation}
holds, for all $f\in L^{p}_w({\Bbb{R}}^n)$ . Further, the power $1/(p-1)$ in estimate \eqref{m3} is sharp.
\end{thm}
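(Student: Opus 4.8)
The plan is to combine the one-dimensional sharp bound for $M^+$ and $M^-$ (Theorem \ref{m8}) — or rather its symmetric two-sided companion, the one-dimensional case of Buckley's theorem — with the tensor-product mechanism of Proposition \ref{m12}. First I would observe that the strong maximal operator $M^{(s)}$ on $\Bbb{R}^n$ is pointwise dominated by the composition $M^1 M^2 \cdots M^n$, where $M^k$ denotes the (ordinary, centered or uncentered — they are comparable) one-dimensional Hardy--Littlewood maximal operator acting on the $k$-th variable with the remaining variables frozen; this is the classical observation going back to Jessen--Marcinkiewicz--Zygmund. Indeed, any parallelepiped $P = I_1 \times \cdots \times I_n$ satisfies $\frac{1}{|P|}\int_P |f| \le \big(M^1 M^2 \cdots M^n |f|\big)(x)$ for $x \in P$, so taking the supremum over $P \ni x$ gives $M^{(s)} f \le M^1 \cdots M^n |f|$.

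Next, I would note that the one-variable operator $M$ satisfies the hypothesis \eqref{m13} of Proposition \ref{m12} with $\gamma(p) = 1/(p-1)$: this is precisely the one-dimensional instance of Buckley's theorem (Theorem \ref{m66} with $n=1$), which gives $\|M^k\|_{L^p_w(\Bbb{R})} \le c\,\|w\|_{A_p(x_k)}^{1/(p-1)}$ uniformly in the frozen variables, since the operator norm of $M$ on $L^p_w(\Bbb{R})$ is controlled by $\|w\|_{A_p(\Bbb{R})}^{1/(p-1)}$ and taking the essential supremum over the frozen variables of the slice-wise bound is exactly how $\|w\|_{A_p(x_k)}$ is defined. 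Applying the reasoning of Proposition \ref{m12} (iterating the one-variable bound $n$ times and using Fubini to reorder the integrations) to the operator $T = M^1 \cdots M^n$ — or, more cleanly, applying Proposition \ref{m12} successively to each factor — yields
\begin{equation*}
\|M^1 \cdots M^n f\|_{L^p_w(\Bbb{R}^n)} \le c\Bigg(\prod_{i=1}^n \|w\|_{A_p(x_i)}\Bigg)^{1/(p-1)} \|f\|_{L^p_w(\Bbb{R}^n)},
\end{equation*}
and combining with the pointwise domination $M^{(s)} f \le M^1 \cdots M^n |f|$ gives \eqref{m3}. (Here one uses that $w \in A_p^{(s)}(\Bbb{R}^n)$ is equivalent to $w \in \bigcap_i A_p(x_i)$, as recorded in the remark preceding Proposition \ref{m12}, so each factor $\|w\|_{A_p(x_i)}$ is finite.)

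For sharpness of the exponent $1/(p-1)$, I would reduce to the one-dimensional sharpness already established in Theorem \ref{m8}(i) (or Buckley's original example) by a tensorization: fix $n-1$ of the variables trivially and let the weight and test function depend on only one variable, say $x_1$. Take $w(x) = |1-x_1|^{(1-\eps)(p-1)}$ (extended as a constant in the other variables, on a bounded box to keep things integrable, or handled via a standard localization), and $f(x) = (1-x_1)^{\eps(p-1)-1}\chi_{(0,1)}(x_1)$ times a fixed bump in the other variables; then $\|w\|_{A_p(x_1)}^{1/(p-1)} \approx 1/\eps$ while $\|w\|_{A_p(x_i)} \approx 1$ for $i \ge 2$, so the right side of \eqref{m3} is $\approx 1/\eps$, and since $M^{(s)} f$ dominates the one-dimensional $M^+$ acting in $x_1$ we get $\|M^{(s)} f\|_{L^p_w} \gtrsim \eps^{-1}\|f\|_{L^p_w}$; letting $\eps \to 0$ shows no smaller power works. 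I expect the main technical nuisance to be the bookkeeping in the sharpness example — making the one-variable weight genuinely belong to $A_p^{(s)}(\Bbb{R}^n)$ with the product of characteristics behaving as claimed — while the positive direction is essentially immediate from Proposition \ref{m12} and the pointwise domination.
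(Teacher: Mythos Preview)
Your upper-bound argument is exactly the paper's: pointwise domination $M^{(s)}f\le M^{1}\circ\cdots\circ M^{n}|f|$, the one-dimensional Buckley bound (Theorem~\ref{m66} with $n=1$) giving $\gamma(p)=1/(p-1)$, and then Proposition~\ref{m12}.

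For sharpness the paper takes a slightly different and cleaner route than your one-variable reduction. Instead of freezing $n-1$ coordinates, it uses the fully tensorized product weight $w(x_1,x_2)=|x_1|^{(1-\eps)(p-1)}|x_2|^{(1-\eps)(p-1)}$ and product test function $f(x_1,x_2)=x_1^{\eps(p-1)-1}\chi_{(0,1)}(x_1)\,x_2^{\eps(p-1)-1}\chi_{(0,1)}(x_2)$ (for $n=2$); then $\|w\|_{A_p(x_i)}=\|w_i\|_{A_p(\Bbb R)}$ exactly, so $\big(\prod_i\|w\|_{A_p(x_i)}\big)^{1/(p-1)}\approx\eps^{-2}$, and the single rectangle $[0,x_1]\times[0,x_2]$ already gives $M^{(s)}f\ge c\eps^{-2}f$. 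This sidesteps precisely the ``technical nuisance'' you anticipated---there is no need for a bump in the transverse variables, no localization, and no appeal to Lebesgue differentiation to recover a one-dimensional maximal function from $M^{(s)}$. Your approach also works, but the product example is the natural one here and matches the structure of the bound \eqref{m3}.
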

\begin{proof}
We can estimate $M^{(s)}$ as follows
$$\big( M^{(s)}f\big) (x)\leq \big( M^{1}\circ M^{2}\circ\dots\circ M^{n}\big) f(x),$$
where
$$\big( M^{k}f\big) (x_{1},\cdots x_{n})=\sup\limits_{ I_{k}\ni x_{k}}\frac{1}{|I_k|}\int\limits_{I_k}|f(x_{1},\cdots,x_{k-1},t,x_{k+1},\cdots,x_{n})|dt.$$
Now by Theorem \ref{m66} (for $n=2$) we have $\gamma(p)=\frac{1}{p-1}$ and consequently, by Proposition \ref{m12} we find that
$$\|M^{(s)}f\|_{L^{p}_{w}({\Bbb{R}}^n)}\le c\Bigg(\prod\limits_{i=1}^{n}\|w\|_{A_p(x_{i})}\Bigg)^{1/(p-1)}\|f\|_{L^{p}_{w}({\Bbb{R}}^n)}.$$
For sharpness we consider the case for $n=2$. Observe that when $w$ is of product type,  i.e. $w(x_{1},x_{2})=w_{1}(x_{1})w_{2}(x_{2})$, then
\begin{equation} \label{PR}
\|w\|_{A_{p}(x_{1})}=\|w_1\|_{A_{p}(\Bbb{R})},\indent\indent \|w\|_{A_{p}(x_{2})}=\|w_2\|_{A_{p}(\Bbb{R})}.
\end{equation}
Let us take $0<\epsilon<1$. Suppose that $w(x_{1},x_{2})=|x_{1}|^{(1-\epsilon)(p-1)}|x_{2}|^{(1-\epsilon)(p-1)}$. Then it is easy to check that $$(\|w\|_{A_{p(x_1)}}\|w\|_{A_{p(x_2)}})^{1/(p-1)}\approx\frac{1}{\epsilon^2}.$$
Observe also that for
$$f(x_{1},x_{2})={x_{1}}^{\epsilon(p-1)-1}\chi_{(0,1)}(x_{1}){x_{2}}^{\epsilon(p-1)-1}\chi_{(0,1)}(x_{2}),$$
we have $\|f\|^p_{L^p_w}\approx\frac{1}{\epsilon^2}$.
Now let $0<x_{1},x_{2}<1$. Then we find that the following estimate
$$\big( M^{(s)}f\big) (x_{1},x_{2})\ge\frac{1}{x_{1}x_{2}}\int\limits_{0}^{x_{1}}\int\limits_{0}^{x_{2}}f(t,\tau)dtd\tau=c\frac{1}{\epsilon^{2}}f(x,y)$$
holds. Finally
$$\|M^{(s)}f\|_{L^{p}_w}\ge c\frac{1}{\epsilon^2}\|f\|_{L^{p}_{w}}$$
Thus we have the sharpness in  \eqref{m3}.
\end{proof}
Let us denote by ${\mathcal{H}}^{(n)}$ the Hilbert transform with product kernels (or $n$-dimensional Hilbert transform) defined by $$\big( {\mathcal{H}}^{(n)}f\big) (x)=\lim\limits_{\substack{\epsilon_{1}\rightarrow0\\\cdots\\ \epsilon_{n}\rightarrow0}}\indent\int\limits_{|x_{1}-t_{1}|>\epsilon_{1}}\cdots\int\limits_{|x_{n}-t_{n}|>\epsilon_{n}}\frac{f(t_{1},\cdots,t_{n})}{(x_{1}-t_{1})
\cdots(x_{n}-t_{n})}dt_{1}\cdots dt_{n}.$$
We denote ${\mathcal{H}}^{(1)}=: { {\mathcal{H}} }$. Notice that for each $x\in\Bbb{R}^{n}$, we can write
\begin{equation}\label{m69}
\big( {\mathcal{H}}^{(n)}f\big) (x)=\big(\mathcal{H}^{1}\circ\cdots\circ \mathcal{H}^{n}\big)f(x)
\end{equation}
where,
$$\big( \mathcal{H}^{k}f\big) (x)=\lim\limits_{\epsilon_{k}\rightarrow 0} \int\limits_{|x_{k}-y_{k}|>\varepsilon_k} \frac{f(x_1, \cdots, y_k, \cdots, x_n)}{x_{k}-y_{k}}dy.$$
The following theorem is due to S. Petermichl \cite{PET1}.
\begin{thma}\label{m70}
Let $1<p<\infty$.  Then there exist positive constant $c$ depending only on  $p$ such that for all weights $w\in A_p({\Bbb{R}})$ we have
\begin{equation}\label{f2}
\|\mathcal{H}f\|_{L^{p}_w({\Bbb{R}})}\leq c\|w\|_{A_{p}({\Bbb{R}})}^{\beta}\|f\|_{L^p_w({\Bbb{R}})}, \;\;\; f\in L^p_w({\Bbb{R}}),
\end{equation}
where $\beta=\max\{1,p'/p\}$. Moreover, the exponent $\beta$ in this estimate is sharp.
\end{thma}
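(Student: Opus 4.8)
\emph{Proof proposal.} Since $p'/p=1/(p-1)$, the exponent is $\beta=\max\{1,1/(p-1)\}$, and this is precisely what the sharp form of the Rubio de Francia extrapolation theorem \cite{DRAG-GRAF-PR} delivers: if a (sub)linear operator $T$ obeys $\|Tf\|_{L^2_u(\Bbb{R})}\le c\,\|u\|_{A_2(\Bbb{R})}\|f\|_{L^2_u(\Bbb{R})}$ for every $u\in A_2(\Bbb{R})$, then $\|Tf\|_{L^p_w(\Bbb{R})}\le c_p\,\|w\|_{A_p(\Bbb{R})}^{\max\{1,1/(p-1)\}}\|f\|_{L^p_w(\Bbb{R})}$ for all $1<p<\infty$ and all $w\in A_p(\Bbb{R})$. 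Thus the whole matter reduces to the endpoint $p=2$, i.e. to the linear bound $\|\mathcal{H}f\|_{L^2_w(\Bbb{R})}\le c\,\|w\|_{A_2(\Bbb{R})}\|f\|_{L^2_w(\Bbb{R})}$.

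For $p=2$ the plan is to use Petermichl's dyadic representation: up to an absolute constant $\mathcal{H}$ is the average, over all translations and dyadic scalings of the standard dyadic grid $\mathcal{D}$ with respect to a natural probability measure, of a single Haar shift operator $\mathbb{S}_{\mathcal{D}}$ (the operator sending the Haar function $h_I$ to a normalized difference of the Haar functions of the two children of $I$). Since $\|w\|_{A_2}$ is invariant under translations and dilations, it suffices to bound $\|\mathbb{S}_{\mathcal{D}}\|_{L^2_w\to L^2_w}$ by $c\,\|w\|_{A_2}$ uniformly in the grid. This linear $A_2$ estimate for a dyadic model operator is the technical heart, and is where I expect the real difficulty to lie; the reduction steps around it are by now routine. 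I would obtain it either via the Bellman-function argument of Petermichl and Volberg — reducing the two-weight bound for $\mathbb{S}_{\mathcal{D}}$ to a $T1$-type testing inequality which is then verified from the $A_2$ condition by a stopping-time decomposition — or via sparse domination: establish a pointwise bound $|\mathbb{S}_{\mathcal{D}}f(x)|\le c\sum_{Q\in\mathcal{S}}\big(\tfrac{1}{|Q|}\int_Q|f|\big)\chi_Q(x)$ for a sparse family $\mathcal{S}=\mathcal{S}(f)$, and then deduce the linear $A_2$ bound for the sparse operator by writing $f=(f\sigma^{-1})\sigma$ with $\sigma=w^{-1}$ and using the Carleson packing property of sparse families together with the very definition of $\|w\|_{A_2}$. (Alternatively one may simply invoke Hyt\"onen's $A_2$ theorem \cite{HYT}.)

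\emph{Sharpness.} For $1<p\le2$ (so $\beta=1/(p-1)$) take, for small $\epsilon>0$, the power weight $w_\epsilon(x)=|x|^{(1-\epsilon)(p-1)}$; testing the $A_p$ supremum on intervals centered at the origin gives $\|w_\epsilon\|_{A_p(\Bbb{R})}\approx\epsilon^{-(p-1)}$, hence $\|w_\epsilon\|_{A_p}^{1/(p-1)}\approx\epsilon^{-1}$. With $f_\epsilon(x)=x^{-1+c\epsilon/p'}\chi_{(0,1)}(x)$ for a fixed $c>1$ (so that $f_\epsilon\in L^p_{w_\epsilon}$ and $\|f_\epsilon\|_{L^p_{w_\epsilon}}\approx\epsilon^{-1/p}$), the substitution $t=xs$ shows that $\mathcal{H}f_\epsilon(x)\ge c\,\epsilon^{-1}f_\epsilon(x)$ for $x$ in a fixed subinterval of $(0,1)$, whence $\|\mathcal{H}f_\epsilon\|_{L^p_{w_\epsilon}}\ge c\,\epsilon^{-1}\|f_\epsilon\|_{L^p_{w_\epsilon}}$ — exactly the mechanism used in the Buckley-type sharpness arguments of Theorems \ref{m8} and \ref{m10} above; letting $\epsilon\to0$ shows that $\|w_\epsilon\|_{A_p}^{1/(p-1)-\lambda}$ cannot control $\|\mathcal{H}\|_{L^p_{w_\epsilon}}$ for any $\lambda>0$. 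For $2<p<\infty$ (so $\beta=1$) one argues either directly with $w_\epsilon(x)=|x|^{\epsilon-1}$ (for which $\|w_\epsilon\|_{A_p}\approx\epsilon^{-1}$) and $f_\epsilon=\chi_{(0,1)}$, exploiting the logarithmic blow-up of $\mathcal{H}\chi_{(0,1)}$ at the origin against the weight, or by duality: $\mathcal{H}^*=-\mathcal{H}$ gives $\|\mathcal{H}\|_{L^p_w\to L^p_w}=\|\mathcal{H}\|_{L^{p'}_{w^{1-p'}}\to L^{p'}_{w^{1-p'}}}$ while $\|w^{1-p'}\|_{A_{p'}}=\|w\|_{A_p}^{1/(p-1)}$, so the sharpness already obtained at the exponent $p'\in(1,2)$ transfers to the exponent $1$ at $p$. $\Box$
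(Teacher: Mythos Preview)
This theorem is not proved in the paper at all: it is quoted as a known result of Petermichl \cite{PET1} (it appears in the lettered ``Theorem'' environment reserved for cited statements), and the paper merely invokes it in the proof of Theorem~\ref{m15}. So there is no in-paper argument to compare against.

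That said, your outline matches precisely the route the paper itself attributes to Petermichl in the Introduction: the $p=2$ linear bound is obtained by averaging Haar shifts (for which you correctly identify the Bellman-function / sparse alternatives, or simply the appeal to \cite{HYT}), and the passage to general $p$ is the sharp Rubio de Francia extrapolation of \cite{DRAG-GRAF-PR}. Your sharpness argument --- power weights $|x|^{(1-\epsilon)(p-1)}$ with test functions $x^{-1+c\epsilon}\chi_{(0,1)}$ for $1<p\le 2$, and duality using $\mathcal{H}^*=-\mathcal{H}$ together with $\|w^{1-p'}\|_{A_{p'}}=\|w\|_{A_p}^{1/(p-1)}$ for $p>2$ --- is the same mechanism the paper employs in the proof of Theorem~\ref{m15} (there in product form), so it is entirely consistent with the paper's methods.
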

 \begin{thm}\label{m15}
Let $1<p<\infty$ and $w$ be a weight function on $\Bbb{R}^{n}$ such that $w\in{A^{(s)}_{p}}(\Bbb{R}^n)$. Then there exists a constant $c$ depending only on $n$ and $p$ such that the following inequality
\begin{equation}\label{m16}
\|{\mathcal{H}}^{(n)}f\|_{L^{p}_{w}({\Bbb{R}}^n)}\le c(\|w\|_{A_p(x_{1})}\cdots\|w\|_{A_p(x_{n})})^{\max\{1,p'/p\}}\|f\|_{L^{p}_{w}({\Bbb{R}}^n)}
\end{equation}
holds for all $f\in L^{p}_w$ . Further the power ${\max\{1, p'/p\}}$  in estimate \eqref{m16} is sharp.
\end{thm}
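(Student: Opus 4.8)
The plan is to exploit the factorization \eqref{m69}, namely that ${\mathcal{H}}^{(n)} = {\mathcal{H}}^1 \circ \cdots \circ {\mathcal{H}}^n$, together with the one-dimensional sharp bound of Theorem \ref{m70} applied in each variable and the iteration mechanism of Proposition \ref{m12}. First I would observe that Theorem \ref{m70} says precisely that the one-dimensional Hilbert transform ${\mathcal{H}}$ satisfies $\|{\mathcal{H}}\|_{L^p_w({\Bbb{R}})} \le c\|w\|_{A_p({\Bbb{R}})}^{\gamma(p)}$ with $\gamma(p) = \max\{1, p'/p\}$. Consequently, for the one-variable operator ${\mathcal{H}}^k$ (the Hilbert transform acting on the $k$-th coordinate with the others frozen), the hypothesis \eqref{m13} of Proposition \ref{m12} holds with this same $\gamma(p)$, where the relevant one-variable $A_p$ characteristic is $\|w\|_{A_p(x_k)}$ (the supremum over intervals in the $k$-th variable, uniform in the rest). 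Applying Proposition \ref{m12} with $T = {\mathcal{H}}^{(n)}$ immediately yields the upper bound \eqref{m16} with exponent $\max\{1,p'/p\}$.

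The one technical point to check before invoking Proposition \ref{m12} is that ${\mathcal{H}}^{(n)}$, viewed through \eqref{m69}, really is the composition $T^1 \circ \cdots \circ T^n$ of the single operator $T = {\mathcal{H}}$ acting successively in each variable; this is exactly the content of \eqref{m69}, so no new work is needed. One should also note that $A_p^{(s)}({\Bbb{R}}^n)$ membership is equivalent to $w \in \bigcap_{i=1}^n A_p(x_i)$ (the remark preceding Proposition \ref{m12}), so each application of the one-dimensional estimate is legitimate with a finite characteristic. The Fubini step inside the proof of Proposition \ref{m12} is justified because the kernel of ${\mathcal{H}}^{(n)}$ acts one variable at a time and the iterated estimates are for nonnegative rearrangements; one first proves the inequality for, say, bounded compactly supported $f$ and then extends by density.

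For sharpness I would use a product weight and a product test function, reducing to the one-dimensional sharpness in Theorem \ref{m70}. Concretely, for $n=2$, take $w(x_1,x_2) = w_1(x_1)w_2(x_2)$ where each $w_i$ is an extremal weight realizing the sharp exponent for ${\mathcal{H}}$ on ${\Bbb{R}}$; by \eqref{PR} we have $\|w\|_{A_p(x_i)} = \|w_i\|_{A_p({\Bbb{R}})}$. With $f(x_1,x_2) = f_1(x_1)f_2(x_2)$ the corresponding extremal functions, the factorization \eqref{m69} gives ${\mathcal{H}}^{(2)}f = ({\mathcal{H}}f_1)\otimes({\mathcal{H}}f_2)$, so both sides of \eqref{m16} factor as products over the two variables, and the ratio $\|{\mathcal{H}}^{(2)}f\|_{L^p_w}/\|f\|_{L^p_w}$ equals the product of the two one-dimensional ratios, each of which forces its own factor of $\|w_i\|_{A_p({\Bbb{R}})}^{\max\{1,p'/p\}}$ up to the slowly-varying obstruction ruled out in Theorem \ref{m70}. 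The general $n$ is identical with an $n$-fold tensor product. I expect the main (modest) obstacle to be bookkeeping in the sharpness argument: verifying that the product construction does not lose in the constants and that the failure of any smaller exponent genuinely descends from the one-variable case --- but since everything tensorizes cleanly, this is routine rather than deep.
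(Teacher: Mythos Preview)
Your proposal is correct and follows essentially the same route as the paper: the upper bound via the factorization \eqref{m69}, Theorem \ref{m70}, and Proposition \ref{m12} is exactly what the paper does, and the sharpness via product weights and product test functions is the same idea. The only cosmetic difference is that the paper works out the sharpness with explicit power weights $|x_i|^{(1-\epsilon)}$ and splits into the cases $p=2$, $1<p<2$, and $p>2$ (the last by duality), whereas your tensor-product formulation handles all $p$ at once by invoking the one-dimensional extremizers abstractly; both are fine.
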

\begin{proof}
Using representation \eqref{m69},  Proposition \ref{m12} and Theorem \ref{m70}, we have that
$$\|{\mathcal{H}}^{(n)}f\|_{L^{p}_{w}({\Bbb{R}}^n)} \ll\big(\|w\|_{A_p(x_{1})}\cdots\|w\|_{A_p(x_{n})}\big)^{\max\{1, p'/p\}}\|f\|_{L^{p}_{w}({\Bbb{R}}^n)}.$$
Let $n=2$. For sharpness we observe that when $w$ is of product type i.e. $w(x_{1},x_{2})=w_{1}(x_{1})w_{2}(x_{2})$, then inequality \eqref{PR} holds.
Let us first derive sharpness for $p=2$. Let us take $0<\epsilon<1$ and let $w(x_{1},x_{2})=w_1(x_1)w_2(x_2)$, where $w_1(x_1)= |x_{1}|^{1-\epsilon}$ and $w_2(x_2)=|x_{2}|^{1-\epsilon}$. Then it is easy to check that \eqref{PR} holds. Observe also that for

\begin{equation}\label{m78}
f(x_{1},x_{2})={x_{1}}^{\epsilon-1}\chi_{(0,1)}(x_{1}){x_{2}}^{\epsilon-1}\chi_{(0,1)}(x_{2}),
\end{equation}
$\|f\|^2_{L^2_w}\approx\frac{1}{\epsilon}.$
Now let $0<x_{1},x_{2}<1$. Then we find that
$$\|{\mathcal{H}}^{(2)}f\|_{L^{2}_w({\Bbb{R}}^2)}\ge 4\epsilon^{-3}.$$
Letting $\epsilon\rightarrow0$ we have sharpness in  \eqref{m16} for $p=2$ i.e., the estimate
$$\|{\mathcal{H}}^{(2)}\|_{L^{2}_{w}({\Bbb{R}}^2)}\ll\|w\|_{A_{2}(x_1)}\|w\|_{A_{2}(x_2)}$$
is sharp.\\
Let $1<p<2$. Suppose that $0<\epsilon<1$ and that $w(x_{1},x_{2})=|x_{1}|^{(1-\epsilon)}|x_{2}|^{(1-\epsilon)}$. Then it is easy to check that $$(\|w\|_{A_{p(x_1)}}\|w\|_{A_{p(x_2)}})^{1/(p-1)}\approx\frac{1}{\epsilon^2}.$$
Observe also that for the function defined by \eqref{m78} the relation
$\|f\|_{L^p_w}\approx(\frac{1}{\epsilon^2})^{\frac{1}{p}}$ holds.
Now let $0<x_{1},x_{2}<1$. Then we find that following estimates
$$\|{\mathcal{H}}^{(2)}f\|_{L^{p}_w({\Bbb{R}}^2)}\ge \frac{1}{\epsilon^2} \|f\|_{L^p_w({\Bbb{R}}^2)} \approx\big( \|w\|_{A_{p}(x_1)} \|w\|_{A_{p}(x_2)}\big)^{p'/p}\|f\|_{L^p_{w}({\Bbb{R}}^2)}$$
are fulfilled. Thus we have sharpness in  \eqref{m16} for $1<p<2$. Using the fact that $n$-dimensional Hilbert transform is essentially self-adjoint and applying duality argument together with the obvious equality
$$\|u^{1-p'}\|_{A_{p'}}=\|u\|^{1/(p-1)}_{A_p}, \;\;\; u\in A_p, $$
we have sharpness for $p>2$. This completes the proof.
\end{proof}

Let $x=(x^{(1)},\cdots,x^{(n)})\in\Bbb{R}^{d_{1}}\times\cdots\times\Bbb{R}^{d_{n}}$, where $d_{1},d_{2},\cdots, d_{n}\in\Bbb{N}$. Suppose that $x^{(k)}_{j_k}$ are components of $x^{(k)}$, $k=1, \cdots, n$, $1\leq j_k \leq d_k$. Then we define $n$-fold Riesz transform
\begin{eqnarray*}
\big( R_{(j_{1},\cdots,j_{n})}^{(n)}f\big) (x)=p.v.\int\limits_{\Bbb{R}^{d_{1}}}\!\!\!\cdots\!\!\!\int\limits_{\Bbb{R}^{d_{n}}}
\prod_{k=1}^{n}\frac{(x_{j_{k}}^{(k)}-y_{j_{k}}^{(k)})}{|x^{(k)}-y^{(k)}|^{d_{k}+1}}f(y^{(1)},\cdots,y^{(n)})dy^{(1)}\cdots dy^{(n)},
\end{eqnarray*}
where $1\le j_{k}\le d_k$, $k=1,\cdots ,n$. It can be noticed that
\begin{equation*}
\big( R_{(j_{1},\cdots,j_{n})}^{(n)}f\big) (x)=\big( R^{1}_{j_{1}}\circ\dots\circ R^{n}_{j_{n}}f\big) (x)
\end{equation*}
where
\begin{eqnarray*}\big( R_{(j_{1},\cdots,j_{n})}^{k}f\big) (x)=p.v\int\limits_{\Bbb{R}^{d_{k}}}
\!\!\!\frac{x_{j_{k}}^{(k)}-y_{j_{k}}^{(k)}}{|x^{(k)}-y^{(k)}|^{d_{k}+1}}f(x^{(1)},\cdots, x^{(k-1)}, y^{(k)}, x^{(k+1)}\cdots, x^{(n)})dy^{(k)}.
\end{eqnarray*}

\begin{thm}\label{m17}
Let $1<p<\infty$ and $w$ be a weight function on $\Bbb{R}^{d_{1}}\times\cdots\times\Bbb{R}^{d_{n}}$ satisfy the condition $w\in{A^{(s)}_{p}({\Bbb{R}}^{d_1}\times \cdots \times {\Bbb{R}}^{d_n})}$. Then there exist a constant $c$ depending only on $n$ and $p$ such that the following inequalities
\begin{equation}\label{m18}
\|R^{(n)}_{(j_{1},\cdots,j_{n})}f\|_{L^{p}_{w}({\Bbb{R}}^{d})}\le c(\|w\|_{A_p(x^{(1)})}\cdots\|w\|_{A_p(x^{(n)})})^{\max\{1,p'/p\}}\|f\|_{L^{p}_{w}({\Bbb{R}}^{d})}
\end{equation}
hold for all $f\in L^{p}_w({\Bbb{R}}^{d})$ and $1\leq j_{k}\leq d_{k}$, $k=1\cdots n$, where $d= d_1+ \cdots, +d_n$. Further, the power ${\max\{1, p'/p\}}$  in estimate \eqref{m18} is sharp.
\end{thm}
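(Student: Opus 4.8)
The plan is to repeat, almost verbatim, the argument used for Theorem~\ref{m15}, with the one--dimensional Hilbert transform replaced by a Riesz transform on $\Bbb{R}^{d_k}$ and the scalar variable $x_k$ replaced by the block variable $x^{(k)}\in\Bbb{R}^{d_k}$. The two ingredients are: first, the factorization $R^{(n)}_{(j_1,\dots,j_n)}=R^1_{j_1}\circ\cdots\circ R^n_{j_n}$ recorded just above the statement; second, the sharp one--Riesz--transform bound $\|R_j\|_{L^p_w(\Bbb{R}^{d})}\le c\,\|w\|_{A_p(\Bbb{R}^{d})}^{\max\{1,p'/p\}}$, which is Petermichl's theorem (\cite{PET1},\cite{PET2}) for $p=2$ and follows for every $1<p<\infty$ by the sharp Rubio de Francia extrapolation theorem \cite{DRAG-GRAF-PR} (recall $p'/p=1/(p-1)$).

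For the upper bound \eqref{m18} I would first record the block analogue of Proposition~\ref{m12}: if $T$ acts on $\Bbb{R}^{d_1}\times\cdots\times\Bbb{R}^{d_n}$ and $\|T^k\|_{L^p_w(\Bbb{R}^{d_k})}\le c\,\|w\|_{A_p(x^{(k)})}^{\gamma(p)}$ for every $k$, then $\|T\|_{L^p_w(\Bbb{R}^d)}\le c\bigl(\prod_{k=1}^n\|w\|_{A_p(x^{(k)})}\bigr)^{\gamma(p)}$. Its proof is the same Fubini--and--iterate computation as for Proposition~\ref{m12}: for almost every fixing of the remaining blocks the restriction of $w$ lies in $A_p(\Bbb{R}^{d_k})$ with characteristic at most $\|w\|_{A_p(x^{(k)})}$, so one applies the single--block hypothesis and integrates the surviving variables back in, one block at a time. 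One also uses that $w\in A^{(s)}_p(\Bbb{R}^{d_1}\times\cdots\times\Bbb{R}^{d_n})$ implies $w\in\bigcap_{k=1}^n A_p(x^{(k)})$, the block version of the Remark preceding Proposition~\ref{m12} (see \cite{GARC}). Feeding $T=R^{(n)}_{(j_1,\dots,j_n)}$, $T^k=R^k_{j_k}$ and $\gamma(p)=\max\{1,p'/p\}$ into this proposition yields \eqref{m18}.

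For sharpness it suffices, exactly as in the proof of Theorem~\ref{m15}, to take $n=2$ and a product weight $w(x^{(1)},x^{(2)})=w_1(x^{(1)})\,w_2(x^{(2)})$, so that $\|w\|_{A_p(x^{(i)})}=\|w_i\|_{A_p(\Bbb{R}^{d_i})}$. For $p=2$ I would use the power weights $w_i(x^{(i)})=|x^{(i)}|^{(1-\epsilon)d_i}$, for which $\|w_i\|_{A_2(\Bbb{R}^{d_i})}\approx 1/\epsilon$, and the nonnegative tensor--product test function $f(x)=\prod_{i=1}^2 g_i(x^{(i)})$ with $g_i(x^{(i)})=|x^{(i)}|^{(\epsilon-1)d_i}\chi_{\{|x^{(i)}|<1\}}$, which lies in $L^2_w$ with $\|f\|_{L^2_w}^2\approx 1/\epsilon^2$. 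Then $R^{(2)}_{(j_1,j_2)}f=(R_{j_1}g_1)\otimes(R_{j_2}g_2)$, and on the cone where $0<x^{(i)}_{j_i}$ is comparable to $|x^{(i)}|<1/2$ the contribution to $R_{j_i}g_i(x^{(i)})$ of the ball $\{|y|<|x^{(i)}|/2\}$ is positive and $\gtrsim\epsilon^{-1}g_i(x^{(i)})$, while the remaining part contributes (using the oddness of the Riesz kernel) only a bounded multiple of $g_i(x^{(i)})$; hence $R^{(2)}_{(j_1,j_2)}f\gtrsim\epsilon^{-2}f$ on a set of positive measure, and integration gives $\|R^{(2)}_{(j_1,j_2)}\|_{L^2_w}\gtrsim\epsilon^{-2}\approx\|w\|_{A_2(x^{(1)})}\|w\|_{A_2(x^{(2)})}$. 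Letting $\epsilon\to0$ shows the exponent $\max\{1,p'/p\}=1$ is best possible for $p=2$. The range $1<p<2$ is treated in the same way with the power weights adapted as in the proof of Theorem~\ref{m15} (the relevant exponent now being $\max\{1,p'/p\}=1/(p-1)$), and $p>2$ follows by duality, using that $R^{(n)}_{(j_1,\dots,j_n)}$ is, up to sign, self--adjoint together with $\|u^{1-p'}\|_{A_{p'}}=\|u\|_{A_p}^{1/(p-1)}$, exactly as in the last lines of the proof of Theorem~\ref{m15}.

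The routine parts here are the Fubini argument and the power--weight bookkeeping. The one step that requires genuine care is the pointwise lower bound for each factor $R_{j_i}g_i$: one must localise to a cone in which the kernel $(x^{(i)}_{j_i}-y^{(i)}_{j_i})/|x^{(i)}-y^{(i)}|^{d_i+1}$ keeps a fixed sign on the relevant ball, so that the principal value does not cancel the main term. This is the multi--parameter analogue of the pointwise comparisons $M^{-}_\alpha f\le\mathcal{R}_\alpha f$, $M^{+}_\alpha f\le\mathcal{W}_\alpha f$ and of the Hilbert transform lower bound used earlier, and it is the only place where the higher--dimensional geometry of the blocks $\Bbb{R}^{d_k}$ really enters. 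A secondary, mild subtlety is that the upper bound \eqref{m18} for $p\neq2$ is inherited from the case $p=2$ through the sharp extrapolation theorem quoted in the Introduction rather than proved directly.
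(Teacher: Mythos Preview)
Your proposal is correct and follows exactly the route indicated by the paper, which proves the theorem in one line: ``similar to that of the previous one; we need to apply Proposition~\ref{m12} and the results of \cite{PET2}.'' Your upper bound is precisely this---the block version of Proposition~\ref{m12} fed with Petermichl's sharp Riesz-transform estimate---and your sharpness argument via product power weights and tensor test functions is the direct analogue of the proof of Theorem~\ref{m15}. The one place where you genuinely add content beyond the paper's sketch is the cone argument for the pointwise lower bound $R_{j_i}g_i\gtrsim\epsilon^{-1}g_i$: for $d_k>1$ the Riesz kernel $(x_{j}-y_{j})/|x-y|^{d+1}$ does not have a fixed sign on a half-line as the Hilbert kernel does, so one must restrict to a cone $x_{j}\gtrsim|x|$ to make the near-origin contribution positive and then check (as you indicate, using the cancellation of the odd kernel over balls centred at $x$) that the remaining annular pieces contribute only $O(g_i)$. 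This step is correct and is exactly what is needed to adapt the Hilbert-transform sharpness example to the block Riesz setting; the paper leaves it implicit.
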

{\em Proof} of this statement is similar to that of the previous one; we need to apply Proposition \ref{m12} and the results of \cite{PET2}. $\;\;\; \Box$.

\begin{exa}
Let $-1<\gamma<p-1$. It is known that $w(x)=|x|^{\gamma}$ belongs to $A_p^{(s)}(\Bbb{R}^n)$.  Let  $\overline{w}(t)=|t|^\gamma$,\;\; $t\in\Bbb{R}$. We set $b_\gamma:=\max\{2^{\frac{\gamma}{2}},1\}$ and $d_\gamma:=\max\{2^{\frac{-\gamma}{2}-p+1},1\}$.\\
(i) Then  it follows from  Theorem \ref{m14} that
\begin{equation}\label{z09}
\|M^{(s)}\|_{L^p_w(\Bbb{R}^n)}\le C^n {C_{\gamma}}^{\frac{n}{p-1}} \bigg(1+\|\overline{w}\|_{A_p(\Bbb{R})}\bigg)^{\frac{n}{p-1}}
\end{equation}
where $C$ is the constant from the Buckley's estimate (see \eqref{f1}) and
\begin{equation}\label{z091}
C_\gamma = \left\{
\begin{array}{l}
b_\gamma, \;\; 0\leq \gamma < p-1, \\
d_\gamma, \;\; -1<\gamma<0;
\end{array}
\right.
\end{equation}

It is known (see \cite{GRA} pp 287--289)  that $C$ in \eqref{z09} can be taken as $C=3^{p+p'}2^{p'-p}p'24^{\frac{2}{p}}p^{\frac{1}{p-1}}$. \\
(ii) From Theorem \ref{m16} it follows that
$$\|\mathcal{H}^{(n)}\|_{L^p_w(\Bbb{R}^n)}\le c^n C_{\gamma}^{{n}\max\{ 1, p'/p \}} \bigg(1+\|\overline{w}\|_{A_p(\Bbb{R})}\bigg)^{n \max\{1,\frac{p'}{p}\}}$$
holds, where $c$ is the constant from \eqref{f2} and $C_\gamma$ is defined in \eqref{z091}.\\
Following \cite{GRA}, pp 285--286, it can be verified that $$\|\overline{w}\|_{A_p(\Bbb{R})}\le \max\bigg\{2^{|\gamma|},\frac{4^p}{(\gamma+1)(\gamma(1-p')+1)^{p-1}}\bigg\}.$$
We can get also another type of estimate of the norms in $L^p_w$. By using the same arguments as in \cite{GRA}, pp 285--286, we find that
$$\|w\|_{A_p(x_i)}\le\left\{
\begin{array}{l}
\Gamma_\gamma, \;\; 0\leq \gamma < p-1, \\
G_\gamma, \;\; -1<\gamma<0;
\end{array}
\right.\indent i=1,2,$$
where $$\Gamma_\gamma=\max\bigg\{((4/3)^2+1)^{\gamma/2}(2/3)^{\gamma},  b_\gamma4^p\bigg((\gamma+1)^{-1}(\gamma(1-p')+1)^{1-p}+1\bigg)\bigg\},$$
 $$G_\gamma=\max\bigg\{((4/3)^2+1)^{-\gamma/2}(2/3)^{\gamma},
d_0 d_\gamma4^p\bigg((\gamma+1)^{-1}(\gamma(1-p')+1)^{1-p}+1\bigg)\bigg\}.$$
Consequently, using directly Theorems \ref{m14} and \ref{m16} we have the following estimate
$$\|M^{(s)}\|_{L^{p}_w(\Bbb{R}^n)}\leq
C^n\left\{
\begin{array}{l}
\Gamma_\gamma^{n/(p-1)}, \;\; 0\leq \gamma < p-1, \\
G_\gamma^{n/(p-1)}, \;\; -1<\gamma<0;
\end{array}
\right.$$

$$\|\mathcal{H}^{(n)}\|_{L^{p}_w(\Bbb{R}^n)}\leq
c^n\left\{
\begin{array}{l}
\Gamma_\gamma^{n\max\{ 1, p'/p \}}, \;\; 0\leq \gamma < p-1, \\
G_\gamma^{n\max\{ 1, p'/p \}}, \;\; -1<\gamma<0;
\end{array}
\right.$$
where $C$ and $c$ are constants in \eqref{f1} and \eqref{f2} respectively.

\end{exa}

\subsection{Strong Fractional Maximal Functions and Reisz Potentials with Product Kernels}

In this subsection we state and prove sharp weighted norm estimates for strong fractional maximal and Reisz potential with product kernels. To get the main results we use the ideas of the previous subsection.

\begin{defin}\label{m48}
A weight function $w$ satisfies $A_{p,q}^{(s)}$ condition $(w\in A_{p,q}^{(s)})$, $1<p<\infty$ if
$$\|w\|_{A_{p,q}^{(s)}}:=\sup\limits_{P\ni x}\bigg(\frac{1}{|P|}\int\limits_{P}w^{q}(x)dx\bigg)^{1/q}\bigg(\frac{1}{|P|}\int\limits_{P}w(x)^{-p'}dx\bigg)^{1/p'}<\infty,$$
where the supremum is taken over all parallelepipeds $P$ in $\Bbb{R}^{n}$ with sides parallel to the co-ordinate axes.
\end{defin}
\begin{defin}
Let $1<p\leq q<\infty$. A weight function $w=w(x_1,\cdots,x_n)$ defined on ${\Bbb{R}}^n$ is said to satisfy $A_{p,q}$ condition in $x_{i}$ uniformly with respect to other variables $(w\in A_{p,q}(x_i))$ if
\begin{eqnarray*}
\|w\|_{A_{p,q}(x_i)}:= \esssup\limits_{\substack{(x_{1},\cdots, x_{i-1},\\x_{i+1}\cdots,x_{n}) \in{\Bbb{R}}^{n-1}}}\sup_{I\ni x_{i}} &&\bigg(\frac{1}{|I|}\int\limits_{I}w^{q}(x_{1},\cdots,x_{n})dx_{i}\bigg)^{1/q}\times\\
&&\times\bigg(\frac{1}{|I|}\int\limits_{I}w^{-p'}(x_{1},\cdots,x_{n})dx_{i}\bigg)^{1/p'}<\infty,
\end{eqnarray*}
where $I$ is a bounded interval.
\end{defin}
\begin{rem} Like $A^{(s)}_p(\Bbb{R}^n)$ weights for given $w(x_1,\cdots,x_n)\in A_{p,q}^{(s)}\Leftrightarrow w\in\bigcap\limits_{i=1}^{n} A_{p,q}(x_i)$.
\end{rem}

\begin{prop}\label{m49}
Let $1<p\le q<\infty$ and let $T$ be an operator defined on $\Bbb{R}^{n}$. Suppose that $w$ is weight, $w\in A_{p,q}^{(s)}$. Let
\begin{equation}\label{m50}
\|T^k\|_{L^{p}_{w^p}(\Bbb{R})\rightarrow L^{q}_{w^q}(\Bbb{R})}\leq c\|w\|_{A_{p,q}(x_k)}^{\gamma(p,q)}\indent\indent k=1,\cdots,n,
\end{equation}
hold, where $\gamma(p,q)$ is a constant depending only on $p$ and $q$. Then
$$\|T\|_{L^{p}_{w^p}(\Bbb{R}^{n})\rightarrow L^{q}_{w^q}(\Bbb{R}^{n})}\leq c(\|w\|_{A_p(x_{1})}\cdots \|w\|_{A_p(x_{n})} )^{\gamma(p,q)}.$$
\end{prop}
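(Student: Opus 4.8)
The plan is to mimic exactly the proof of Proposition \ref{m12}, replacing the single exponent $p$ by the pair $(p,q)$ and using the iterated-operator structure together with Fubini's theorem. I would treat $n=2$ in detail, since the general case follows by an obvious induction on the number of variables; write $T = T^1\circ T^2$ in the sense that for a nonnegative $f$ on $\Bbb{R}^2$ one has $(Tf)(x_1,x_2)=\big(T^1(T^2f)\big)(x_1,x_2)$, where $T^2$ acts on the second variable and $T^1$ on the first.

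The first step is to write out $\|Tf\|_{L^q_{w^q}(\Bbb{R}^2)}^q$ as an iterated integral and, fixing $x_2$, apply hypothesis \eqref{m50} for $k=1$ in the first variable. This gives
$$\int\limits_{\Bbb{R}}\big(T^1(T^2f(\cdot,x_2))\big)(x_1)^q\,w^q(x_1,x_2)\,dx_1
\le c\,\|w\|_{A_{p,q}(x_1)}^{q\gamma(p,q)}\Bigg(\int\limits_{\Bbb{R}}\big(T^2f(x_1,x_2)\big)^p\,w^p(x_1,x_2)\,dx_1\Bigg)^{q/p},$$
where I have used that $w(\cdot,x_2)\in A_{p,q}(\Bbb{R})$ with characteristic at most $\|w\|_{A_{p,q}(x_1)}$ for a.e.\ $x_2$. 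Integrating in $x_2$ and then raising to the power $1/q$, I obtain
$$\|Tf\|_{L^q_{w^q}(\Bbb{R}^2)}\le c\,\|w\|_{A_{p,q}(x_1)}^{\gamma(p,q)}\Bigg(\int\limits_{\Bbb{R}}\Bigg(\int\limits_{\Bbb{R}}\big(T^2f(x_1,x_2)\big)^p\,w^p(x_1,x_2)\,dx_1\Bigg)^{q/p}dx_2\Bigg)^{1/q}.$$

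The second step is to handle the remaining mixed-norm expression. Here the only real subtlety compared with Proposition \ref{m12} is that the inner and outer exponents differ ($p\le q$), so one cannot simply swap the order of integration by Fubini as in the $p=q$ case; instead I would invoke Minkowski's integral inequality for the exponent $q/p\ge 1$ to move the $x_1$-integration outside the $x_2$-integration, namely
$$\Bigg(\int\limits_{\Bbb{R}}\Bigg(\int\limits_{\Bbb{R}}\big(T^2f(x_1,x_2)\big)^p\,w^p(x_1,x_2)\,dx_1\Bigg)^{q/p}dx_2\Bigg)^{p/q}
\le\int\limits_{\Bbb{R}}\Bigg(\int\limits_{\Bbb{R}}\big(T^2f(x_1,x_2)\big)^q\,w^q(x_1,x_2)\,dx_2\Bigg)^{p/q}dx_1.$$
For fixed $x_1$, the inner $x_2$-integral is exactly $\|T^2f(x_1,\cdot)\|_{L^q_{w^q}(\Bbb{R})}^q$, to which I apply \eqref{m50} for $k=2$, bounding it by $c\,\|w\|_{A_{p,q}(x_2)}^{q\gamma(p,q)}\|f(x_1,\cdot)\|_{L^p_{w^p}(\Bbb{R})}^q$. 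Substituting back, the right-hand side becomes $c\,\|w\|_{A_{p,q}(x_2)}^{p\gamma(p,q)}\int_{\Bbb{R}}\int_{\Bbb{R}}|f(x_1,x_2)|^p w^p(x_1,x_2)\,dx_2\,dx_1 = c\,\|w\|_{A_{p,q}(x_2)}^{p\gamma(p,q)}\|f\|_{L^p_{w^p}(\Bbb{R}^2)}^p$, and collecting the two characteristic factors yields $\|Tf\|_{L^q_{w^q}(\Bbb{R}^2)}\le c\big(\|w\|_{A_{p,q}(x_1)}\|w\|_{A_{p,q}(x_2)}\big)^{\gamma(p,q)}\|f\|_{L^p_{w^p}(\Bbb{R}^2)}$.

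The main technical point to be careful about — and the one genuine difference from the argument of Proposition \ref{m12} — is precisely this use of Minkowski's integral inequality in place of Fubini, which is why the hypothesis $p\le q$ is needed and appears in the statement. Everything else is bookkeeping: for general $n$ one iterates, peeling off one variable at a time, each time applying \eqref{m50} in that variable (legitimate because $w\in A_{p,q}^{(s)}$ implies $w\in A_{p,q}(x_k)$ for every $k$, as noted in the Remark preceding the Proposition) and Minkowski's inequality for the exponent $q/p$ to rearrange the mixed norms; after $n$ steps one accumulates the product $\prod_{k=1}^n\|w\|_{A_{p,q}(x_k)}^{\gamma(p,q)}$. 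I would also remark that, as in the proof of Proposition \ref{m12}, it suffices to argue for $f\ge 0$ and then pass to general $f$ by $|Tf|\le T|f|$ when $T$ is built from positive or sublinear pieces, or simply interpret $T^k$ appropriately; this causes no difficulty.
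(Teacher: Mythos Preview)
Your proposal is correct and is essentially the argument the paper has in mind: the paper actually omits the proof entirely, saying only that it ``is similar to that of Proposition \ref{m12}; therefore it is omitted.'' You have correctly identified the one genuine extra ingredient needed beyond Proposition \ref{m12}, namely Minkowski's integral inequality (with exponent $q/p\ge 1$) in place of Fubini to handle the mixed norms when $p<q$; this is exactly why the hypothesis $p\le q$ appears. (Incidentally, your use of $\|w\|_{A_{p,q}(x_i)}$ in the conclusion is what is clearly intended; the $A_p$ in the displayed conclusion of the Proposition is a typo in the paper.)
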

Proof  is similar to that of Proposition \ref{m12}; therefore it is omitted.
\vskip+0.2cm
The following Theorem is from   \cite{LAC-MOE-PER-TORR}.
\begin{thma}\label{m72}
Suppose that $0<\alpha<n$, $1<p<n/\alpha$ and $q$ is defined by the relationship $1/q=1/p-\alpha/n$. If $w\in A_{p,q}({\Bbb{R}}^n)$, then
$$\|wM_{\alpha}f\|_{L^q({\Bbb{R}}^n)}\le c\|w\|_{A_{p,q}({\Bbb{R}}^n)}^{\frac{p'}{q}(1-\alpha/n)}\|wf\|_{L^p({\Bbb{R}}^n)}.$$
Furthermore, the exponent ${\frac{p'}{q}(1-\alpha/n)}$ is sharp.
\end{thma}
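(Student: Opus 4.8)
\emph{Proof proposal.} For $n=1$ the estimate is already contained in Theorem~\ref{m10}, since $M_{\alpha}f\ll M_{\alpha}^{+}f+M_{\alpha}^{-}f$ (split an interval $I\ni x$ at $x$) and $\|w\|_{A_{p,q}^{\pm}(\mathbb R)}\ll\|w\|_{A_{p,q}(\mathbb R)}$; sharpness for $n=1$ is then inherited from the test functions of Theorem~\ref{m10}. For general $n$ I would follow \cite{LAC-MOE-PER-TORR}, running the circle of ideas used here for $M^{\pm}$: a dyadic reduction, a quantitative self--improvement of the weight class, the weak--type endpoints, and interpolation, with the self--improvement (equivalently, a quantitative $A_{\infty}$ estimate for the dual weight) producing exactly the gap between the weak and the sharp strong exponents.

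\emph{Reductions.} By the ``one third'' trick there are $3^{n}$ dyadic lattices $\mathcal D_{1},\dots,\mathcal D_{3^{n}}$ of $\mathbb R^{n}$ such that every cube sits inside a cube of some $\mathcal D_{i}$ of comparable measure, so $M_{\alpha}f\ll\sum_{i}M_{\alpha}^{\mathcal D_{i}}f$ for $f\ge0$ while $\|w\|_{A_{p,q}(\mathbb R^{n})}$ dominates the $A_{p,q}$ characteristic relative to each $\mathcal D_{i}$; hence it suffices to bound a single dyadic operator $M_{\alpha}^{\mathcal D}$. Put $u=w^{q}$, $\sigma=w^{-p'}$; replacing $f$ by $f\sigma$, the claim is $\|M_{\alpha}^{\mathcal D}(f\sigma)\|_{L^{q}(u)}\ll\|w\|_{A_{p,q}}^{\frac{p'}{q}(1-\alpha/n)}\|f\|_{L^{p}(\sigma)}$ for $f\ge0$.

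\emph{The crude bound and the main obstacle.} Decompose $\set{M_{\alpha}^{\mathcal D}(f\sigma)>a^{k}}=\bigsqcup_{j}Q_{j}^{k}$ into maximal dyadic cubes ($a>2^{n}$, a sparse family for $a$ large), write $\langle f\rangle_{Q}^{\sigma}=\sigma(Q)^{-1}\int_{Q}f\sigma$, and use the layer--cake formula: $\|M_{\alpha}^{\mathcal D}(f\sigma)\|_{L^{q}(u)}^{q}\ll\sum_{k,j}\big(|Q_{j}^{k}|^{\alpha/n-1}\int_{Q_{j}^{k}}f\sigma\big)^{q}u(Q_{j}^{k})$. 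Since $1-\alpha/n=\tfrac1{p'}+\tfrac1q$ and $\tfrac1q+\tfrac{\alpha}{n}=\tfrac1p$, one checks $|Q|^{\alpha/n-1}\sigma(Q)u(Q)^{1/q}\le\|w\|_{A_{p,q}}^{1/q}\sigma(Q)^{1/p}$ (with the normalization $\|w\|_{A_{p,q}}=\sup_{Q}\langle w^{q}\rangle_{Q}\langle w^{-p'}\rangle_{Q}^{q/p'}$), whence, using $q\ge p$,
$$\|M_{\alpha}^{\mathcal D}(f\sigma)\|_{L^{q}(u)}^{q}\ \ll\ \|w\|_{A_{p,q}}\sum_{k,j}\sigma(Q_{j}^{k})^{q/p}\big(\langle f\rangle_{Q_{j}^{k}}^{\sigma}\big)^{q}\ \le\ \|w\|_{A_{p,q}}\Big(\sum_{k,j}\sigma(Q_{j}^{k})\big(\langle f\rangle_{Q_{j}^{k}}^{\sigma}\big)^{p}\Big)^{q/p}.$$
Were the last $\sigma$--Carleson sum $\ll\|f\|_{L^{p}(\sigma)}^{p}$ with a dimensional constant, we would get the exponent $\tfrac1q$ --- too small. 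The point of the theorem is that the level--set cubes are sparse only for Lebesgue measure, so the correct bound for that sum carries a quantitative $A_{\infty}$--type characteristic of $\sigma=w^{-p'}$, which satisfies the elementary bound $[\sigma]_{A_{\infty}}\ll[\sigma]_{A_{1+p'/q}}=\|w\|_{A_{p,q}}^{p'/q}$; it is precisely this factor, with its exponent $p'/q$, that upgrades $\tfrac1q$ to $\tfrac1q+\tfrac{p'}{q^{2}}=\tfrac{p'}{q}(1-\alpha/n)$. Securing this $A_{\infty}$ input with the right power --- or, if one prefers to organize it by interpolation (a quantitative version of Lemma~\ref{z1} plus Marcinkiewicz, as for $M^{\pm}$: arrange the self--improvement parameter so that $\varepsilon\approx\|w\|_{A_{p,q}}^{-p'/q}$, whence the Marcinkiewicz constant $\approx\varepsilon^{-1/q}\approx\|w\|_{A_{p,q}}^{p'/q^{2}}$) --- is the delicate technical heart; I expect this, rather than the reductions or the crude estimate above, to be where the work lies; the remainder is bookkeeping with $\tfrac1q=\tfrac1p-\tfrac{\alpha}{n}$.

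\emph{Sharpness.} Testing with $w(x)=|x|^{(1-\eps)n/p'}$ (for which $\|w\|_{A_{p,q}(\mathbb R^{n})}\approx\eps^{-q/p'}$ as $\eps\to0^{+}$) and $f(x)=|x|^{\eps(1/p+n/p')-n}\chi_{B(0,1)}(x)$, so that $\|wf\|_{L^{p}}\approx\eps^{-1/p}$: the bound $M_{\alpha}f(x)\gg\eps^{-1}|x|^{\eps(1/p+n/p')-n+\alpha}$ on $B(0,1)$ gives $\|wM_{\alpha}f\|_{L^{q}}\gg\eps^{-1-1/q}$, hence $\|wM_{\alpha}f\|_{L^{q}}/\|wf\|_{L^{p}}\gg\eps^{-(1-\alpha/n)}\approx\|w\|_{A_{p,q}}^{\frac{p'}{q}(1-\alpha/n)}$, so no smaller power of $\|w\|_{A_{p,q}}$ is admissible --- exactly as in the one--dimensional computation in the proof of Theorem~\ref{m10}.
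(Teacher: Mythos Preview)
This theorem is not proved in the paper: it is quoted from \cite{LAC-MOE-PER-TORR} (see the sentence immediately preceding the statement), so there is no in-paper argument against which to compare your proposal.

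On its own merits: your $n=1$ reduction and your sharpness computation are both correct, the latter being precisely the $n$-dimensional version of the power-weight examples the paper uses in Theorems~\ref{m10} and~\ref{m51a}. For the upper bound, your outline --- dyadic reduction, the identity $|Q|^{\alpha/n-1}\sigma(Q)\,u(Q)^{1/q}\le\|w\|_{A_{p,q}}^{1/q}\sigma(Q)^{1/p}$, then a $\sigma$-Carleson sum to be controlled via a quantitative $A_{\infty}$ bound for $\sigma$ --- is a viable modern route, and you rightly flag that $A_{\infty}$/self-improvement step as the substantive point; but since that step is only asserted, what you have written is a plan rather than a proof. It is also worth noting that the paper's own treatment of the one-sided analogue (Theorem~\ref{m10}) does \emph{not} proceed via Carleson embeddings or self-improvement of the weight class: it uses the two-weight testing characterization of Proposition~\ref{GaKo1} together with the direct geometric-series estimate of Lemma~\ref{m45}, which produces the exponent $\tfrac{p'}{q}(1-\alpha)$ in one stroke. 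A two-sided version of that argument (Sawyer's testing theorem for $M_{\alpha}$ plus a direct verification of the testing constant, as in \cite{LAC-MOE-PER-TORR}) gives a shorter and more self-contained proof of the present statement than the sparse/$A_{\infty}$ scheme you sketch, though your approach has the merit of paralleling the method used for $M^{\pm}$ in Section~2.
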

Let $f$ be a locally integrable function and let $0<\alpha<1$. The strong fractional maximal operator is defined by
$$\big( M_{\alpha}^{(s)}f\big) (x)=\sup\limits_{ P\ni x }\frac{1}{|P|^{1-\alpha}}\int\limits_{P}|f(y)|dy,$$
where the supremum is taken over all parallelepipeds $P$ in $\Bbb{R}^{n}$ with sides parallel to the co-ordinate axes. It is easy to see that
\begin{equation}\label{m71}
\big( M_{\alpha}^{(s)}f\big) (x)\leq \big( M_{\alpha}^{1}\circ M_{\alpha}^{2}\circ\dots\circ M_{\alpha}^{n}f\big) (x),
\end{equation}
where
$$\big( M_{\alpha}^{k}f\big) (x_{1},\cdots x_{n})=\sup\limits_{I_{k}\ni x_{k}}\frac{1}{|I_k|^{1-\alpha}}\int\limits_{I_k}|f(x_{1},\cdots,x_{k-1},t,x_{k+1},\cdots,x_{n})|dt,$$
where $I_k$ are intervals in $\Bbb{R}$ such that $P=I_{1}\times\cdots\times I_{k}$.
\begin{thm}\label{m51a}
Let $0<\alpha<1$, $1<p<\frac{1}{\alpha}$, $q=\frac{p}{1-\alpha p}$ and $w$ be a weight function on $\Bbb{R}^{n}$ such that $w\in{A^{(s)}_{p,q}({\Bbb{R}}^n)}$. Then there exists a constant $c$ depending only on $n$, $p$ and $\alpha$ such that the following inequality
\begin{equation}\label{m52a}
\|wM_{\alpha}^{(s)}f\|_{L^{q}({\Bbb{R}}^n)}\le c\Bigg(\prod\limits_{i=1}^{n}\|w\|_{A_{p,q}(x_{i})}\Bigg)^{\frac{p'}{q}(1-\alpha)}\|wf\|_{L^{p}({\Bbb{R}}^n)}
\end{equation}
holds, for all $f\in L^{p}_{w^p}({\Bbb{R}}^n)$ . Further, the power ${\frac{p'}{q}(1-\alpha)}$ in estimate \eqref{m52a} is sharp.
\end{thm}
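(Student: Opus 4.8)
The plan is to mimic the proof of Theorem \ref{m14}: dominate $M_\alpha^{(s)}$ pointwise by an iterated one--dimensional fractional maximal operator, insert the sharp one--dimensional weighted bound in each variable, and then multiply the one--variable characteristics by the tensorisation argument of Proposition \ref{m49}; sharpness will follow by tensoring the scalar extremiser.

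First I would record the pointwise estimate \eqref{m71}, namely $M_\alpha^{(s)}f\le M_\alpha^1\circ M_\alpha^2\circ\cdots\circ M_\alpha^n f$ for $f\ge 0$, observing that the normalisations match because $|P|^{1-\alpha}=\prod_{k=1}^{n}|I_k|^{1-\alpha}$ when $P=I_1\times\cdots\times I_n$. Next, for each $k$ and each fixed value of the remaining variables, Theorem \ref{m72} in ambient dimension one gives the sharp one--dimensional bound $\|M_\alpha g\|_{L^q_{u^q}(\Bbb{R})}\le c\,\|u\|_{A_{p,q}(\Bbb{R})}^{\frac{p'}{q}(1-\alpha)}\|g\|_{L^p_{u^p}(\Bbb{R})}$ with $c=c(p,\alpha)$ (here $q=\frac{p}{1-\alpha p}$ is exactly the exponent $1/q=1/p-\alpha$ forced by scaling). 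Taking the essential supremum over the frozen variables yields, for each $k$, the one--variable estimate $\|M_\alpha^k\|_{L^p_{w^p}(\Bbb{R})\to L^q_{w^q}(\Bbb{R})}\le c\,\|w\|_{A_{p,q}(x_k)}^{\frac{p'}{q}(1-\alpha)}$, i.e.\ hypothesis \eqref{m50} of Proposition \ref{m49} with $\gamma(p,q)=\frac{p'}{q}(1-\alpha)$. Since $1<p\le q<\infty$ and $w\in A_{p,q}^{(s)}$ (equivalently $w\in\bigcap_{i=1}^{n}A_{p,q}(x_i)$), Proposition \ref{m49} applied to $M_\alpha^1\circ\cdots\circ M_\alpha^n$ then gives
\[
\|wM_\alpha^{(s)}f\|_{L^q(\Bbb{R}^n)}\le\Big\|w\,(M_\alpha^1\circ\cdots\circ M_\alpha^n)f\Big\|_{L^q(\Bbb{R}^n)}\le c\Big(\prod_{i=1}^{n}\|w\|_{A_{p,q}(x_i)}\Big)^{\frac{p'}{q}(1-\alpha)}\|wf\|_{L^p(\Bbb{R}^n)},
\]
which is \eqref{m52a}.

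For the sharpness I would test on product data, exactly as in the scalar case. If $w(x)=\prod_{i=1}^{n}w_i(x_i)$ then $\|w\|_{A_{p,q}(x_i)}=\|w_i\|_{A_{p,q}(\Bbb{R})}$, since the factor $\prod_{j\neq i}w_j(x_j)$ cancels between the $w^q$-- and $w^{-p'}$--averages (cf.\ \eqref{PR}); so it suffices to tensor the example from the proof of Theorem \ref{m10}(ii). Put $w_i(t)=|t|^{(1-\epsilon)/p'}$, $0<\epsilon<1$, and $f(x)=\prod_{i=1}^{n}x_i^{\epsilon-1}\chi_{(0,1)}(x_i)$. Then $\|w_i\|_{A_{p,q}(\Bbb{R})}\approx\epsilon^{-q/p'}$ and $\|wf\|_{L^p}\approx\epsilon^{-n/p}$, while for $x\in(0,1)^n$, choosing the rectangle $P=\prod_{i=1}^{n}(0,2x_i)$ gives $M_\alpha^{(s)}f(x)\ge c\,\epsilon^{-n}\prod_{i=1}^{n}x_i^{\epsilon-1+\alpha}$, and a product of the one--dimensional integral computations behind \eqref{m67} yields $\|wM_\alpha^{(s)}f\|_{L^q(\Bbb{R}^n)}\ge c\,\epsilon^{-n(1+1/q)}$. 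Combining and using $1/q-1/p=-\alpha$,
\[
\frac{\|wM_\alpha^{(s)}f\|_{L^q(\Bbb{R}^n)}}{\|wf\|_{L^p(\Bbb{R}^n)}}\ge c\,\epsilon^{-n(1-\alpha)}\approx\Big(\prod_{i=1}^{n}\|w\|_{A_{p,q}(x_i)}\Big)^{\frac{p'}{q}(1-\alpha)},
\]
so letting $\epsilon\to0$ shows that $\frac{p'}{q}(1-\alpha)$ cannot be replaced by a smaller exponent.

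The delicate point is the tensorisation step. Unlike the Hardy--Littlewood situation treated in Proposition \ref{m12}, here $p<q$, so one cannot iterate the slice estimates ``coordinate by coordinate'' verbatim: the output exponent of each $M_\alpha^k$ is $q$, while the next factor in the composition expects an $L^p$ input. The passage from the one--variable bounds to the $n$--dimensional bound therefore runs through a mixed--norm estimate together with an application of Minkowski's integral inequality (available precisely because $q\ge p$), and it is this estimate that is packaged into Proposition \ref{m49}. I would also double--check the bookkeeping of the $A_{p,q}$--characteristic normalisation so that the one--variable exponent $\frac{p'}{q}(1-\alpha)$ is exactly the one that survives taking the product over $i=1,\dots,n$.
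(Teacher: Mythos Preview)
Your proof is correct and follows essentially the same route as the paper: the pointwise domination \eqref{m71}, the one--dimensional sharp bound (Theorem \ref{m72} with $n=1$), and the tensorisation of Proposition \ref{m49}, together with the product power--weight extremiser for sharpness. Your remark that the iteration behind Proposition \ref{m49} requires Minkowski's integral inequality (since $p<q$) is a welcome clarification of a step the paper leaves implicit.
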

\begin{proof}
Using estimate \eqref{m71}, Theorem \ref{m72} and Proposition \ref{m49} we get easily \eqref{m52a}.
The main ''difficulty'' here is to derive sharpness. Let, for simplicity, $n=2$. Let us take $0<\epsilon<1$. suppose that $w$ is of product type $w(x_{1},x_{2})=w_1(x_1) w_2(x_2)$, where $w_1(x_1)= |x_{1}|^{(1-\epsilon)/p'}$ and $w_2(x_2)= |x_{2}|^{(1-\epsilon)/p'}$. Then it is easy to see that
$$\|w\|_{A_{p,q}(x_1)}=\|w_1\|_{A_{1+q/p'}({\Bbb{R}})}\approx\epsilon^{-q/p'}; \;\; \|w\|_{A_{p,q}(x_2)}=\|w_2\|_{A_{1+q/p'}({\Bbb{R}})}\approx\epsilon^{-q/p'}. $$
Further, if
$$f(t_{1},t_{2})=|t_{1}|^{\epsilon-1}\chi_{(0,1)}(t_{1})|t_{2}|^{\epsilon-1}\chi_{(0,1)}(t_{2}),$$
then $\|wf\|_{L^p({\Bbb{R}}^2)} \approx \frac{1}{\epsilon^{2/p}}$. Let $0<x_1,x_2<1$. Then we find that
$$M^{(s)}_{\alpha}f(x_{1},x_{2})\ge\frac{1}{|x_{1}|^{1-\alpha}|x_{2}|^{1-\alpha}}\int\limits_{0}^{x_{1}}\int\limits_{0}^{x_{2}}f(t_{1},t_{2})dt_{1}dt_{2}\approx c\frac{|x_{1}|^{\epsilon-1+\alpha}|x_{2}|^{\epsilon-1+\alpha}}{\epsilon^{2}}.$$
Finally we conclude that,
\begin{equation}\label{m55a}
\|wM^{(s)}_{\alpha}f\|_{L^{q}({\Bbb{R}}^2)}\ge \epsilon^{-2-2/q}.
\end{equation}
Thus letting $\epsilon\rightarrow0$ we have sharpness.
\end{proof}

Let $0<\alpha<1$. We define Riesz potential with product kernels on $\Bbb{R}^{n}$ as follows:
$$(I_{\alpha}^{(n)}f)(x)=\int\limits_{\Bbb{R}^{n}}\frac{f(t_1,\cdots,t_n)}{\prod\limits_{i=1}^{n}|x_{i}-t_{i}|^{1-\alpha}}dt_{1}\cdots dt_n, \;\;\; x=(x_{1},\cdots,x_{n}) \in {\Bbb{R}}^n.$$
When $n=1$ we use the symbol $I_{\alpha}$ for $I_{\alpha}^{(1)}$. The following Theorem is from  \cite{LAC-MOE-PER-TORR}.
\begin{thma}\label{m73}
Let $0<\alpha<n$, $1<p<n/\alpha$. We put $q=\frac{np}{n-\alpha p}$. Suppose that $w\in A_{p,q}({\Bbb{R}}^n)$. Then
$$\|wI_{\alpha}f\|_{L^{q}({\Bbb{R}}^n)}\leq c\|w\|_{A_{p,q}({\Bbb{R}}^n)}^{(1-\alpha/n)\max\{1,p'/q\}}\|wf\|_{L^p({\Bbb{R}}^n)}.$$
Furthermore, this estimate is sharp.
\end{thma}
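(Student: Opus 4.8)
I would first observe that for $n=1$ the statement is already a corollary of Theorem~\ref{m32}: in one variable $I_\alpha f=\mathcal{R}_\alpha f+\mathcal{W}_\alpha f$, and $A_{p,q}({\Bbb{R}})\subset A^{-}_{p,q}({\Bbb{R}})\cap A^{+}_{p,q}({\Bbb{R}})$ with $\|w\|_{A^{\pm}_{p,q}({\Bbb{R}})}\ll\|w\|_{A_{p,q}({\Bbb{R}})}$, so adding \eqref{m31b} and \eqref{m31} (together with the corresponding sharpness examples) gives the claim. For general $n$ the plan is to run the dyadic/sparse argument of Lacey--Moen--P\'erez--Torres. By the shifted-dyadic-grids device one has, for $f\ge0$, a pointwise bound $I_\alpha f\ll\sum_{j=1}^{3^{n}}I_\alpha^{\mathcal{D}_j}f$, where $I_\alpha^{\mathcal{D}}f=\sum_{Q\in\mathcal{D}}|Q|^{\alpha/n}\langle f\rangle_Q\chi_Q$ and $\langle f\rangle_Q=|Q|^{-1}\int_Q f$; a Calder\'on--Zygmund stopping-time argument then yields, for each $f$, a sparse (Carleson) subfamily $\mathcal{S}\subset\mathcal{D}$ with $I_\alpha^{\mathcal{D}}f\ll\mathcal{A}_{\mathcal{S}}^{\alpha}f:=\sum_{Q\in\mathcal{S}}|Q|^{\alpha/n}\langle f\rangle_Q\chi_Q$. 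Thus the whole statement reduces to the estimate $\|\mathcal{A}_{\mathcal{S}}^{\alpha}\|_{L^p_{w^p}\to L^q_{w^q}}\ll\|w\|_{A_{p,q}({\Bbb{R}}^n)}^{(1-\alpha/n)\max\{1,p'/q\}}$, uniformly over sparse families $\mathcal{S}$.

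To prove this I would use the two-weight testing characterization for the positive dyadic operator $\mathcal{A}_{\mathcal{S}}^{\alpha}$: writing $\sigma:=w^{-p'}$ and $\mu:=w^{q}$, its $L^p_{w^p}\to L^q_{w^q}$ norm is comparable to $\mathfrak{T}+\mathfrak{T}^{*}$, where
$$\mathfrak{T}=\sup_{R\in\mathcal{S}}\frac{\|\mathcal{A}_{\mathcal{S}}^{\alpha}(\sigma\chi_R)\|_{L^q_\mu}}{\sigma(R)^{1/p}},\qquad
\mathfrak{T}^{*}=\sup_{R\in\mathcal{S}}\frac{\|\mathcal{A}_{\mathcal{S}}^{\alpha}(\mu\chi_R)\|_{L^{p'}_\sigma}}{\mu(R)^{1/q'}}.$$
Using the Carleson property of $\mathcal{S}$ to pass from $\sum_Q(\cdots)\chi_Q$ to $\sum_Q(\cdots)\chi_{E_Q}$ with the $E_Q\subset Q$ pairwise disjoint and $|E_Q|\ge\frac12|Q|$, each of $\mathfrak{T}$ and $\mathfrak{T}^{*}$ collapses, after H\"older's inequality, to a convergent geometric sum whose size is controlled by the local quantities $\langle w^q\rangle_Q\langle w^{-p'}\rangle_Q^{q/p'}\le\|w\|_{A_{p,q}({\Bbb{R}}^n)}$, $Q\in\mathcal{S}$; the bookkeeping gives $\mathfrak{T}\ll\|w\|_{A_{p,q}}^{(1-\alpha/n)p'/q}$ and $\mathfrak{T}^{*}\ll\|w\|_{A_{p,q}}^{1-\alpha/n}$, hence $\mathfrak{T}+\mathfrak{T}^{*}\ll\|w\|_{A_{p,q}}^{(1-\alpha/n)\max\{1,p'/q\}}$ (recall $\|w\|_{A_{p,q}}\ge1$). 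An equivalent route is to note that $\mathfrak{T}$ is dominated by the corresponding testing constant for the fractional maximal function, to which the sharp bound of Theorem~\ref{m72} applies, while $\mathfrak{T}^{*}$ is, after passing to the adjoint (the Riesz potential is self-adjoint), the same constant for the weight $w^{-1}$, for which $\|w^{-1}\|_{A_{q',p'}({\Bbb{R}}^n)}=\|w\|_{A_{p,q}({\Bbb{R}}^n)}^{p'/q}$.

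For sharpness I would proceed exactly as in the sharpness parts of Theorems~\ref{m10} and \ref{m51a}: take radial power weights $w(x)=|x|^{a}$ and test functions $f(x)=|x|^{b}\chi_{B(0,1)}(x)$, with $a$ and $b$ depending on a small parameter $\epsilon$ tuned to the critical exponents, so that $\|w\|_{A_{p,q}({\Bbb{R}}^n)}\approx\epsilon^{-c_0}$ for a suitable $c_0>0$ while $I_\alpha f(x)\ge c\,\epsilon^{-1}|x|^{b+\alpha}$ on $B(0,1)$; evaluating $\|wI_\alpha f\|_{L^q}$ and $\|wf\|_{L^p}$ and letting $\epsilon\to0$ produces the lower bound with exponent $(1-\alpha/n)p'/q$ in the regime $p'\ge q$, while a second, ``dual'' choice of weights (equivalently, duality together with $\|w^{-1}\|_{A_{q',p'}}=\|w\|_{A_{p,q}}^{p'/q}$ and self-adjointness of $I_\alpha$) gives the exponent $(1-\alpha/n)$ when $p'\le q$; combined, these exhibit sharpness of $(1-\alpha/n)\max\{1,p'/q\}$ in all cases. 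The main obstacle is the middle step: pinning down the powers of $\|w\|_{A_{p,q}}$ in the two testing constants \emph{exactly}, with no loss in the geometric sums, since it is precisely this bookkeeping that separates the exponent $p'/q$ from $1$ and produces the sharp $\max\{1,p'/q\}$.
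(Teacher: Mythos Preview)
The paper does not prove this statement at all: Theorem~\ref{m73} is quoted verbatim from \cite{LAC-MOE-PER-TORR} (it is typeset in the \texttt{thma} environment reserved for cited results and introduced by ``The following Theorem is from \cite{LAC-MOE-PER-TORR}''). There is therefore no ``paper's own proof'' to compare your proposal against; the authors simply invoke the result in order to derive the product--kernel Theorem~\ref{m53} via Proposition~\ref{m49}.

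Your sketch is essentially the Lacey--Moen--P\'erez--Torres argument itself: pointwise domination of $I_\alpha$ by shifted dyadic model operators, reduction to sparse averaging operators $\mathcal{A}^{\alpha}_{\mathcal{S}}$, the two testing conditions $\mathfrak{T}$ and $\mathfrak{T}^{*}$, and the power-weight sharpness examples combined with duality. That is exactly the route taken in \cite{LAC-MOE-PER-TORR}, so in substance your proposal matches the source from which the paper imports the result. Your opening remark that the case $n=1$ already follows from Theorem~\ref{m32} (via $I_\alpha=\mathcal{R}_\alpha+\mathcal{W}_\alpha$ and $\|w\|_{A^{\pm}_{p,q}({\Bbb{R}})}\ll\|w\|_{A_{p,q}({\Bbb{R}})}$) is a nice observation and is in fact what the authors allude to in the Introduction when they write that ``From the obtained results for one--sided potentials \dots\ can be easily obtained the sharp estimates for two--sided fractional integrals $I_\alpha$ in the case $n=1$''.
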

Our result regarding $I_{\alpha}^{(n)}$ reads as follows:
\begin{thm}\label{m53}
Let $0<\alpha<1$, $1<p<1/\alpha$. We put $q= \frac{p}{1-\alpha p}$. Let $w$ be a weight function on $\Bbb{R}^{n}$ such that $w\in{A^{(s)}_{p,q}({\Bbb{R}}^n)}$. Then there exists a constant $c$ depending only on $n$, $p$ and $\alpha$ such that the following inequality
\begin{equation}\label{m54}
\|wI^{(n)}_{\alpha}f\|_{L^{q}({\Bbb{R}}^n)}\le c\Bigg(\prod\limits_{i=1}^{n}\|w\|_{A_{p,q}(x_{i})}\Bigg)^{\max\{1,\frac{p'}{q}\}(1-\alpha)}\|wf\|_{L^{p}({\Bbb{R}}^n)}
\end{equation}
holds for all $f\in L^{p}_{w^p}({\Bbb{R}}^n)$ . Further, the power ${\max\{1,\frac{p'}{q}\}(1-\alpha)}$ in estimate \eqref{m54} is sharp.
\end{thm}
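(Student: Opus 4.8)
The plan is to mimic exactly the strategy used for Theorems \ref{m14}, \ref{m15}, \ref{m17} and \ref{m51a}: first reduce $I_{\alpha}^{(n)}$ to a composition of one-dimensional operators acting on individual variables, then invoke the one-variable sharp bound (here Theorem \ref{m32}, part (b)) together with the tensorization lemma (Proposition \ref{m49}), and finally produce an explicit product-type example to show the exponent cannot be lowered. Concretely, I would begin by observing the factorization
$$\big(I_{\alpha}^{(n)}f\big)(x)=\big(\mathcal{W}_{\alpha}^{1}\circ\cdots\circ\mathcal{W}_{\alpha}^{n}\big)f(x),$$
where $\mathcal{W}_{\alpha}^{k}$ denotes the one-sided fractional integral $\mathcal{W}_{\alpha}$ acting in the $k$-th variable (note that on $\Bbb{R}$ the kernel $|x_i-t_i|^{\alpha-1}$ of $I_\alpha$ differs from that of $\mathcal{W}_\alpha$ only by replacing $(t_i-x_i)^{\alpha-1}\chi_{t_i>x_i}$ by $|x_i-t_i|^{\alpha-1}$; one may instead dominate $I_\alpha^{(n)}$ pointwise by a sum of $2^n$ compositions of $\mathcal{W}_\alpha$ and $\mathcal{R}_\alpha$ in the various variables, each of which obeys the same bound, so the factorization argument goes through with an extra harmless constant).

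Having the factorization, the upper estimate is immediate: by Theorem \ref{m32}(b) each one-dimensional factor satisfies
$$\|\mathcal{W}_{\alpha}^{k}\|_{L^{p}_{w^p}(\Bbb{R})\to L^{q}_{w^q}(\Bbb{R})}\le c\|w\|_{A_{p,q}(x_k)}^{(1-\alpha)\max\{1,p'/q\}},$$
so $\gamma(p,q)=(1-\alpha)\max\{1,p'/q\}$, and Proposition \ref{m49} gives \eqref{m54} directly. For sharpness I would take $n=2$ for clarity and split into the cases $p'/q\ge 1$ and $p'/q<1$. In the first case choose the product weight $w(x_1,x_2)=|x_1|^{(1-\epsilon)/p'}|x_2|^{(1-\epsilon)/p'}$, for which $\|w\|_{A_{p,q}(x_i)}\approx\epsilon^{-q/p'}$ (so the right-hand side of \eqref{m54} is of order $\epsilon^{-(1+p'/q)(1-\alpha)\cdot\text{something}}$; here I use $p'(1-\alpha)=1+p'/q$), and the test function $f(t_1,t_2)=|t_1|^{\epsilon-1}\chi_{(0,1)}(t_1)|t_2|^{\epsilon-1}\chi_{(0,1)}(t_2)$ with $\|wf\|_{L^p(\Bbb{R}^2)}\approx\epsilon^{-2/p}$; exactly as in Theorem \ref{m51a} one gets for $0<x_1,x_2<1$ the pointwise lower bound $I_\alpha^{(2)}f(x_1,x_2)\gtrsim \epsilon^{-2}|x_1|^{\epsilon-1+\alpha}|x_2|^{\epsilon-1+\alpha}$, whence $\|wI_\alpha^{(2)}f\|_{L^q(\Bbb{R}^2)}\ge\epsilon^{-2-2/q}$, and letting $\epsilon\to0$ forces the exponent $(1-\alpha)p'/q$ on each factor. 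For the case $p'/q<1$ I would pass to the adjoint: $I_\alpha^{(n)}$ is (up to constants, replacing $\mathcal{W}_\alpha$ by $\mathcal{R}_\alpha$ in each slot) essentially self-adjoint, so $\|I_\alpha^{(n)}\|_{L^p_{w^p}\to L^q_{w^q}}=\|\widetilde I_\alpha^{(n)}\|_{L^{q'}_{w^{-q'}}\to L^{p'}_{w^{-p'}}}$, and since $(q')'/p'=q/p'>1$ we are back in the first case with $w$ replaced by $w^{-1}$ and $(p,q)$ by $(q',p')$; using $\|w^{-1}\|_{A_{q',p'}(x_i)}=\|w\|_{A_{p,q}(x_i)}^{q/p'}$ one recovers exactly the exponent $(1-\alpha)\max\{1,p'/q\}$.

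The only genuine obstacle I anticipate is bookkeeping rather than conceptual: one must be careful that the pointwise domination of $I_\alpha^{(n)}$ by compositions of one-sided operators does not introduce a loss in the power of the characteristic (it does not, because each of the finitely many compositions obeys the same $A_{p,q}(x_i)$ bound, and the product characteristics $\prod_i\|w\|_{A_{p,q}(x_i)}$ are symmetric under the reflections $x_i\mapsto -x_i$ that exchange $\mathcal{W}_\alpha$ with $\mathcal{R}_\alpha$), and that in the sharpness example the mixed quantity $\|wI_\alpha^{(2)}f\|_{L^q}$ is bounded below by the full product of the two one-dimensional lower bounds — this follows because both $f$ and the lower bound for $I_\alpha^{(2)}f$ are of product form, so the $L^q(\Bbb{R}^2)$ norm factors. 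With these two points checked, the proof is a routine transcription of the arguments already given for Theorems \ref{m15}, \ref{m17} and \ref{m51a}.
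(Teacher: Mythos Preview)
Your argument is correct, but it takes a different route to the upper bound than the paper does. The paper simply observes that $I_{\alpha}^{(n)}=I_{\alpha}^{1}\circ\cdots\circ I_{\alpha}^{n}$, where each $I_{\alpha}^{k}$ is the ordinary two--sided Riesz potential acting in the $k$--th variable, and then applies Theorem~\ref{m73} (the Lacey--Moen--P\'erez--Torres bound on $\Bbb{R}$) together with Proposition~\ref{m49}. This avoids any decomposition into one--sided pieces: the one--dimensional characteristic that appears is exactly $\|w\|_{A_{p,q}(x_k)}$, so nothing further is needed.

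Your approach instead splits $I_{\alpha}^{(n)}$ into the $2^{n}$ compositions of $\mathcal{W}_{\alpha}$ and $\mathcal{R}_{\alpha}$ and invokes Theorem~\ref{m32}. This is legitimate and has the virtue of being self--contained (it uses only results proved in the paper rather than the external Theorem~\ref{m73}), but note that Theorem~\ref{m32} yields bounds in terms of $\|w\|_{A_{p,q}^{\pm}(x_k)}$, not $\|w\|_{A_{p,q}(x_k)}$. You therefore need the elementary observation $\|w\|_{A_{p,q}^{\pm}(x_k)}\le 2^{1+q/p'}\|w\|_{A_{p,q}(x_k)}$ (take the interval of double length in the two--sided condition) rather than the ``reflection symmetry'' you mention, which by itself only tells you that the $A_{p,q}^{+}$ and $A_{p,q}^{-}$ characteristics play symmetric roles. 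With that trivial domination inserted, your upper bound goes through. For sharpness your treatment (the product example for $p'/q\ge 1$ and duality for $p'/q<1$) matches what the paper intends by ``the same arguments as in the proof of Theorem~\ref{m51a}''.
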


Proof of this statement follows using the same arguments as in the proof of Theorem \ref{m51a} together with Theorem \ref{m73}.

\subsection{Strong one--sided maximal operators}
Based on the results derived in this paper we give sharp bounds for one--sided strong maximal operators.

Let $f$ be locally integrable function on $\Bbb{R}^{n}$. We define one--sided strong fractional maximal operators as
\begin{equation}\label{m56}
M^{+(s)}f(x_{1},\cdots,x_{n})=\frac{1}{\prod\limits_{i=1}^{n}{h_i}}
\int\limits_{x_{1}}^{x_{1}+h_{1}}\cdots\int\limits_{x_{n}}^{x_{n}+h_{n}}|f(y_1,\cdots y_n)|dy_1\cdots dy_{n},
\end{equation}
\begin{equation}\label{m57}
M^{-(s)}_{\alpha}f(x_{1},\cdots,x_{n})=\frac{1}{\prod\limits_{i=1}^{n}{h_i}}
\int\limits_{x_{1}-h_{1}}^{x_{1}}\cdots\int\limits_{x_{n}-h_n}^{x_{n}}|f(y_1,\cdots y_n)|dy_1\cdots dy_{n}.
\end{equation}

Let $1<p<\infty$. We say that a weight function $w$ belongs to the class $A_{p}^{-(s)}(\Bbb{R}^{n})$ if
\begin{eqnarray*}
\|w\|_{A_{p}^{-(s)}(\Bbb{R}^{n})}&:=&\sup\limits_{\substack{h_1,\cdots,h_n>0\\x_{1},\cdots x_{n}\in\Bbb{R}}}\bigg(\frac{1}{h_{1}\cdots h_{n}}\int\limits_{x_{1}}^{x_{1}+h_{1}}\cdots\int\limits_{x_{n}}^{x_{n}+h_{n}}w(t_{1},\cdots,t_{n})dt_{1}\cdots dt_{n}\bigg)\\
&&\times\bigg(\frac{1}{h_{1}\cdots h_{n}}\int\limits^{x_{1}}_{x_{1}-h_{1}}\cdots\int\limits^{x_{n}}_{x_{n}-h_{n}}w^{1-p'}(t_{1},\cdots,t_{n})dt_{1}\cdots dt_{n}\bigg)^{p-1}<\infty;
\end{eqnarray*}
further, $w\in A_{p}^{-(s)}(\Bbb{R}^{n})$ if
\begin{eqnarray*}
\|w\|_{A_{p}^{+(s)}(\Bbb{R}^{n})}&:=&\sup\limits_{\substack{h_1,\cdots,h_n>0\\x_{1},\cdots x_{n}\in\Bbb{R}}}\bigg(\frac{1}{h_{1}\cdots h_{n}}\int\limits^{x_{1}}_{x_{1}-h_{1}}\cdots\int\limits^{x_{n}}_{x_{n}-h_{n}}w(t_{1},\cdots,t_{n})dt_{1}\cdots dt_{n}\bigg)\\
&&\times\bigg(\frac{1}{h_{1}\cdots h_{n}}\int\limits_{x_{1}}^{x_{1}+h_{1}}\cdots\int\limits_{x_{n}}^{x_{n}+h_{n}}w^{1-p'}(t_{1},\cdots,t_{n})dt_{1}\cdots dt_{n}\bigg)^{p-1}<\infty.
\end{eqnarray*}

\begin{defin}
Let $1<p<\infty$. A weight function $w=w(x_1,\cdots,x_n)$ defined on ${\Bbb{R}}^n$ is said to satisfy $A_p^{-}$ condition in $x_{i}$ uniformly with respect to other variables $(w\in A_{p}^{-}(x_i))$ if
\begin{eqnarray*}
\|w\|_{A^{-}_{p}(x_i)}\equiv \esssup\limits_{\substack {(x_{1},\cdots x_{i-1},\\x_{i+1}\cdots,x_{n})\in{\Bbb{R}}^{n-1}}}\sup_{h_{i}>0} \bigg(\frac{1}{h_{i}}\int\limits_{x_{i}}^{x_{i}+h_i}w(x_1,\cdots,x_{i-1}, t, x_{i-1}, \cdots, x_n)dt\bigg)\\
\times\bigg(\frac{1}{h_{i}}\int\limits_{x_{i}-h_i}^{x_i}w(x_1,\cdots,x_{i-1}, t, x_{i-1}, \cdots, x_n)^{-1/(p-1)}dt \bigg)^{p-1}<\infty.
\end{eqnarray*}
further, $w\in A_{p}^{+}(x_i)$ if
\begin{eqnarray*}
\|w\|_{A^{+}_{p}(x_i)}\equiv \esssup\limits_{\substack {(x_{1},\cdots x_{i-1},\\x_{i+1}\cdots,x_{n})\in{\Bbb{R}^{n-1}}}}\sup_{h_{i}>0} \bigg(\frac{1}{h_{i}}\int\limits_{x_{i}-h_{i}}^{x_{i}}w(x_1,\cdots,x_{i-1}, t, x_{i-1}, \cdots, x_n)dt \bigg)\\
\times\bigg(\frac{1}{h_{i}}\int\limits_{x_{i}}^{x_i+h_i}w(x_1,\cdots,x_{i-1}, t, x_{i-1}, \cdots, x_n)^{-1/(p-1)}dt\bigg)^{p-1}<\infty.
\end{eqnarray*}
\end{defin}

{\em Remark.} It is known that (see \cite{KOK-MES-PER}, Ch. 5) that $w(x_1,\cdots,x_n)\in A_{p}^{\pm(s)}\Leftrightarrow w\in\bigcap\limits_{i=1}^{n} A_{p}^{\pm}(x_i)$.

\begin{thm}\label{m58}
Let $1<p<\infty$.

\rm{(i)} Suppose that a weight function $w$ on ${\Bbb{R}}^n$ belongs to the class $A^{+(s)}_p({\Bbb{R}}^n)$. Then there exists a constant $c$ depending only on $n$ and $p$ such that the following inequality
\begin{equation}\label{m59}
\|M^{+(s)}f\|_{L^{p}_{w}(\Bbb{R}^n)}\le c\Bigg(\prod\limits_{i=1}^{n}\|w\|_{A_p^{+}(x_{i})}\Bigg)^{1/(p-1)}\|f\|_{L^{p}_{w}(\Bbb{R}^n)}
\end{equation}
holds for all $f\in L^{p}_w(\Bbb{R}^n)$ . Further, the power $1/(p-1)$ in estimate \eqref{m59} is sharp.

\rm{(ii)} Let $w \in A^{-(s)}_p({\Bbb{R}}^n)$. Then there exists a constant $c$ depending only on $n$ and $p$ such that the following inequality
\begin{equation}\label{m59'}
\|M^{-(s)}f\|_{L^{p}_{w}(\Bbb{R}^n)}\le c\Bigg(\prod\limits_{i=1}^{n}\|w\|_{A_p^{-}(x_{i})}\Bigg)^{1/(p-1)}\|f\|_{L^{p}_{w}(\Bbb{R}^n)}
\end{equation}
holds for all $f\in L^{p}_w(\Bbb{R}^n)$ . Further, the power $1/(p-1)$ in estimate \eqref{m59'} is sharp.
\end{thm}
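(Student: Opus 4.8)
The plan is to follow the scheme already used for Theorems~\ref{m14} and \ref{m51a}: dominate the $n$-dimensional one-sided strong maximal operator by an iteration of one-dimensional one-sided maximal operators, and then combine the sharp one-variable bound of Theorem~\ref{m8} with the tensorization principle of Proposition~\ref{m12}. Concretely, I would first record the pointwise majorization, valid for $f\ge0$,
$$
\bigl(M^{+(s)}f\bigr)(x)\le\bigl(M^{+1}\circ M^{+2}\circ\cdots\circ M^{+n}f\bigr)(x),
$$
where $M^{+k}$ is the right one-sided Hardy--Littlewood maximal operator acting in the $k$-th variable with the others frozen (and, symmetrically, $M^{-(s)}f\le M^{-1}\circ\cdots\circ M^{-n}f$). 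As in the proof of Theorem~\ref{m14} (cf.\ \eqref{m71}), this is obtained by writing the average of $|f|$ over the box $\prod_i(x_i,x_i+h_i)$ as an iterated one-variable average, bounding the innermost average by $M^{+n}f$, iterating, and only then taking the supremum over $h_1,\dots,h_n$.

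Next, by Theorem~\ref{m8}(i) each factor satisfies $\|M^{+k}\|_{L^p_w(\Bbb{R})}\le c\|w\|_{A_p^{+}(x_k)}^{1/(p-1)}$, i.e.\ the hypothesis \eqref{m13} of Proposition~\ref{m12} holds with $\gamma(p)=1/(p-1)$ and with the one-sided class $A_p^{+}(x_k)$ in place of $A_p(x_k)$. Since the proof of Proposition~\ref{m12} uses nothing beyond Fubini's theorem and \eqref{m13}, it applies verbatim to the one-sided classes; composing the $n$ bounds then yields \eqref{m59}. The inequality \eqref{m59'} is proved identically, using $M^{-k}$, the classes $A_p^{-}(x_i)$ and Theorem~\ref{m8}(ii).

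For sharpness it is enough to treat $n=2$ and product weights $w(x_1,x_2)=w_1(x_1)w_2(x_2)$, for which $\|w\|_{A_p^{+}(x_i)}=\|w_i\|_{A_p^{+}(\Bbb{R})}$, $i=1,2$ (the one-sided analogue of \eqref{PR}, immediate from the definitions). Fix $0<\epsilon<1$ and take the tensor squares of the extremal pair from the sharpness part of Theorem~\ref{m8}(i): $w_i(x_i)=|1-x_i|^{(1-\epsilon)(p-1)}$ and $f(t_1,t_2)=\prod_{i=1}^{2}(1-t_i)^{\epsilon(p-1)-1}\chi_{(0,1)}(t_i)$. Then $\bigl(\|w\|_{A_p^{+}(x_1)}\|w\|_{A_p^{+}(x_2)}\bigr)^{1/(p-1)}\approx\epsilon^{-2}$, $f\in L^p_w(\Bbb{R}^2)$, and for $0<x_1,x_2<1$,
$$
\bigl(M^{+(s)}f\bigr)(x_1,x_2)\ge\frac{1}{(1-x_1)(1-x_2)}\int_{x_1}^{1}\!\!\int_{x_2}^{1}f(t_1,t_2)\,dt_1\,dt_2=c\,\epsilon^{-2}f(x_1,x_2),
$$
whence $\|M^{+(s)}f\|_{L^p_w}\ge c\,\epsilon^{-2}\|f\|_{L^p_w}$; letting $\epsilon\to0$ shows the exponent $1/(p-1)$ on the product cannot be decreased. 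The sharpness of \eqref{m59'} follows the same way with $w_i(x_i)=|x_i|^{(1-\epsilon)(p-1)}$ and $f(t_1,t_2)=\prod_{i=1}^{2}t_i^{\epsilon(p-1)-1}\chi_{(0,1)}(t_i)$, using Theorem~\ref{m8}(ii).

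The only step requiring genuine attention --- and it is a mild one --- is checking that Proposition~\ref{m12}, literally stated for the two-sided classes, survives the replacement of $A_p(x_k)$ by $A_p^{\pm}(x_k)$; this is clear since its proof invokes only Fubini's theorem and the bound \eqref{m13}, never any property specific to the two-sided condition. Everything else is a routine repetition of computations already carried out for $M^{+}$ and $M^{-}$ in the proof of Theorem~\ref{m8}.
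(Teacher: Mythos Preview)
Your proposal is correct and follows essentially the same approach as the paper: the upper bound is obtained by the pointwise domination $M^{+(s)}f\le M^{+1}\circ\cdots\circ M^{+n}f$ combined with Theorem~\ref{m8} and (the one-sided analogue of) Proposition~\ref{m12}, exactly as in the proof of Theorem~\ref{m14}, and the sharpness example is the same product weight $w(x_1,x_2)=|1-x_1|^{(1-\epsilon)(p-1)}|1-x_2|^{(1-\epsilon)(p-1)}$ and test function $f$ that the paper uses. Your explicit remark that Proposition~\ref{m12} carries over verbatim to the one-sided classes is a useful clarification the paper leaves implicit.
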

\begin{proof} We show (i). The proof of (ii) is similar.  Since the proof of inequality \eqref{m59} follows in the same way as in the case of $M^{(s)}$ (see Theorem \ref{m14}), we show only sharpness. Let $n=2$. We take $0<\epsilon<1$. Let $w(x_{1},x_{2})=|1-x_{1}|^{(1-\epsilon)(p-1)}|1-x_{2}|^{(1-\epsilon)(p-1)}$. Then it is easy to check that $$(\|w\|_{A_{p}^{+}(x_1)}\|w\|_{A_{p}^{+}(x_2)})^{1/(p-1)}\approx\frac{1}{\epsilon^2}.$$
Observe also that for
$$f(x_{1},x_{2})={(1-x_{1})}^{\epsilon(p-1)-1}\chi_{(0,1)}(x_{1}){(1-x_{2})}^{\epsilon(p-1)-1}\chi_{(0,1)}(x_{2}),$$
we have $\|f\|_{L^p_w}\approx\frac{1}{\epsilon^2}$.
Now let $0<x_{1},x_{2}<1$. Then
$$M^{+(s)}f(x_{1},x_{2})\ge\frac{1}{(1-x_{1})(1-x_{2})}\int\limits^{1}_{x_{1}}\int\limits^{1}_{x_{2}}f(t,\tau)dtd\tau=c\frac{1}{\epsilon^{2}}f(x_{1},x_{2}).$$
Finally
$$\|M^{+(s)}f\|_{L^{p}_w(\Bbb{R}^2)}\ge c\frac{1}{\epsilon^2}\|f\|_{L^{p}_{w}}.$$
Thus we have the sharpness in  \eqref{m59}.
\end{proof}

\subsection{One--sided  Multiple Fractional Integrals}

Now we discuss sharp bounds for one--sided strong maximal potential operators with product kernels.

Let $f$ be a locally integrable function on $\Bbb{R}^{n}$ and let $0<\alpha<1$. We define one--sided strong fractional maximal operators as
\begin{equation}\label{m56}
M^{+(s)}_{\alpha}f(x_{1},\cdots,x_{n})=\sup_{h_1,\cdots,h_n>0} \frac{1}{\prod\limits_{i=1}^{n}{h_{i}^{1-\alpha}}}\int\limits_{x_{1}}^{x_{1}+h_{1}}\cdots\int\limits_{x_{n}}^{x_{n}+h_{n}}|f(y_1,\cdots y_n)|dy_1\cdots dy_{n},
\end{equation}
\begin{equation}\label{m57}
M^{-(s)}_{\alpha}f(x_{1},\cdots,x_{n})=\sup_{h_1,\cdots,h_n>0} \frac{1}{\prod\limits_{i=1}^{n}{h_{i}^{1-\alpha}}}\int\limits_{x_{1}-h_{1}}^{x_{1}}\cdots\int\limits_{x_{n}-h_n}^{x_{n}}|f(y_1,\cdots y_n)|dy_1\cdots dy_{n}.
\end{equation}

Let $1<p\leq q<\infty$. We say that a weight function $w$ belongs to the class $A_{p,q}^{-(s)}(\Bbb{R}^{n})$ if
\begin{eqnarray*}
\|w\|_{A_{p,q}^{-(s)}(\Bbb{R}^{n})}&:=&\sup\limits_{\substack{h_1,\cdots,h_n>0\\x_{1},\cdots x_{n}\in\Bbb{R}}}\bigg(\frac{1}{h_{1}\cdots h_{n}}\int\limits_{x_{1}}^{x_{1}+h_{1}}\cdots\int\limits_{x_{n}}^{x_{n}+h_{n}}w^{q}(t_{1},\cdots,t_{n})dt_{1}\cdots dt_{n}\bigg)^{1/q}\\
&&\times\bigg(\frac{1}{h_{1}\cdots h_{n}}\int\limits^{x_{1}}_{x_{1}-h_{1}}\!\!\!\cdots\!\!\!\int\limits^{x_{n}}_{x_{n}-h_{n}}w^{-p'}(t_{1},\cdots,t_{n})dt_{1}\cdots dt_{n}\bigg)^{1/p'}<\infty;
\end{eqnarray*}
further, $w\in A_{p,q}^{+(s)}(\Bbb{R}^{n})$ if
\begin{eqnarray*}
\|w\|_{A_{p,q}^{+(s)}(\Bbb{R}^{n})}&:=&\sup\limits_{\substack{h_1,\cdots,h_n>0\\x_{1},\cdots x_{n}\in\Bbb{R}}}\bigg(\frac{1}{h_{1}\cdots h_{n}}\int\limits^{x_{1}}_{x_{1}-h_{1}}\cdots\int\limits^{x_{n}}_{x_{n}-h_{n}}w^q(t_{1},\cdots,t_{n})dt_{1}\cdots dt_{n}\bigg)^{1/q}\\
&&\times\bigg(\frac{1}{h_{1}\cdots h_{n}}\int\limits_{x_{1}}^{x_{1}+h_{1}}\!\!\!\cdots\!\!\!\int\limits_{x_{n}}^{x_{n}+h_{n}}w^{-p'}(t_{1},\cdots,t_{n})dt_{1}\cdots dt_{n}\bigg)^{1/p'}<\infty.
\end{eqnarray*}

\begin{defin}
Let $1<p\leq q<\infty$. A weight function $w=w(x_1,\cdots,x_n)$ defined on ${\Bbb{R}}^n$ is said to satisfy $A_{p,q}^{-}$ condition in $x_{i}$ uniformly with respect to other variables $(w\in A_{p,q}^{+}(x_i))$ if
\begin{eqnarray*}
\|w\|_{A^{+}_{p,q}(x_i)}:=  \esssup\limits_{\substack {(x_{1},\cdots x_{i-1},\\x_{i+1}\cdots,x_{n})\in{\Bbb{R}}^{n-1}}}\sup_{h_{i}>0} \bigg(\frac{1}{h_{i}}\int\limits_{x_{i}}^{x_{i}+h_i}w^{q}(x_1,\cdots, x_{i-1}, t, x_{i+1}  \cdots, x_n)dt\bigg)^{1/q}\times
\end{eqnarray*}

$$\indent\indent\times\bigg(\frac{1}{h_{i}}\int\limits_{x_{i}-h_i}^{x_i}w^{-p'}(x_1,\cdots, x_{i-1}, t, x_{i+1} \cdots, x_n) dt \bigg)^{1/p'}<\infty,$$
further, $w\in A_{p,q}^{-}(x_i)$ if
\begin{eqnarray*}
\|w\|_{A^{-}_{p,q}(x_i)}\equiv \sup\limits_{\substack {(x_{1},\cdots x_{i-1},\\x_{i+1}\cdots,x_{n})\in{\Bbb{R}}^{n-1}}}\sup_{h_{i}>0} \bigg(\frac{1}{h_{i}}\int\limits_{x_{i}-h_{i}}^{x_{i}}w^{q}(x_1,\cdots, x_{i-1}, t, x_{i+1} \cdots, x_n)dt\bigg)^{1/q}\times
\end{eqnarray*}

$$\indent\indent\times\bigg(\frac{1}{h_{i}}\int\limits_{x_{i}}^{x_i+h_i}w^{-p'} (x_1,\cdots, x_{i-1}, t, x_{i+1} \cdots, x_n)dt\bigg)^{1/p'}<\infty.$$
\end{defin}

\begin{rem}  It is easy to check that $w(x_1,\cdots,x_n)\in A_{p,q}^{\pm(s)}\Leftrightarrow w\in \bigcap\limits_{i=1}^{n}A_{p,q}^{\pm}(x_i)$.
\end{rem}

\begin{thm}\label{m62}
Let $0<\alpha<1$, $1<p<1/\alpha$. We put $q=\frac{p}{1-\alpha p}$. Suppose that $w$ is a weight function defined on $\Bbb{R}^{n}$ such that $w\in A^{+(s)}_{p,q}({\Bbb{R}}^n)$. Then there exists a constant $c$ depending only on $n$, $p$ and $\alpha$ such that the following inequality
\begin{equation}\label{m63a}
\|wM_{\alpha}^{+(s)}f\|_{L^{q}({\Bbb{R}}^n)}\le c\big (\|w\|_{A_{p,q}^{+}(x_{1})}\cdots\|w\|_{A_{p,q}^{+}(x_{n})}\big)^{\frac{p'}{q}(1-\alpha)}\|wf\|_{L^{p}({\Bbb{R}}^n)}
\end{equation}
holds for all $f\in L^{p}_{w^p}({\Bbb{R}}^n)$ . Further, the power ${\frac{p'}{q}(1-\alpha)}$ in estimate \eqref{m63a} is sharp.
\end{thm}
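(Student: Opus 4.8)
The plan is to reduce the $n$-dimensional one--sided strong estimate to the sharp one--dimensional bound of Theorem~\ref{m10}(i), exploiting the product structure exactly as in the proofs of Theorems~\ref{m14}, \ref{m15}, and \ref{m51a}. The first step is the pointwise domination, valid for $f\ge 0$,
$$M_{\alpha}^{+(s)}f\le\big(M_{\alpha}^{+,1}\circ M_{\alpha}^{+,2}\circ\cdots\circ M_{\alpha}^{+,n}\big)f,$$
where $M_{\alpha}^{+,k}$ denotes the one--sided fractional maximal operator $M_{\alpha}^{+}$ applied in the $k$-th variable with the remaining variables frozen. This is the one--sided analogue of \eqref{m71}: since the normalizing factor $(h_1\cdots h_n)^{1-\alpha}$ splits as $h_1^{1-\alpha}\cdots h_n^{1-\alpha}$, for fixed $h_1,\dots,h_n$ the forward average of $f$ over $\prod_i[x_i,x_i+h_i]$ is majorized by $M_{\alpha}^{+,1}$ applied to the iterated forward averages in the remaining variables; iterating and taking the supremum over $h_1,\dots,h_n>0$ gives the inequality.

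Next I would establish the one--sided counterpart of Proposition~\ref{m49}: if $S_1,\dots,S_n$ are operators with $S_k$ acting only on the $k$-th variable and $\|S_k\|_{L^p_{w^p}(\Bbb{R})\to L^q_{w^q}(\Bbb{R})}\le c\|w\|_{A_{p,q}^{+}(x_k)}^{\gamma(p,q)}$ for every $k$, then
$$\|S_1\circ\cdots\circ S_n\|_{L^p_{w^p}(\Bbb{R}^n)\to L^q_{w^q}(\Bbb{R}^n)}\le c\prod_{k=1}^{n}\|w\|_{A_{p,q}^{+}(x_k)}^{\gamma(p,q)}.$$
Applied to the composition above with $S_k=M_{\alpha}^{+,k}$, together with Theorem~\ref{m10}(i) (which supplies the one--dimensional bound with $\gamma(p,q)=\frac{p'}{q}(1-\alpha)$, noting that the one--dimensional $A_{p,q}^{+}(\Bbb{R})$ characteristic of each slice $w(\cdots,x_k,\cdots)$ is $\le\|w\|_{A_{p,q}^{+}(x_k)}$), this yields \eqref{m63a}. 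The iteration proceeds by induction on $n$: at each stage one applies Theorem~\ref{m10}(i) in the leading variable with the others frozen, uses the slice bound just mentioned, and reduces to the $(n-1)$-dimensional estimate for the remaining composition. The only point that is not a verbatim repetition of the $L^p\to L^p$ argument of Proposition~\ref{m12} is that here $q=\frac{p}{1-\alpha p}>p$, so after controlling the leading variable in $L^q$ one is left with an $L^q$-norm in that variable nested inside an $L^p$-norm in the remaining ones; the interchange of these norms is effected by Minkowski's integral inequality, which is legitimate precisely because $q/p>1$. I expect this interchange to be the main (and essentially only) technical obstacle, though it is a routine manoeuvre for operators with product kernels.

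For sharpness I would run the example from the proof of Theorem~\ref{m51a}, anchored at the corner $(1,\dots,1)$ so as to exhibit the forward averaging of $M_{\alpha}^{+(s)}$. Fix $0<\epsilon<1$ and set $w(x)=\prod_{i=1}^{n}|1-x_i|^{(1-\epsilon)/p'}$ and $f(t)=\prod_{i=1}^{n}(1-t_i)^{\epsilon-1}\chi_{(0,1)}(t_i)$. By the product structure of the characteristics (cf. \eqref{PR}) and the one--dimensional computation in the proof of Theorem~\ref{m10}(i), one has $\prod_{i=1}^{n}\|w\|_{A_{p,q}^{+}(x_i)}\approx\epsilon^{-nq/p'}$ and $\|wf\|_{L^{p}(\Bbb{R}^n)}\approx\epsilon^{-n/p}$. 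For $0<x_i<1$, the choice $h_i=1-x_i$ gives
$$M_{\alpha}^{+(s)}f(x)\ge\Bigg(\prod_{i=1}^{n}(1-x_i)^{\alpha-1}\Bigg)\int_{x_1}^{1}\!\cdots\!\int_{x_n}^{1}f(t)\,dt=c\,\epsilon^{-n}\prod_{i=1}^{n}(1-x_i)^{\epsilon-1+\alpha},$$
and, since $1/q=1/p-\alpha$, a direct computation yields $\|wM_{\alpha}^{+(s)}f\|_{L^{q}(\Bbb{R}^n)}\ge c\,\epsilon^{-n-n/q}$. Comparing the powers of $\epsilon$ on the two sides of \eqref{m63a} with a hypothetical exponent $\beta$ in place of $\frac{p'}{q}(1-\alpha)$ and letting $\epsilon\to 0$ forces $\beta\ge\frac{p'}{q}(1-\alpha)$; hence the exponent in \eqref{m63a} is best possible.
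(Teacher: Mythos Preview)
Your proposal is correct and follows essentially the same route as the paper: pointwise domination of $M_{\alpha}^{+(s)}$ by the composition $M_{\alpha}^{+,1}\circ\cdots\circ M_{\alpha}^{+,n}$, the one--sided analogue of Proposition~\ref{m49} combined with Theorem~\ref{m10}(i), and the product power--weight example anchored at $(1,\dots,1)$ for sharpness. You are in fact more explicit than the paper about the Minkowski step needed to iterate when $q>p$ (the paper omits the proof of Proposition~\ref{m49} and simply says the estimate ``follows in the same way as in the previous cases''); just be careful that after applying the one--dimensional bound in the leading variable $x_1$ one is left with an $L^q_{(x_2,\dots,x_n)}(L^p_{x_1})$ norm, and Minkowski (valid since $p\le q$) moves the $L^p_{x_1}$ to the outside, not the other way around.
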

\begin{proof}
Estimate \eqref{m63a} follows in the same way as in the previous cases.
For sharpness we take  $n=2$ and $w(x_{1},x_{2})=w_1(x_1) w_2(x_2)$, where $w_1(x_1)= |1-x_{1}|^{(1-\epsilon)p'}$; $w_2(x_2)= |1-x_{2}|^{(1-\epsilon)p'}$,   $0<\epsilon<1$. Then
$$ \|w\|_{A^+_{p,q}(x_1)} \|w\|_{A^+_{p,q}(x_2)} = \prod\limits_{i=1}^2\|w_i\|_{A^+_{p,q}({\Bbb{R}})} =
\prod\limits_{i=1}^{2}\|w_i^q\|_{A^+_{1+q/p'}({\Bbb{R}})} \approx \varepsilon^{2q/p'}.$$

If $$f(t_{1},t_{2})=(1-t_{1})^{\epsilon-1}\chi_{(0,1)}(t_{1})(1-t_{2})^{\epsilon-1}\chi_{(0,1)}(t_{2}),$$
then $\|wf\|_{L^p({\Bbb{R}})^2}\approx \frac{1}{\epsilon^{2/p}}$.
Now let $0<x<1$. Then we find that the following estimate
$$M^{+(s)}_{\alpha}f(x_{1},x_{2})\ge\frac{1}{\prod\limits_{i=1}^{2}|1-x_{i}|^{1-\alpha}}\int\limits^{1}_{x_{1}}\int\limits^{1}_{x_{2}}f(t_{1},t_{2})dt_{1}dt_{2}
\approx \frac{\prod\limits_{i=1}^{2}|1-x_{i}|^{\epsilon-1+\alpha}}{\epsilon^{2}}$$
holds. Finally
\begin{equation}\label{m63}
\|wM^{+(s)}_{\alpha}f\|_{L^{q}({\Bbb{R}}^2)}\ge \epsilon^{-2-2/q}.
\end{equation}
Thus, letting $\epsilon\rightarrow0$ we are done.
\end{proof}

The next statement can be proved analogously. Details are omitted.
\begin{thm}\label{m63b}
Let  $\alpha$, $p$ and $q$satisfy the condition of Theorem \ref{m62}. Let $w$ be a weight function on $\Bbb{R}^{n}$ such that $w\in{A^{-(s)}_{p,q}(\Bbb{R}^{n})}$. Then there exist a constant $c$ depending only on $n$, $p$ and $\alpha$ such that the following inequality
\begin{equation}\label{m64}
\|wM_{\alpha}^{-(s)}f\|_{L^{q}(\Bbb{R}^{n})}\le c(\|w\|_{A_{p,q}^{-}(x_{1})}\cdots\|w\|_{A_{p,q}^{+}(x_{n})})^{\frac{p'}{q}(1-\alpha)}\|wf\|_{L^{p}(\Bbb{R}^{n})}
\end{equation}
holds for all $f\in L^{p}_{w^p}(\Bbb{R}^{n})$ . Further, the power ${\frac{p'}{q}(1-\alpha)}$ in estimate \eqref{m64} is sharp.
\end{thm}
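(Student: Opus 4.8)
The plan is to mirror, in its left--handed form, the proof of Theorem~\ref{m62}, so I shall only sketch the steps. For the upper bound the starting point is the pointwise factorization, analogous to \eqref{m71},
\[
M_{\alpha}^{-(s)}f(x)\le\big(M_{\alpha}^{-,1}\circ M_{\alpha}^{-,2}\circ\cdots\circ M_{\alpha}^{-,n}\big)f(x),\qquad x\in\Bbb{R}^{n},
\]
where $M_{\alpha}^{-,k}$ denotes the one--dimensional operator $M_{\alpha}^{-}$ applied in the $k$-th variable with the remaining variables frozen; it follows by bounding, for fixed $h_{1},\dots,h_{n}$, the innermost integral in the $k$-th variable over $(x_{k}-h_{k},x_{k})$ by $h_{k}^{1-\alpha}M_{\alpha}^{-,k}f$ and iterating over $k=n,n-1,\dots,1$. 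Since $q=\frac{p}{1-\alpha p}$ forces $1<p\le q<\infty$, Theorem~\ref{m10}(ii) supplies, for any weight $v$ on $\Bbb{R}$,
\[
\|M_{\alpha}^{-}\|_{L^{p}_{v^{p}}(\Bbb{R})\to L^{q}_{v^{q}}(\Bbb{R})}\le c(p,\alpha)\,\|v\|_{A_{p,q}^{-}(\Bbb{R})}^{\frac{p'}{q}(1-\alpha)},
\]
so the relevant one--variable exponent is $\gamma(p,q)=\frac{p'}{q}(1-\alpha)$. As freezing all variables but $x_{k}$ turns $M_{\alpha}^{-,k}$ literally into $M_{\alpha}^{-}$ in $x_{k}$ with characteristic $\|w\|_{A_{p,q}^{-}(x_{k})}$, and $w\in A_{p,q}^{-(s)}(\Bbb{R}^{n})\Leftrightarrow w\in\bigcap_{i}A_{p,q}^{-}(x_{i})$, I would then apply the one--sided version of the Fubini iteration of Proposition~\ref{m49} (with $A_{p,q}^{(s)}$ and $A_{p,q}(x_{k})$ replaced throughout by $A_{p,q}^{-(s)}$ and $A_{p,q}^{-}(x_{k})$) to obtain \eqref{m64}.

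For sharpness it suffices to treat $n=2$. Fix $0<\epsilon<1$ and take the product weight $w(x_{1},x_{2})=w_{1}(x_{1})w_{2}(x_{2})$ with $w_{i}(t)=|t|^{(1-\epsilon)/p'}$, so that, exactly as in the proof of Theorem~\ref{m10}(ii),
\[
\|w\|_{A_{p,q}^{-}(x_{i})}=\|w_{i}\|_{A_{p,q}^{-}(\Bbb{R})}=\|w_{i}^{q}\|_{A_{1+q/p'}^{-}(\Bbb{R})}\approx\epsilon^{-q/p'},\qquad i=1,2,
\]
and hence $\big(\|w\|_{A_{p,q}^{-}(x_{1})}\|w\|_{A_{p,q}^{-}(x_{2})}\big)^{\frac{p'}{q}(1-\alpha)}\approx\epsilon^{-2(1-\alpha)}$. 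Testing against $f(t_{1},t_{2})=|t_{1}|^{\epsilon-1}\chi_{(0,1)}(t_{1})\,|t_{2}|^{\epsilon-1}\chi_{(0,1)}(t_{2})$, for which $\|wf\|_{L^{p}(\Bbb{R}^{2})}\approx\epsilon^{-2/p}$, one has, for $0<x_{1},x_{2}<1$ and the choice $h_{i}=x_{i}$ in the defining supremum,
\[
M_{\alpha}^{-(s)}f(x_{1},x_{2})\ge\frac{1}{|x_{1}|^{1-\alpha}|x_{2}|^{1-\alpha}}\int_{0}^{x_{1}}\!\!\int_{0}^{x_{2}}f(t_{1},t_{2})\,dt_{1}\,dt_{2}\approx\frac{|x_{1}|^{\epsilon-1+\alpha}|x_{2}|^{\epsilon-1+\alpha}}{\epsilon^{2}},
\]
which yields $\|wM_{\alpha}^{-(s)}f\|_{L^{q}(\Bbb{R}^{2})}\ge c\,\epsilon^{-2-2/q}$. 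Since $\frac1p-\frac1q=\alpha$, the quotient $\epsilon^{-2-2/q}/\epsilon^{-2/p}$ is $\epsilon^{-2(1-\alpha)}$, so
\[
\|wM_{\alpha}^{-(s)}f\|_{L^{q}(\Bbb{R}^{2})}\ge c\,\epsilon^{-2(1-\alpha)}\|wf\|_{L^{p}(\Bbb{R}^{2})}\approx c\big(\|w\|_{A_{p,q}^{-}(x_{1})}\|w\|_{A_{p,q}^{-}(x_{2})}\big)^{\frac{p'}{q}(1-\alpha)}\|wf\|_{L^{p}(\Bbb{R}^{2})},
\]
and letting $\epsilon\to0$ shows the exponent $\frac{p'}{q}(1-\alpha)$ in \eqref{m64} cannot be lowered.

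The evaluations of the $A_{p,q}^{-}$-characteristics of the power weights, of $\|wf\|_{L^{p}}$, and of the pointwise lower bound for $M_{\alpha}^{-(s)}f$ are just the two--fold tensor products of the one--variable computations in the proof of Theorem~\ref{m10}(ii), hence routine. The one point genuinely needing attention is the one--sided form of Proposition~\ref{m49}: one must check that its Fubini argument goes through verbatim once the two-sided classes there are replaced by $A_{p,q}^{\pm(s)}$ and $A_{p,q}^{\pm}(x_{k})$ and the one-dimensional input is the one--sided bound of Theorem~\ref{m10}(ii). I expect this to be the place requiring the most care, although no new idea is involved, since freezing all variables but $x_{k}$ reduces $M_{\alpha}^{-(s)}$ exactly to $M_{\alpha}^{-}$ acting in $x_{k}$ with characteristic $\|w\|_{A_{p,q}^{-}(x_{k})}$. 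Accordingly the details may reasonably be omitted, as the statement asserts.
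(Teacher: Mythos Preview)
Your proposal is correct and follows essentially the same route the paper intends: the upper bound via the pointwise factorization of $M_{\alpha}^{-(s)}$ into iterated one--variable $M_{\alpha}^{-}$'s, Theorem~\ref{m10}(ii), and the (one--sided) Fubini iteration of Proposition~\ref{m49}; and sharpness via the tensor product of the one--variable power--weight example from the proof of Theorem~\ref{m10}(ii), which is exactly the ``$-$'' analog of the example in Theorem~\ref{m62}. The paper itself omits the details and simply refers to the proof of Theorem~\ref{m62}, so your write-up is in fact more explicit than what appears there.
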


Let $f$ be a measurable function on $\Bbb{R}^{n}$ and let $0<\alpha<1$. We define one--sided potentials $\mathcal{R}_{\alpha}^{(n)}$ and $\mathcal{W}_{\alpha}^{(n)}$ with product kernels
$$\mathcal{R}_{\alpha}^{(n)}f(x_{1},\cdots,x_n)=\int\limits_{-\infty}^{x_1}\cdots\int\limits_{-\infty}^{x_n}\frac{f(t_{1},\cdots,t_n)}{(x_{1}-t_{1})
^{1-\alpha}\cdots(x_{n}-t_{n})^{1-\alpha}}dt_1\cdots dt_n,$$
$$\mathcal{W}_{\alpha}^{(n)}f(x_{1},\cdots,x_n)=\int\limits^{\infty}_{x_1}\cdots\int\limits^{\infty}_{x_n}\frac{f(t_{1},\cdots,t_n)}{(t_{1}-x_{1})
^{1-\alpha}\cdots(t_{n}-x_{n})^{1-\alpha}}dt_1\cdots dt_n,$$
where $x_i\in \Bbb{R}$, $i=1,\cdots, n$.
\vskip+0.2cm

Finally we formulate the ''sharp result'' for one--sided potentials with product kernels. We do not repeat the arguments using above, and therefore omit the proof of the next statement.

\begin{thm}\label{m74}
Let $\alpha$, $p$ and $q$ satisfy the conditions of Theorem \ref{m62}. Suppose that $w$ be a weight function on $\Bbb{R}^{n}$ such that $w\in{A^{-(s)}_{p,q}({\Bbb{R}}^n)}$. Then

\rm{(i)} there exists a constant $c$ depending only on $n$, $p$ and $\alpha$ such that the following inequality
\begin{equation}\label{m75}
\|w\mathcal{R}^{(n)}_{\alpha}f\|_{L^{q}({\Bbb{R}}^n)}\le c\Bigg(\prod\limits_{i=1}^{n}\|w\|_{A_{p,q}^{-}(x_{i})}\Bigg)^{\max\{1,\frac{p'}{q}\}(1-\alpha)}\|wf\|_{L^{p}({\Bbb{R}}^n)}
\end{equation}
holds for all $f\in L^{p}_{w^p}({\Bbb{R}}^n)$ . Further, the power ${\max\{1,\frac{p'}{q}\}(1-\alpha)}$ in estimate \eqref{m75} is sharp.

\rm{(ii)} There is a constant $c$ depending only on $n$, $p$ and $\alpha$ such that
\begin{equation}\label{m76}
\|w\mathcal{W}^{(n)}_{\alpha}f\|_{L^{q}({\Bbb{R}}^n)}\le c\Bigg(\prod\limits_{i=1}^{n}\|w\|_{A_{p,q}^{+}(x_{i})}\Bigg)^{\max\{1,\frac{p'}{q}\}(1-\alpha)}\|wf\|_{L^{p}({\Bbb{R}}^n)}
\end{equation}
for all $f\in L^{p}_{w^p}({\Bbb{R}}^n)$. Further, the power ${\max\{1,\frac{p'}{q}\}(1-\alpha)}$ in estimate \eqref{m76} is sharp.
\rm{(ii)} There is a constant $c$ depending only on $n$, $p$ and $\alpha$ such that
\begin{equation}\label{m76}
\|w\mathcal{W}^{(n)}_{\alpha}f\|_{L^{q}({\Bbb{R}}^n)}\le c\Bigg(\prod\limits_{i=1}^{n}\|w\|_{A_{p,q}^{+}(x_{i})}\Bigg)^{\max\{1,\frac{p'}{q}\}(1-\alpha)}\|wf\|_{L^{p}({\Bbb{R}}^n)}
\end{equation}
for all $f\in L^{p}_{w^p}({\Bbb{R}}^n)$. Further, the power ${\max\{1,\frac{p'}{q}\}(1-\alpha)}$ in estimate \eqref{m76} is sharp.
\end{thm}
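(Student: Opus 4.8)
The plan is to reduce the multiple estimates \eqref{m75}--\eqref{m76} to the sharp one-dimensional bounds already proved in Theorem \ref{m32}, exploiting the product structure of the kernels. The first step is to record the pointwise factorizations
$$\mathcal{R}^{(n)}_{\alpha}f=\big(\mathcal{R}^{1}_{\alpha}\circ\cdots\circ\mathcal{R}^{n}_{\alpha}\big)f,\qquad \mathcal{W}^{(n)}_{\alpha}f=\big(\mathcal{W}^{1}_{\alpha}\circ\cdots\circ\mathcal{W}^{n}_{\alpha}\big)f,$$
which follow from Fubini's theorem, where $\mathcal{R}^{k}_{\alpha}$ (resp. $\mathcal{W}^{k}_{\alpha}$) is the one-dimensional operator $\mathcal{R}_{\alpha}$ (resp. $\mathcal{W}_{\alpha}$) acting on the $k$-th variable with the remaining variables frozen. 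By Theorem \ref{m32}, for a weight $u$ on $\Bbb{R}$ one has $\|\mathcal{R}_{\alpha}\|_{L^{p}_{u^{p}}(\Bbb{R})\to L^{q}_{u^{q}}(\Bbb{R})}\le c\,\|u\|_{A^{-}_{p,q}(\Bbb{R})}^{(1-\alpha)\max\{1,p'/q\}}$; applying this uniformly in the frozen variables gives
$$\|\mathcal{R}^{k}_{\alpha}\|_{L^{p}_{w^{p}}(\Bbb{R})\to L^{q}_{w^{q}}(\Bbb{R})}\le c\,\|w\|_{A^{-}_{p,q}(x_{k})}^{(1-\alpha)\max\{1,p'/q\}},\qquad k=1,\dots,n,$$
and, symmetrically, $\|\mathcal{W}^{k}_{\alpha}\|_{L^{p}_{w^{p}}\to L^{q}_{w^{q}}}\le c\,\|w\|_{A^{+}_{p,q}(x_{k})}^{(1-\alpha)\max\{1,p'/q\}}$.

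Next I would run the iteration behind Proposition \ref{m49} in its one-sided form, obtained by replacing $A_{p,q}(x_i)$ with $A^{\pm}_{p,q}(x_i)$: expanding $\|\mathcal{R}^{(n)}_{\alpha}f\|_{L^{q}_{w^{q}}}$ via the factorization, one integrates first in the $x_1$-variable and uses the bound for $\mathcal{R}^{1}_{\alpha}$ to pull out the factor $\|w\|_{A^{-}_{p,q}(x_{1})}^{\gamma}$ with $\gamma=(1-\alpha)\max\{1,p'/q\}$, then applies Minkowski's integral inequality---legitimate because $q\ge p$---to interchange the inner $L^{p}_{x_1}$ and $L^{q}_{x_2}$ norms, and repeats on the remaining variables. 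Since the proof of Proposition \ref{m49} was only sketched, I would spell out this Minkowski step once for $n=2$ and note the obvious induction; the outcome is \eqref{m75} for $\mathcal{R}^{(n)}_{\alpha}$, and the identical argument with the $+$ classes yields \eqref{m76} for $\mathcal{W}^{(n)}_{\alpha}$. The bookkeeping here---making sure that each one-dimensional application of Theorem \ref{m32} contributes exactly one factor $\|w\|_{A^{\pm}_{p,q}(x_i)}^{\gamma}$ and that the order of the norm interchanges is the correct one---is the only genuinely delicate point; everything else is tensorization of known facts.

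For sharpness I would tensor the one-dimensional extremizers of Theorem \ref{m32}. Fix $0<\epsilon<1$ and take $w(x_1,\dots,x_n)=\prod_{i=1}^{n}w_i(x_i)$ and $f(x_1,\dots,x_n)=\prod_{i=1}^{n}f_i(x_i)$, where each $w_i$ and $f_i$ is a copy of the extremal weight and function from the sharpness part of Theorem \ref{m32}. For product weights one checks, exactly as in \eqref{PR}, that $\|w\|_{A^{-}_{p,q}(x_i)}=\|w_i\|_{A^{-}_{p,q}(\Bbb{R})}$, so $\prod_{i}\|w\|_{A^{-}_{p,q}(x_i)}\approx\epsilon^{-nq/p'}$ while $\|wf\|_{L^{p}(\Bbb{R}^{n})}\approx\epsilon^{-n/p}$; and by the factorization $\mathcal{R}^{(n)}_{\alpha}f=\prod_{i}\mathcal{R}_{\alpha}f_i$, so the one-dimensional lower bound multiplies across the variables and gives $\|w\mathcal{R}^{(n)}_{\alpha}f\|_{L^{q}(\Bbb{R}^{n})}\gtrsim\epsilon^{-n(1+1/q)}$ in the range $p'/q\ge 1$. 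Letting $\epsilon\to 0$ and comparing exponents shows the power $\max\{1,p'/q\}(1-\alpha)$ cannot be lowered there. The complementary range $p'/q<1$ follows by duality: $\mathcal{R}^{(n)}_{\alpha}$ and $\mathcal{W}^{(n)}_{\alpha}$ are mutually adjoint, being products of mutually adjoint one-dimensional operators, so $\|\mathcal{R}^{(n)}_{\alpha}\|_{L^{p}_{w^{p}}\to L^{q}_{w^{q}}}=\|\mathcal{W}^{(n)}_{\alpha}\|_{L^{q'}_{w^{-q'}}\to L^{p'}_{w^{-p'}}}$, which reduces to the case already treated (as in the proof of Theorem \ref{m32}); the argument for \eqref{m76} is symmetric. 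The main obstacle throughout is thus the mixed-norm iteration of the second paragraph; the rest is a routine tensor product of the one-variable theory.
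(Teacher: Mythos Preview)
Your proposal is correct and follows exactly the approach the paper intends: the paper explicitly omits the proof of Theorem \ref{m74}, stating only that ``we do not repeat the arguments using above,'' so your outline---factorize $\mathcal{R}^{(n)}_{\alpha}$ and $\mathcal{W}^{(n)}_{\alpha}$ as compositions of one-variable operators, feed the sharp one-dimensional bounds of Theorem \ref{m32} into (the one-sided analogue of) Proposition \ref{m49}, and tensor the extremizers for sharpness with duality handling the case $p'/q<1$---is precisely what those earlier arguments amount to. Your observation that the iteration in Proposition \ref{m49} requires Minkowski's integral inequality (since $q\ge p$, unlike the $p=q$ case in Proposition \ref{m12}) is a detail the paper glosses over but is indeed the correct justification.
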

\subsection*{Acknowledgements}
The first and second authors were partially supported by  the Shota Rustaveli National
Science Foundation Grant (Contract Numbers:  D/13-23 and 31/47). The  third author is  thankful to the Higher Education Commission, Pakistan for the financial support.

\end{document}